\def\qed{\hfill $\square$}
\newcommand{\R}{\mathbb{R}}
\newcommand{\N}{\mathbb{N}}
\newcommand{\un}{\mathbf{1}\!\!{\rm I}} 
\newcommand{\be}{\begin{equation}} 
\newcommand{\ee}{\end{equation}}
\newcommand{\bea}{\begin{eqnarray}} 
\newcommand{\eea}{\end{eqnarray}}
\newcommand{\bean}{\begin{eqnarray*}} 
\newcommand{\eean}{\end{eqnarray*}}
\newcommand{\rf}[1]{(\ref {#1})}
\def\dx{\,{\rm d}x}
\def\dy{\,{\rm d}y}
\def\dt{\,{\rm d}t}
\def\dr{\,{\rm d}r}
\def\dta{\,{\rm d}\tau}
\def\ds{\,{\rm d}s}
\def\dsi{\,{\rm d}\sigma}
\def\e{\varepsilon}
\def\eps{\epsilon}
\def\ei{{\rm e}_1}
\def\s{\sigma}
\def\p{\partial}
\def\g{\gamma}
\def\r{\varrho}
\def\xn{|\!|\!|}
\def\wu{\widetilde u}
\def\mn{|\!\!|}
\def\mnp{|\!\!|_{M^{d/\alpha}}}
\def\fie{\phi_\varepsilon}
\newtheorem{theorem}{Theorem}
\newtheorem{proposition}[theorem]{Proposition}
\newtheorem{lemma}[theorem]{Lemma}
\theoremstyle{definition} 
\theoremstyle{remark}
\newtheorem{remark}[theorem]{Remark}
\numberwithin{equation}{section}
\numberwithin{theorem}{section}
\author[P. Biler]{Piotr Biler}
\address[P. Biler]{Instytut Matematyczny, Uniwersytet Wroc\l awski,
 pl. Grunwaldzki 2/4, \hbox{50-384} Wroc\-\l aw, Poland}
\email{Piotr.Biler@math.uni.wroc.pl}
\author[G. Karch]{Grzegorz Karch}
\address[G. Karch]{
 Instytut Matematyczny, Uniwersytet Wroc\l awski,
 pl. Grunwaldzki 2/4, \hbox{50-384} Wroc\-\l aw, Poland}
\email{Grzegorz.Karch@math.uni.wroc.pl}
\author[J. Zienkiewicz]{Jacek Zienkiewicz}
\address[J. Zienkiewicz]{
 Instytut Matematyczny, Uniwersytet Wroc\l awski,
 pl. Grunwaldzki 2/4, \hbox{50-384} Wroc\-\l aw, Poland}
\email{Jacek.Zienkiewicz@math.uni.wroc.pl}
\title[Nonlocal model of  chemotaxis]{ Large global-in-time solutions \\ to a nonlocal model of  chemotaxis}
\begin{document}

\begin{abstract} 
We consider   the parabolic-elliptic model for the  chemotaxis with   fractional (anomalous) diffusion. Global-in-time solutions are constructed under (nearly) optimal assumptions on the size of radial initial data. 
Moreover, criteria for blowup of radial solutions in terms of suitable Morrey spaces norms are derived.   
\end{abstract}

\keywords{model of chemotaxis; fractional Laplacian;  global existence of solutions; blowup of solutions.}

\subjclass[2010]{35Q92, 35B44, 35K55, 35A01}

\date{\today}

\thanks{  
The   authors were supported by the NCN grant  {
2013/09/B/ST1/04412}. 
 }

\maketitle

\baselineskip=19pt


\section{Introduction}
\subsection*{Formulation of the problem}
We consider in this paper the following version of the parabolic-elliptic  Keller--Segel model of chemotaxis in $d\ge 2$ space dimensions
\bea
u_t+(-\Delta)^{\alpha/2}u+\nabla\cdot(u\nabla v)&=&0,\ \ x\in {\mathbb R}^d,\ t>0,\label{equ}\\ 
\Delta v+u &=& 0,\ \   x\in {\mathbb R}^d,\ t>0,\label{eqv}
\eea
supplemented with the nonnegative initial condition 
\be
u(x,0)=u_0(x)\label{ini}.
\ee
Here the unknown variables $u=u(x,t)$ and $v=v(x,t)$ correspond to the density of the population of microorganisms ({\em e.g.} swimming bacteria or slime mold)  and the density of the chemical secreted by themselves that attracts them and makes them to aggregate. In this work, a diffusion process described by model \eqref{equ}--\eqref{ini} is given by the  fractional power of the Laplacian $(-\Delta)^{\alpha/2}$ with  $\alpha\in(0,2)$ which  is a pseudodifferential operator with a symbol $|\xi|^\alpha$, see  {\em e.g.} \cite{J1} for a comprehensive treatment of nonlocal diffusion operators. 
In case of sufficiently regular functions, we also have the following well-known representation of the fractional Laplacian with $\alpha\in(0,2)$ 
\be
-(-\Delta)^{\alpha/2}\omega(x)={\mathcal A} 
\lim_{\delta\searrow0}\int_{\{|y|>\delta\}}\frac{\omega(x-y)-\omega(x)}{|y|^{d+\alpha}}\dy,\label{fr-lap}
\ee   
where, by {\it e.g.} \cite[Th. 1]{DI}, \cite{Kw}, 
\be
{\mathcal A}={\mathcal A}({d,\alpha})=\frac{2^\alpha\Gamma\left(\frac{d+\alpha}{2}\right)}{\pi^{d/2}\left|\Gamma\left(-\frac{\alpha}{2}\right)\right|}.\label{stala-A}
\ee 

The initial datum in \eqref{ini} is a  nonnegative function $u_0\in L^1(\mathbb R^d)$ of the total mass $M=\int_{\R^d} u_0(x)\dx$ which is conserved during the evolution of (suitably regular) solutions 
\be 
M=\int_{\R^d} u(x,t)\dx \qquad \text{for all} \quad t\in [0, T).\label{M}
\ee  
Note, however, that  a natural scaling for  system \rf{equ}--\rf{eqv} 
\begin{equation}\label{scal:0}
u_\lambda(x,t)=\lambda^\alpha u(\lambda x,\lambda^\alpha t)\ \ {\rm for\ each\ \ }\lambda>0,
\end{equation}
leads to the equality $\int_{\R^d} u_\lambda(x,t) \dx = \lambda^{\alpha-d} \int_{\R^d} u(x,t) \dx$, {\it i.e.} for $\alpha\neq d$, the total   mass  of a rescaled solution $u_\lambda$ can be chosen arbitrarily  with   suitable $\lambda>0$.

\subsection*{The $8\pi$-problem in the classical case} 
Let us now describe previous results which motivated us to write this work. 
Since there is already a huge amount of literature on different models of chemotaxis, we are going to limit ourselves to those publications, which are directly related to this paper. 
We begin with the  classical case of $\alpha=2$ and $d=2$ where mass $M=8\pi$ plays a crucial role.    
Namely, if $u_0$ is a nonnegative measure of mass $M<8\pi$, then there exists a unique solution which is global-in-time, see {\em e.g.}  \cite{BM,BDP,BZ}.  
These results have been known previously for radially symmetric initial data, see  \cite{BKLN1,BKLN2,B-BCP,BKZ} for recent presentations. 
On the other hand, if $M>8\pi$, then this solution cannot be continued to a global-in-time regular one, and a finite time blowup occurs, {\em cf.} \cite{B-CM3,N1,K-O}, and \cite{BHN,B-BCP} for radially symmetric case. The radial blowup is accompanied by the concentration of mass  equal to $8\pi$ at the origin. 
In the general case, this concentration phenomenon occurs with a quantization of mass equal to $8k\pi$, $k\in\mathbb N$, {\em cf.} \cite[Ch. 15]{Suzuki}.

\subsection*{Parabolic-elliptic model in higher dimensions}
Now, we discuss the case of $\alpha=2$ and $d\geq 3$ in the model  \eqref{equ}--\eqref{ini}.   
It is well-known that problem \rf{equ}--\rf{ini} with $\alpha=2$ has a unique local-in-time mild solution $u\in {\mathcal C}([0,T); L^p(\R^d))$ 
for every $u_0\in L^p(\R^d)$ with $p>d/2$, see \cite{B-SM,K-JMAA,KS-IMUJ}. For solvability results in other functional spaces, see also \cite{BB-SM,B-W,K-JMAA,Lem,SYK}. In particular,   previous works have dealt with the existence of global-in-time solutions with small data in critical  spaces, {\em i.e.}~those which are scale-invariant under the natural scaling \rf{scal:0}, {\em cf. e.g.} \cite{B-SM,BB-SM,K-JMAA,Lem}. 

Here, as usual, a mild solution satisfies a suitable integral formulation  \rf{Duh} of the Cauchy problem  \rf{equ}--\rf{ini}.  
Due to a parabolic regularization effect,  this solution is smooth for $t>0$, hence, it satisfies the Cauchy problem in the classical sense. 
Moreover, it conserves the total mass \rf{M} and is nonnegative when $u_0\geq 0$. 
Proofs of these classical results can be found {\em e.g.} in \cite{B-SM,KS-JEE,K-O,Lem}. 

It is well known that system \rf{equ}--\rf{eqv} possesses local-in-time solutions which cannot be continued to the global-in-time ones, see \cite{BHN,N,N1,BK-JEE} for recent results.  
If $d\ge 3$,  a~sufficient condition for blowup is that  $u_0$ is well concentrated, namely
$$
\left(\frac{\int_{ \R^d} |x|^\gamma u_0(x) \dx}{\int_{ \R^d}u_0(x)\dx}\right)^{\frac{d-2}{\gamma}}\le c_dM,$$ 
for some  $0<\gamma\le 2$ and  a (small, explicit) constant $c_d>0$.
 In all these cases, at the blowup time  $0<T<\infty$, we have see $\lim_{t\nearrow T} \| u(x,t) \|_\infty=\infty$ (\cite{B-CM3,BK-JEE}). 
Results on fine asymptotics of solutions at the blowup time can be found  {\em e.g.} in  \cite{KS-SIAM,RS}. 

Criteria for a  blowup of solutions  with large concentrations can  be expressed in terms of  related critical Morrey space norms (see Remark \ref{discont} below for more details), and we have found  that the size of such a norm is also critical for the global-in-time existence versus finite time blowup. Such results  for radially symmetric (and $N$-symmetric) solutions of the  $d$-dimensional classical Keller--Segel model with $d\ge 3$ and $\alpha=2$ has been recently studied in  \cite{BKZ,BKZ-NHM}.

\subsection*{Subcritical case $\alpha\in(1,2)$}
Various results on local-in-time (and also global-in-time) solutions to the Cauchy problem  \rf{equ}--\rf{ini} with {\em subcritical}\ \  $\alpha\in (1,2)$ in various functional spaces (Lebesgue,  Besov, Morrey) have been obtained in {\em e.g.} \cite[Th. 2.2]{BW}, \cite[Th. 1.1]{B-W},  \cite[Th. 2.1]{BK-JEE}, \cite[Th. 2]{Lem} and Section \ref{smooth-sol-} below. 
They are, roughly speaking, analogous to those for $\alpha=2$.  
 Nonexistence of global-in-time solutions to problem \rf{equ}--\rf{eqv} with $\alpha<2$ corresponding to large initial conditions has been proved in \cite{BK-JEE,BKL,LR07,LR08,LRZ}.

\subsection*{Supercritical case $\alpha\in(0,1]$}
 For {\em supercritical} $\alpha\in(0,1]$ there are results on the local-in-time  solvability of \rf{equ}--\rf{ini}  with the initial data in Besov spaces  in \cite[Th. 1, Th. 2, Th. 3, Remark 10]{SYK}. 
Other solvability results   with rather smooth initial data $u_0\in H^s(\R^2)\cap L^q(\R^2)$, $s>3$, $1<q<2$, can be found  in \cite[Th. 1.1]{LRZ}, see also Theorem \ref{mainglobth} below.    
Recall that (see \cite[Remark 7]{SYK}) if $u_0\in L^1(\R^d)$ is radially symmetric and nonnegative, then the solution constructed in \cite[Th. 1, Th. 2]{SYK} is also radially symmetric, nonnegative and satisfies the $L^1$-conservation property \rf{M}.

 \subsection*{Brief description of results in this work} 
Motivated by the existence of the threshold value of mass $M=8\pi$ playing a crucial role in the study of problem \rf{equ}--\rf{ini} on the plane and with $\alpha=2$, we try to identify threshold size of initial data such that corresponding solutions of problem \rf{equ}--\rf{ini} with $d\ge 2$ and $\alpha\in(0,2)$ either exist or do not  exist for all $t\ge 0$. 
In this work we limit ourselves to nonnegative radially symmetric solutions. 
First, in Theorem \ref{sing}, we show that system \rf{equ}--\rf{eqv} has a singular stationary solution of the form $u_C(x)=s(\alpha,d)|x|^{-\alpha}$ where the constant $s(\alpha,d)$ is calculated explicitly, below. 
This singular solution plays a crucial role in our construction of   global-in-time solutions.  
In Theorem \ref{mainglobth}, we consider problem \rf{equ}--\rf{ini} with $\alpha\in(0,1)$ and we assume that a nonnegative, radial and sufficiently regular initial datum stays below the singular steady state $u_C(x)$. 
In this case, we always construct global-in-time solutions. 
Global-in-time solutions of problem \rf{equ}--\rf{ini} with $\alpha\in(1,2)$ are obtained in Theorem~\ref{2globth}. 
Here, however, we have to  assume that the initial datum stays below $u_C(x)$ in the following integral sense 
$$
\int_{\{|x|<R\}}u_0(x)\dx <\eps\int_{\{|x|<R\}}u_C(x)\dx =\eps\frac{\s_d}{d-\alpha}s(\alpha,d)R^{d-\alpha}\ \ \text{for each }\ R>0,
$$
where $\eps\in(0,1)$ is arbitrary and fixed. 

The quantity $R^{\alpha-d}\int_{\{|x|<R\}}u_0(x)\dx$ plays a crucial role in Theorem \ref{blow} where we show that some solutions  cannot  exist for all $t>0$. 
In that theorem, we show that there exists a critical constant $C_{\alpha,d}>0$  such that if $R_0^{\alpha-d}\int_{\{|x|<R_0\}}u_0(x)\dx >C_{\alpha,d}$ for some $R_0>0$, then the corresponding solution of problem \rf{equ}--\rf{ini} with $\alpha\in(0,2]$  cannot  be global-in-time. 
Theorem \ref{blow} implies also that problem \rf{equ}--\rf{ini} is locally ill-posed in the space ${\mathcal C}([0,T],M^{d/\alpha}(\R^d))$, see Remark \ref{discont} for more detail. 
At the end of this work, in Remark~\ref{r3}, we try to estimate the value  of the number $C(\alpha,d)$ and to compare it with the critical value $\frac{\s_d}{d-\alpha}s(\alpha,d)$ required in the construction of global-in-time solutions in Theorem \ref{2globth}. 

This paper is constructed in the following way.
In the next section we state and discuss all our results. 
Section \ref{sec:sing} contains calculations leading to the singular stationary solution $u_C(x)$.  
Section \ref{5} and \ref{ave} contain the proofs of Theorem \ref{g1} (for $\alpha\in(0,1)$) and Theorem \ref{g2} (for $\alpha\in(1,2)$) asserting that if an initial datum stays below the steady state then so is the corresponding solution. 
These two comparison principles allow us to construct global-in-time for $\alpha\in(0,1)$ in Theorem \ref{mainglobth}  proved in Section \ref{smooth-sol}, and for $\alpha\in(1,2)$ in Theorem \ref{2globth} proved in Section \ref{smooth-sol-}. 
Our blowup results stated in Theorem \ref{blow} are proved in Section \ref{bl}. 

The case $\alpha=2$ of classical diffusion in the Keller--Segel system is studied using different methods, and the results on the optimal conditions for global-in-time existence of radial and nonnegative solutions will appear in our forthcoming work. 

 \subsection*{Notation} 
In the sequel, $\|\cdot\|_p$ denotes the usual $L^p(\mathbb R^d)$ norm, $\|\cdot\|_{W^{s,p}}$ denotes the Sobolev space $W^{s,p}(\R^d)$ norm, and $C$'s are generic constants independent of $t$, $u$,  ...   which may, however, vary from line to line. 
The frequently used Morrey space norms are denoted by $\mn \cdot \mn_{M^p}$, for their definitions, see  \rf{hMor}.  
   Integrals with no integration limits are meant to be calculated over the whole $\R^d$. 
The relation $f(z)\sim g(z)$ as $z\to\infty$ means: $\lim_{z\to\infty} f(z)/g(z)=1$. 

\section{Statement of results}

 As we have already mentioned in Introduction, the critical value of mass $M=8\pi$ decides whether  a nonnegative integrable initial datum in problem  \eqref{equ}--\eqref{ini} with $\alpha=2$ and $d=2$ leads to a global-in-time solution or not.
 In the case of $\alpha\neq d$,   mass cannot play such a role anymore due to the scaling \rf{scal:0}.
Thus, when studying a blowup phenomenon of solutions to problem 
\eqref{equ}--\eqref{ini}, the following natural question arises:  {\it  how to determine threshold for a size and for a singularity of an initial datum such that the corresponding solution  of problem  \rf{equ}--\rf{ini} is still regular and global-in-time?}
 In this paper, in the series of four theorems,
 we partially answer this question 
 in the case of radially symmetric nonnegative solutions
 of problem  \rf{equ}--\rf{ini} with $\alpha \in (0, 2)$.
 
We begin by emphasizing that this  question is intimately related to the existence of stationary, radial, and homogeneous    solutions of system \eqref{equ}--\eqref{eqv} which (by a scaling argument) must  take the form 
 \be
 u_C(x) \equiv   \frac{s(\alpha,d)}{|x|^\alpha} \qquad \text{for a constant}
 \quad  s(\alpha,d)>0. \label{sing}
 \ee 
For $d\ge 3$  and $\alpha=2$, the function  $u_C(x)={2(d-2)}|x|^{-2}$ is the well-known  Chandrasekhar solution of system \rf{equ}--\rf{eqv}. 
Due to its singularity at $x=0$,   it is neither weak nor distributional solution   for $d\in\{3,  4\}$.
In our first theorem, we construct  counterparts of the Chandrasekhar solutions to system   \eqref{equ}--\eqref{eqv}. 

\begin{theorem}[Singular stationary solutions]\label{Chandra}
Let $d\geq 2$, $2\alpha< d$, and 
$$s(\alpha,d)=2^\alpha\frac{\Gamma\left(\frac{d-\alpha}{2}+1\right)\Gamma(\alpha)}{\Gamma\left(\frac{d}{2}-\alpha+1\right)\Gamma\left(\frac{\alpha}{2}\right)}. 
$$ 
Then $u_C(x)=\frac{s(\alpha,d)}{|x|^{\alpha}}$ is a~distributional,  stationary solution to system \rf{equ}--\rf{eqv}.
\end{theorem}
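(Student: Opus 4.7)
The statement is a direct computation once two classical pieces of information are in place: the Newtonian potential of the homogeneous profile $|x|^{-\alpha}$, and the fractional Laplacian of that same profile. My plan is first to solve $\Delta v_C + u_C=0$ explicitly, next to compute the drift $\nabla\cdot(u_C\nabla v_C)$, then to compute $(-\Delta)^{\alpha/2}u_C$, and finally to equate the two constants to pin down $s(\alpha,d)$. The condition $2\alpha<d$ guarantees that $|x|^{-\alpha}$ and $|x|^{-2\alpha}$ are locally integrable, hence define tempered distributions, so all identities make sense distributionally.

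\emph{Step 1: elliptic companion.} A one-line radial computation gives $\Delta(|x|^{2-\alpha})=(2-\alpha)(d-\alpha)|x|^{-\alpha}$, so that
\begin{equation*}
v_C(x)=-\frac{s(\alpha,d)}{(2-\alpha)(d-\alpha)}|x|^{2-\alpha},\qquad \nabla v_C(x)=-\frac{s(\alpha,d)}{d-\alpha}\,\frac{x}{|x|^{\alpha}}
\end{equation*}
solves $\Delta v_C+u_C=0$ classically on $\R^d\setminus\{0\}$, and distributionally on $\R^d$ since $|x|^{-\alpha}\in L^1_{\mathrm{loc}}$.

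\emph{Step 2: drift term.} Substituting, I obtain $u_C\nabla v_C=-\frac{s^2}{d-\alpha}\,x|x|^{-2\alpha}$. Using $\nabla\cdot(x|x|^{-k})=(d-k)|x|^{-k}$ (again radial one-liner), this yields
\begin{equation*}
\nabla\cdot(u_C\nabla v_C)=-\frac{s(\alpha,d)^{2}\,(d-2\alpha)}{d-\alpha}\,\frac{1}{|x|^{2\alpha}}.
\end{equation*}
The hypothesis $2\alpha<d$ ensures $|x|^{-2\alpha}\in L^1_{\mathrm{loc}}$, so this equality is a distributional one on $\R^d$.

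\emph{Step 3: fractional Laplacian of the profile.} By homogeneity and radial symmetry, $(-\Delta)^{\alpha/2}|x|^{-\alpha}=\lambda\,|x|^{-2\alpha}$ for some constant $\lambda=\lambda(\alpha,d)$. I will compute $\lambda$ by Fourier transform: using the classical identity $\widehat{|x|^{-\gamma}}=c_\gamma|\xi|^{\gamma-d}$ with $c_\gamma=2^{d-\gamma}\pi^{d/2}\Gamma((d-\gamma)/2)/\Gamma(\gamma/2)$ (valid for $0<\gamma<d$), multiplying by $|\xi|^\alpha$ and inverting gives
\begin{equation*}
\lambda=\frac{c_\alpha}{c_{2\alpha}}=2^{\alpha}\,\frac{\Gamma((d-\alpha)/2)\,\Gamma(\alpha)}{\Gamma(\alpha/2)\,\Gamma((d-2\alpha)/2)}.
\end{equation*}
This is the one genuinely non-trivial input; everything else is algebra. (Alternatively, one can invoke the singular-integral representation \rf{fr-lap} and evaluate the resulting integral in polar coordinates, but the Fourier route is cleanest.)

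\emph{Step 4: matching.} The stationary equation $(-\Delta)^{\alpha/2}u_C+\nabla\cdot(u_C\nabla v_C)=0$ now reads $s\lambda=\frac{s^{2}(d-2\alpha)}{d-\alpha}$, so the unique positive choice is $s=\lambda\,(d-\alpha)/(d-2\alpha)$. Substituting the expression for $\lambda$ and applying $\Gamma(z+1)=z\Gamma(z)$ to the two factors $\Gamma((d-\alpha)/2)=\frac{2}{d-\alpha}\Gamma((d-\alpha)/2+1)$ and $\Gamma((d-2\alpha)/2)=\frac{2}{d-2\alpha}\Gamma(d/2-\alpha+1)$, the factors $(d-\alpha)/(d-2\alpha)$ cancel exactly and one arrives at the stated closed form for $s(\alpha,d)$.

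The only delicate point is getting the gamma-function constant in Step~3 right; after that the proof is bookkeeping. Once $s$ is fixed by the matching in Step~4, both sides of the Keller--Segel equation are distributions on $\R^d$ (thanks to $2\alpha<d$), and the equality, having been established pointwise outside the origin between locally integrable functions, automatically extends to an identity in $\mathcal{D}'(\R^d)$.
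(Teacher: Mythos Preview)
Your proof is correct and follows essentially the same route as the paper: compute $\nabla v_C$ from the Poisson equation, evaluate the drift divergence, determine $(-\Delta)^{\alpha/2}|x|^{-\alpha}$ via the classical Fourier/Riesz formula for powers of $|x|$, and match constants. The only cosmetic difference is that the paper obtains the constant $\lambda$ by applying the Riesz potential $\mathcal{I}_\alpha=(-\Delta)^{-\alpha/2}$ and then invoking the convolution identity \rf{convol}, whereas you read it off directly from $\widehat{|x|^{-\gamma}}=c_\gamma|\xi|^{\gamma-d}$; these are two equivalent formulations of the same classical fact, and the resulting arithmetic is identical.
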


The proof of this theorem, given  in Section \ref{sec:sing},  involves   formulas for  convolutions of Bessel potentials with explicitly given constants. 
Here, we emphasize only that the assumptions $d\ge 2$ and $2\alpha<d$ are necessary for $u_C(x)$ to be a solution in the distributions sense. 

\begin{remark}   
Note that  the limiting value of $s(\alpha,d)$ as $\alpha\to 2$ is just $s(2,d)=2(d-2)$ as is for the Chandrasekhar solution.  
 \end{remark}
 
The exact  form of stationary solutions will play a crucial role in the statements and the proofs of our next results.
In the following two  theorems, we construct    global-in-time  radially symmetric solutions to problem \rf{equ}--\rf{ini} with  $\alpha\in (0,1)$ and $\alpha\in(1,2)$, respectively,   and  with large, sufficiently regular, nonnegative  initial conditions which are below the  singular steady state $u_C$.  
Methods presented in this work cannot be applied to problem \rf{equ}--\rf{ini} with $\alpha=1$.

\begin{theorem}[Global-in-time solutions in supercritical case] \label{mainglobth}
Assume  $\alpha\in(0, 1)$, $n=2p>d+1 $ with  $p\in \mathbb N$,  $\eps \in (0,1)$ and $K>0$.  
Consider a  radially symmetric initial datum  
 $ u_0\in W^{4,n}(\R^d)\cap L^1(\R^d)\subset L^\infty(\R^d)$.  
There exists $\g_0\in(0,\alpha)$ ($\g_0$ sufficiently close to $\alpha$) and $N>0$ ($N$ sufficiently large) such that if $u_0$ satisfies 
\be
0\leq u_0(x)< \min\left\{N,\frac{K}{|x|^{\g_0}},\frac{\eps s(\alpha,d)}{|x|^\alpha}\right\}\label{zalo-0}
\qquad \text{for all} \quad x\in\R^d\setminus\{0\},
\ee then  problem  \rf{equ}--\rf{ini} has a radially symmetric, global-in-time  solution  
\be
u\in {\mathcal C}([0,\infty),W^{4,n}(\R^d))\cap {\mathcal C}^1([0,\infty),W^{3,n}(\R^d)) \label{est0}
\ee 
such that $u(t)\in  L^1(\R^d)$ for each $t>0$.
Moreover, this solution satisfies the bound 
\be
0\le u(x,t)< \min\left\{N, \frac{K}{|x|^{\g_0}}, \frac{\eps s(\alpha,d)}{|x|^\alpha}\right\}
\qquad \text{for all} \quad x\in\R^d\setminus\{0\}, \; t\geq 0.
\label{est00}
\ee
\end{theorem}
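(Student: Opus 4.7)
The strategy is to combine local-in-time well-posedness in the Sobolev space $W^{4,n}(\R^d)$ with the comparison principle of Theorem~\ref{g1} (proved in Section~\ref{5}), which propagates the pointwise upper bound by the singular steady state $\varepsilon u_C$ from Theorem~\ref{Chandra}.

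First, I would construct a local-in-time solution via a contraction argument for the Duhamel formulation
\begin{equation*}
u(t) = e^{-t(-\Delta)^{\alpha/2}}u_0 - \int_0^t e^{-(t-\tau)(-\Delta)^{\alpha/2}}\nabla\cdot\bigl(u(\tau)\nabla v(\tau)\bigr)\dta
\end{equation*}
in $\mathcal{C}([0,T], W^{4,n}(\R^d))$ for $T$ small. The condition $n>d+1$ gives the embedding $W^{4,n}\hookrightarrow L^\infty$ and controls the nonlinearity via $\|\nabla v\|_\infty \lesssim \|u\|_1^{\theta}\|u\|_\infty^{1-\theta}$ together with the standard $L^p$-smoothing of the fractional heat semigroup. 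Radial symmetry and the mass identity~\eqref{M} are inherited from $u_0$ by uniqueness and mass conservation, producing a maximal solution with the regularity~\eqref{est0} on $[0,T_{\max})$.

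Second, I would invoke Theorem~\ref{g1} to upgrade the initial pointwise bound~\eqref{zalo-0} to the persistent bound~\eqref{est00}. The key point is that when $\gamma_0\in(0,\alpha)$ is close enough to $\alpha$ and $N$ is large enough, the truncated profile $W(x):=\min\{N,\,K|x|^{-\gamma_0},\,\varepsilon s(\alpha,d)|x|^{-\alpha}\}$ acts as a barrier for system~\eqref{equ}--\eqref{eqv}: on the singular branch $W$ coincides with the stationary solution $\varepsilon u_C$ from Theorem~\ref{Chandra}, while on the flatter branches $N$ and $K|x|^{-\gamma_0}$ one computes that $(-\Delta)^{\alpha/2}W$ is strictly positive and dominates the chemotactic drift $\nabla\cdot(W\nabla V)$, with $V$ solving $\Delta V+W=0$. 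The comparison principle then prevents $u(\cdot,t)$ from ever touching $W$ from below, yielding~\eqref{est00}.

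Third, I would use this a priori bound to extend the solution globally. From $u\leq N$ we get $\|u(t)\|_\infty\leq N$, and combined with mass conservation $\|u(t)\|_1=M$ this yields $\|u(t)\|_q$ bounded for every $q\in[1,\infty]$; consequently $\|\nabla v(t)\|_\infty$ is bounded uniformly in $t$. A standard bootstrap for the fractional heat equation with bounded drift then propagates the $W^{4,n}$-norm on any finite interval, so $T_{\max}=\infty$. The main obstacle is the barrier step: one must rigorously verify that the nonlocal operator $(-\Delta)^{\alpha/2}$ and the nonlocal drift behave correctly at a hypothetical first touching point between $u$ and $W$. The subcritical margin afforded by $\gamma_0<\alpha$ and the $L^\infty$ cap $N$ are both essential, since the nonlocal maximum principle is delicate on the borderline profile $u_C$ itself. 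Theorem~\ref{g1} packages precisely this delicate analysis.
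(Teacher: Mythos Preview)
Your overall plan matches the paper's: local existence, then the comparison principle of Theorem~\ref{g1}, then continuation via the resulting $L^\infty$ bound. However, your first step has a genuine gap. For $\alpha\in(0,1)$ the fractional heat semigroup gains only $\alpha$ derivatives, so moving the divergence onto the kernel in the Duhamel integral produces a factor $(t-\tau)^{-1/\alpha}$ which is \emph{not} integrable; and without using semigroup smoothing, the nonlinearity $\nabla\cdot(u\nabla v)=\nabla u\cdot\nabla v-u^2$ costs one derivative more than $u$, so a direct contraction in $W^{4,n}$ cannot close. This is exactly why the paper does not run a Duhamel fixed point here: instead it regularizes by adding $\delta\Delta u$ and mollifying the drift (Lemma~\ref{lem:reg1}), and then derives $W^{s,2p}$ energy estimates \emph{uniform in the regularization parameters} by testing $\partial^\beta$ of the equation against $(\partial^\beta u)^{2p-1}$ (Lemma~\ref{apr}). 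The dissipative terms are discarded via the Stroock--Varopoulos inequality, and the top-order commutator is handled by integrating $\nabla(\partial^\beta u)^{2p}\cdot\nabla v$ by parts to recover $-u$ from $\Delta v$. This energy method, not a semigroup contraction, is what survives the supercritical regime $\alpha<1$; you would need to replace your local-existence argument by something of this kind.

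There is a second, smaller gap: Theorem~\ref{g1} requires the decay hypothesis $\lim_{|x|\to\infty}|x|^\alpha u(x,t)=0$ uniformly on $[0,T]$ (condition~\eqref{decay_inf}), which is not an automatic consequence of $u\in W^{4,n}\cap L^1$. The paper establishes this separately in Lemma~\ref{x-decay} by estimating the evolution of localized moments $\int\Psi_R\,u\,\dx$ and using radial symmetry; you should acknowledge that this step is needed before the comparison principle can be invoked. Once both points are addressed, your continuation argument in the third step is essentially the content of Lemma~\ref{apr}(ii), and the conclusion follows as you outlined.
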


The proof of Theorem \ref{mainglobth} given in Section \ref{smooth-sol}
 is based on a {\em comparison principle involving the singular steady state} $u_C(x)={s(\alpha,d)}{|x|^{-\alpha}}$  which is rather unusual property of solutions  to models of chemotaxis. 
More precisely, we show below in Theorem \ref{g1} that if a sufficiently regular radial initial datum satisfies estimate \rf{zalo-0} then the corresponding solution must stay below a special barrier constructed with the use of  the singular steady state $u_C(x)$.

 In our next theorem, we construct global-in-time solutions in the subcritical case $\alpha\in (1,2)$ and with initial conditions in  the homogeneous Morrey    spaces $M^p(\R^d)$. 
 These spaces are defined for $1\le p<\infty$ by their norms 
 \be
 \mn u\mn_{M^p }\equiv  \sup_{R>0,\,y\in \R^d} R^{d(1/p-1)}\int_{\{|x-y|<R\}}|u|  \dx.\label{hMor}
\ee  
The key property is   another version of the comparison principle which is valid for integrated (radial) solutions.

\begin{theorem}[Global-in-time solutions in the subcritical case] \label{2globth} 
Let   $\alpha\in(1,2)$, $d>2\alpha$  and   $\eps\in (0,1)$. 
 Assume that the nonnegative radial initial datum $ u_0\in L^\infty$ satisfies 
\be 
\int_{\{|x|<R\}}u_0(x)\dx<\min\left\{ KR^{d-\g},\eps\frac{\s_d}{d-\alpha}R^{\alpha-d}\right\} \text{ for all } R>0,\ t>0,\label{zalo-2:0} 
\ee
for some fixed $\g\in(0,\alpha)$ and $K>0$, where the number $s(\alpha,d)$ is  defined in Theorem \ref{Chandra}. 
Then,  the corresponding solution of   system  \rf{equ}--\rf{eqv} is nonnegative, global-in-time and satisfies the estimates 
\be 
\int_{\{|x|<R\}}u(x,t)\dx<\min\left\{ KR^{d-\g},\eps\frac{\s_d}{d-\alpha}R^{\alpha-d}\right\} \text{ for all } R>0,\ t>0.\nonumber
\ee
\end{theorem}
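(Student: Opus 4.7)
The plan is to pass from the equation for $u$ to a closed scalar nonlocal evolution equation satisfied by the radial integrated mass
$M(R,t)=\int_{\{|x|<R\}}u(x,t)\dx$,
and then to transport the pointwise inequality \rf{zalo-2:0} in $R$ from $t=0$ to all $t>0$ via a comparison principle. The relevance of the barrier is transparent at the level of $M$: the integrated Chandrasekhar profile
$M_C(R)=\int_{\{|x|<R\}}u_C(x)\dx=\frac{\s_d\,s(\alpha,d)}{d-\alpha}R^{d-\alpha}$
is an exact stationary solution of the $M$-equation by Theorem \ref{Chandra}, so \rf{zalo-2:0} says that $M(R,0)$ starts strictly below $\eps M_C(R)$ with an additional $KR^{d-\g}$ cap near the origin, where $\eps M_C$ is too small to directly majorize $u_0\in L^\infty$. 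The argument will then proceed in three steps: local existence, propagation of the $M$-barrier, and globalization via the blowup alternative.

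For local existence, the assumption $u_0\in L^\infty$ together with the first bound in \rf{zalo-2:0} places $u_0$ in the critical Morrey space $M^{d/\alpha}(\R^d)$ with $\mn u_0 \mnp<\frac{\eps\s_d\, s(\alpha,d)}{d-\alpha}$. I would invoke the subcritical local well-posedness theory to be established in Section \ref{smooth-sol-} (building on \cite{BW,B-W,BK-JEE,Lem}) to obtain a maximal nonnegative radial mild solution on some interval $[0,T_{\max})$, together with a blowup alternative phrased in terms of the critical Morrey norm $\mnp \cdot\mnp$.

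The heart of the argument is the comparison principle for radial integrated masses, namely Theorem \ref{g2} (to be proved in Section \ref{ave}), applied with the two barriers $\eps M_C(R)$ and $KR^{d-\g}$. One must verify that each is a strict supersolution of the $M$-equation. For $\eps M_C$, the quadratic transport term and the integrated fractional-Laplacian term both respect the Chandrasekhar scaling, and since $M_C$ itself is a stationary solution the substitution $M_C\mapsto\eps M_C$ with $\eps<1$ opens a strict gap in the quadratic nonlinearity, yielding strict supersolution status. For $KR^{d-\g}$ with $\g<\alpha$, the scaling is supercritical with respect to $u_C$, so the diffusive term dominates the drift at small scales and one again obtains a strict supersolution; this is what controls $M$ on the scales where $\eps M_C$ itself degenerates. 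Combining the two via a minimum and invoking Theorem \ref{g2}, the inequalities in \rf{zalo-2:0} persist for all $R>0$ and all $t\in[0,T_{\max})$.

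Globalization is then immediate: the a priori bound $M(R,t)<\eps M_C(R)$ uniform in $t$ is exactly $\mn u(t)\mnp<\frac{\eps\s_d\, s(\alpha,d)}{d-\alpha}$, strictly below the critical Morrey threshold, so the blowup alternative forces $T_{\max}=\infty$. The main obstacle I anticipate is the proof of Theorem \ref{g2} itself: for $\alpha\in(1,2)$ the integrated fractional Laplacian acting on $M$ does not reduce to a one-sided expression as it does for $\alpha=2$, and a genuinely nonlocal touching-point/maximum-principle argument is required, with care at $R=0$ (where $M(\cdot,t)$ and both barriers vanish simultaneously) and an accompanying growth control to close the comparison at large $R$.
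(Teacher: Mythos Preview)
Your overall architecture matches the paper's: local existence in Section \ref{smooth-sol-}, then Theorem \ref{g2} for the averaged comparison principle, then continuation. The gap is in your globalization step.

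The local well-posedness (Proposition \ref{lok-istn}) is set in $\mathcal{X}_T=\mathcal{C}([0,T],M^{d/\alpha}(\R^d)\cap L^\infty(\R^d))$, so the continuation/blowup alternative is naturally phrased in terms of \emph{both} $\mn u(t)\mn_{M^{d/\alpha}}$ and $\|u(t)\|_\infty$, not the critical Morrey norm alone. Your final paragraph only controls the former. A uniform bound in the critical space $M^{d/\alpha}$ does not by itself prevent $\|u(t)\|_\infty$ from blowing up, and since the existence time in Proposition \ref{lok-istn} depends on the full $\mathcal{X}_T$-norm of the data, you cannot iterate past $T_{\max}$ from the $M^{d/\alpha}$ bound only.

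In the paper, the second barrier $KR^{d-\g}$ is not just a small-$R$ cap to make the comparison argument work; its real role downstream is to yield a \emph{subcritical} Morrey bound. Since $\g<\alpha$, one has $p=d/\g>d/\alpha$, and Theorem \ref{g2} together with Proposition \ref{prop:Mor} give $\sup_{t}\mn u(t)\mn_{M^p}<\infty$. This subcritical control is then fed into the Duhamel formula via \rf{grad-polgrupy} and \rf{Sob0}, producing
\[
\|u(t)\|_\infty\le \|u_0\|_\infty + C\int_0^t (t-s)^{-\frac{1}{\alpha}-\frac{d}{\alpha}\left(\frac{1}{p}-\frac{1}{d}\right)}\|u(s)\|_\infty\,\mn u(s)\mn_{M^p}\ds,
\]
with exponent in $(-1,0)$ precisely because $p>d/\alpha$. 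A singular Gronwall argument (\cite[1.2.1, 7.1.1]{H}) then closes the $L^\infty$ estimate. This step is missing from your proposal and is not recoverable from the critical-norm bound you retain.
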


This theorem is proved in Section \ref{smooth-sol-}. 

\begin{remark} 
Note that for the singular stationary solution $u_C(x)$ we have 
$$\int_{\{|x|<R\}}u_C(x)\dx=s(\alpha,d)\int_{\{|x|<R\}}\frac{1}{|x|^\alpha}\dx= \frac{\s_d}{d-\alpha}s(\alpha,d)R^{d-\alpha}.
$$
Thus, assumption \rf{zalo-2:0} means that the nonnegative $u_0$ is (in a certain -- averaged -- sense) below the singular steady state, analogously as in Theorem \ref{mainglobth}, assumption \rf{zalo-0}. 
\end{remark}

\begin{remark}\label{B}
Observe that if  $0\le u_0\in M^p(\R^d)$ for some $p\ge 1$ then, by definition \rf{hMor}, 
we have $\int_{\{|x|<R\}}u_0(x)\dx\le KR^{d(1-1/p)}$ for all $R>0$ and 
$K=\mn u_0\mn_{M^p}$. 
Thus,  we shall use the estimate $\int_{\{|x|<R\}}u_0(x)\dx< KR^{d-\g}$ with $\g=d/p<\alpha$ in the proof of Theorem \ref{2globth}.  
\end{remark} 

\begin{remark} \label{C}
On the other hand, if a nonnegative radial function $v=v(x)$ satisfies 
$$
\int_{\{|x|<R\}} v(x)\dx\le CR^{d-\kappa}\ \ \ {\rm for\ \ all\ \ \ } R>0
$$
with a fixed $C>0$ and $\kappa\in[1,d)$, then in fact $v$ belongs to the Morrey space $M^{d/\kappa}(\R^d)$, see Proposition \ref{prop:Mor} below for the proof. 
Thus, inequality \rf{zalo-2:0} expresses a certain assumption on $u_0$ in terms of the norm in $M^{d/\alpha}(\R^d)$. 
\end{remark}

\begin{remark}
As we have mentioned above, inequality \rf{zalo-2:0} means that the radial and nonnegative initial datum belongs to the Morrey space $M^{d/\alpha}(\R^d)$. 
This is the scaling invariant space ({\em cf.} \rf{scal:0}), and problem \rf{equ}--\rf{eqv} with $\alpha\in(1,2]$ and small initial conditions from $M^{d/\alpha}(\R^d)$ has a global-in-time solution $u\in L^\infty([0,\infty), M^{d/\alpha}(\R^d))$.
The proof of this fact for $\alpha=2$ can be found in \cite{B-SM,Lem}, however, an extension of those results to every $\alpha\in(1,2]$ is immediate. 
Theorem \ref{2globth} extends those results in the case of radial and nonnegative initial data replacing a smallness assumption in $M^{d/\alpha}(\R^d)$ by imposing inequality \rf{zalo-2:0}. 
Moreover, we show below in Theorem \ref{blow} that if $\sup_{R>0}R^{\alpha-d}\int_{\{|x|<R\}} u_0(x)\dx $ is sufficiently large, then the corresponding solution cannot be global in time. 
\end{remark}

In our last main result, we formulate  new sufficient conditions for the  nonexistence of  global-in-time solutions to problem \rf{equ}--\rf{ini}. 

\begin{theorem}[Blowup of solutions]\label{blow}
Let  $\alpha\in(0,2]$.  Consider a local-in-time, nonnegative,   classical, radially symmetric solution $u\in {\mathcal C}([0,T), L^1_{\rm loc}(\R^d))$ of   problem \rf{equ}--\rf{ini} with a nonnegative radially symmetric initial datum $u_0\in  L^1_{\rm loc}(\R^d)$.  There exists a constant $c_{\alpha,d}>0$ such that 
\begin{itemize} 
   \item[(i)] 
if 
\be
\sup_{R>0}R^{\alpha-d}\int_{\{|x|<R\}} u_0(x)\dx>c_{\alpha,d},\label{(i)}
\ee then the solution $u$ cannot exists for all $t>0$. 
   \item[(ii)]   If, moreover,  
\be
\limsup_{R\to 0}R^{\alpha-d}\int_{\{|x|<R\}} u_0(x)\dx>c_{\alpha,d},
\label{(ii)}
\ee then the solution $u(x,t)$ cannot be defined  on any time interval $[0,T]$ with some $T>0$.
\end{itemize}
\end{theorem}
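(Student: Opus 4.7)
The plan is to track the partial-mass function
\[
M(R,t):=\int_{\{|x|<R\}} u(x,t)\dx,
\]
which is precisely the object controlled by both hypotheses of the theorem, and to derive a Riccati-type differential inequality that forces $M(R_0,\cdot)$ to become unbounded in finite time for a suitable $R_0$. The singular stationary solution $u_C$ of Theorem \ref{Chandra} provides the natural comparison barrier, since $\int_{\{|x|<R\}}u_C\dx=\frac{\s_d}{d-\alpha}s(\alpha,d)R^{d-\alpha}$ is of precisely the form appearing in the hypothesis.

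First, I would integrate \rf{equ} over the ball $\{|x|<R\}$. Using the divergence theorem together with the radial identity $\partial_r v(R,t)=-M(R,t)/(\s_d R^{d-1})$ (which comes from integrating $-\Delta v=u$ over the ball), the drift contributes $-u(R,t)M(R,t)$; writing $u(R,t)=\partial_R M(R,t)/(\s_d R^{d-1})$ for radial $u$, one obtains
\[
\partial_t M(R,t) = -\int_{\{|x|<R\}}(-\Delta)^{\alpha/2} u\dx + \frac{M(R,t)\,\partial_R M(R,t)}{\s_d R^{d-1}}.
\]
Next, via the representation \rf{fr-lap}, Fubini, and antisymmetry of $(u(x)-u(y))|x-y|^{-d-\alpha}$ on the inner square $\{|x|,|y|<R\}$, the diffusion term reduces to $\mathcal{A}\int_{|x|<R}\int_{|y|\geq R}(u(x)-u(y))|x-y|^{-d-\alpha}\dy\dx$. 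Nonnegativity of $u$ combined with a careful shell-by-shell estimate of the kernel integral then yields
\[
\int_{\{|x|<R\}}(-\Delta)^{\alpha/2} u\dx \leq K_{\alpha,d}\,R^{-\alpha}M(R,t),
\]
and therefore
\[
\partial_t M(R,t) \geq -K_{\alpha,d}\, R^{-\alpha}M(R,t) + \frac{M(R,t)\,\partial_R M(R,t)}{\s_d R^{d-1}}.
\]

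For part (i), pick $R_0$ with $M(R_0,0)>c_{\alpha,d}R_0^{d-\alpha}$ where $c_{\alpha,d}$ is large enough that, along the barrier $B(R):=c_{\alpha,d}R^{d-\alpha}$, the drift term in the inequality above dominates the diffusion term. A maximum-principle argument for the resulting inequality shows that the set $\{R:M(R,t)>B(R)\}$ is time-trapping at $R_0$, giving both $M(R_0,t)\geq B(R_0)$ on the existence interval and a lower slope estimate $\partial_R M(R_0,t)\geq (d-\alpha)c_{\alpha,d}R_0^{d-\alpha-1}$ with a strictly positive excess. Substituting and writing $m(t):=M(R_0,t)$, I would derive a Riccati-type inequality
\[
m'(t)\geq \kappa\, R_0^{-\alpha}\,m(t)\bigl[R_0^{\alpha-d}m(t)-c_{\alpha,d}\bigr]
\]
with $\kappa>0$, which forces $m(t)\to\infty$ in finite time, contradicting $u\in\mathcal{C}([0,T),L^1_{\mathrm{loc}})$. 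Part (ii) then follows from the scaling \rf{scal:0}: the quantity $R^{\alpha-d}M(R,0)$ is invariant under $u_0\mapsto (u_0)_\lambda$ combined with $R\mapsto R/\lambda$, while the Riccati blow-up time from part (i) scales as $R_0^\alpha$, so the assumption $\limsup_{R\to 0}R^{\alpha-d}M_0(R)>c_{\alpha,d}$ produces arbitrarily short blow-up times as $R_0\downarrow 0$.

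The main obstacle is the apparently innocuous fractional-diffusion estimate in the second step. For $|x|$ close to $R$, the inner integral $\int_{|y|\geq R}|x-y|^{-d-\alpha}\dy$ diverges like $(R-|x|)^{-\alpha}$, so no pointwise bound of the kernel produces $R^{-\alpha}M(R,t)$ cleanly on the right. A rigorous proof must either exploit extra cancellation between $u(x)$ and $u(y)$ coming from radial monotonicity arguments, or settle for a weaker bound involving $M(\lambda R,t)$ with some $\lambda>1$ and then run the Riccati scheme in the third step at several nested scales simultaneously. Obtaining a value of $c_{\alpha,d}$ sharp enough to be compared with $\frac{\s_d}{d-\alpha}s(\alpha,d)$, as promised in Remark \ref{r3}, demands particularly careful bookkeeping of these constants.
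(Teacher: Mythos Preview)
Your proposal has two genuine gaps, one of which you already identify.

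\textbf{The fractional-diffusion bound fails with the sharp indicator.} You want
\[
\int_{\{|x|<R\}}(-\Delta)^{\alpha/2}u\,\dx\le K_{\alpha,d}R^{-\alpha}M(R,t),
\]
but as you note, the inner integral $\int_{\{|y|\ge R\}}|x-y|^{-d-\alpha}\dy$ blows up like $(R-|x|)^{-\alpha}$ as $|x|\to R^-$. This is not a technicality: the paper's own Lemma~\ref{indicator} computes $(-\Delta)^{\alpha/2}\un_{B_1}$ and shows exactly this $(1-r)^{-\alpha}$ singularity. No amount of ``radial monotonicity'' will save a uniform bound here, because $u$ is not assumed monotone and $u(y)$ outside the ball only helps with the wrong sign. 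The paper's cure is to replace $\un_{B_R}$ by the smooth weight
\[
\psi_R(x)=\Big(1-\tfrac{|x|^2}{R^2}\Big)_+^{1+\alpha/2},
\]
which is engineered so that $(-\Delta)^{\alpha/2}\psi$ is a \emph{polynomial} on the ball (formula \rf{delta-psi}) and nonpositive outside; this gives a clean bound $(-\Delta)^{\alpha/2}\psi_R\le \ell_\alpha R^{-\alpha}\psi_R$ and hence the linear dissipation term $-\ell_\alpha R^{-\alpha}w_R(t)$ with no boundary trouble. This choice of exponent $1+\alpha/2$ is the key idea you are missing; your suggested alternatives (nested scales, weaker bounds with $\lambda R$) would at best recover this in a roundabout way.

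\textbf{The slope estimate is unjustified.} Your Riccati step needs a pointwise lower bound on $\partial_R M(R_0,t)$, which you claim follows from a ``maximum-principle argument'' once $M(R_0,t)>B(R_0)$. But knowing $M$ exceeds a barrier at one radius says nothing about its derivative there; $M$ could exceed $B$ on a tiny interval with nearly flat slope. The paper avoids this entirely: after passing to the smooth moment $w_R(t)=\int\psi_R u$, the drift term becomes (via integration by parts in $r$) an integral of $M(Rr,t)^2$ against a positive weight, and a single application of the Cauchy inequality bounds it below by $C(\alpha,d)w_R(t)^2$. This produces the autonomous Riccati inequality
\[
\frac{\rm d}{\dt}w_R\ge w_R\Big(-\ell_\alpha R^{-\alpha}+\frac{C(\alpha,d)}{\sigma_d}R^{-d}w_R\Big)
\]
without any slope hypothesis. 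Part (ii) then follows exactly as you sketch, from the $R^\alpha$-scaling of the blowup time.
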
 
 
\begin{remark}\label{rem-blow} 
 The novelty of these blowup results consists in using local properties of solutions instead of a comparison of the total mass and moments  of a solution (like $\int |x|^\g u(x,t)\dx$) as was done in {\em e.g.} \cite{N}, \cite{KS-JEE},   \cite{BK-JEE}. 
 For   different blowup results, see  also \cite{BK-JEE} and \cite[Th. 4]{SYK}. 
\end{remark}

\begin{remark} \label{discont} 
Condition \rf{(i)} 
means that the Morrey space norm in $M^{d/\alpha}(\R^d)$ of the initial datum $u_0$ is large enough, see Proposition \ref{prop:Mor} below. 
Moreover, condition \rf{(ii)} applies only to initial conditions which are singular at the origin. 
This condition implies  that problem \rf{equ}--\rf{ini} is ill-posed in ${\mathcal C}([0,T],M^{d/\alpha}(\R^d))$ for every $T>0$.  
\end{remark}
   
\begin{remark}\label{rem:bl}
Notice that Theorem \ref{blow} holds true for $\alpha=2$, as well. 
In this case, results in Theorem \ref{blow} are generalizations and  improvements, while  their proofs are simplifications of those in  \cite{BKZ,BKZ-NHM}, where problem \rf{equ}--\rf{ini} with $\alpha=2$ was considered. 
In particular, the estimate for the number $c_{2,d}$ proved in \cite[Th. 1.1]{BKZ-NHM} was twice worse than that one in this work, {\em cf.} Remark \ref{r3} for more detail. 
\end{remark}

\section{Radial singular stationary solutions}\label{sec:sing} 

We are in a position to prove that system \eqref{equ}--\eqref{eqv} has singular radial stationary solutions.
 
\begin{proof}[Proof of Theorem \ref{Chandra}.]
Suppose that $u_C(x)$ of the form \rf{sing} satisfies time-independent system \rf{equ}--\rf{eqv} in the distributions sense.
 To determine the constant $s(\alpha,d)$ in \rf{sing} for $d\ge 2$ and some $\alpha\in(0,2)$,    observe that by equation \rf{eqv}, we have   
 $$
 \nabla v=-\frac{s(\alpha,d)}{d-\alpha}x|x|^{-\alpha}.
 $$ 
  Now, let us rewrite equation  \rf{equ} with $u=u_C$ as 
 \be
(-\Delta)^{\alpha/2}\left(|x|^{-\alpha}\right)-\frac{s(\alpha,d)}{d-\alpha} \nabla\cdot\left(x|x|^{-2\alpha}\right)=0,\label{eq-sing}
\ee
where the equality is meant in the distributions sense, and  it is valid for $2\alpha<d$. 

Now, applying the Riesz potential $ {\mathcal I}_\alpha$ 
\be
 {\mathcal I}_\alpha\omega = \frac{\Gamma\left(\frac{d-\alpha}{2}\right)}{\pi^{d/2}2^\alpha\Gamma\left(\frac{\alpha}{2}\right)}|x|^{\alpha-d}\ast\omega,\label{Riesz}
\ee
which is the inverse of $(-\Delta)^{\alpha/2}$
(see {\it e.g.} \cite[Ch. V, Sec. 1, (4)]{S}) we interpret \rf{eq-sing} as 
$$
|x|^{-\alpha}-\frac{s(\alpha,d)}{d-\alpha}  {\mathcal I}_\alpha\left((d-2\alpha)|x|^{-2\alpha}\right)=0.
$$
Recalling the formula for convolutions 
\be
|x|^{-\beta}\ast|x|^{-\gamma}=\pi^{d/2}\frac{\Gamma\left(\frac{d-\beta}{2}\right)\Gamma\left(\frac{d-\gamma}{2}\right)\Gamma\left(\frac{\beta+\gamma-d}{2}\right)}{\Gamma\left(\frac{\beta}{2}\right)\Gamma\left(\frac{\gamma}{2}\right)\Gamma\left(d-\frac{\beta+\gamma}{2}\right)}\ |x|^{d-(\beta+\gamma)},\label{convol}
\ee
valid if $0<\beta,\, \gamma<d$ and $ \beta+\gamma>d$ (see  {\em e.g.} \cite[Ch. V, Sec. 1, (8)]{S}) we may apply equation \rf{convol} to the identity 
$$
|x|^{-\alpha}-s(\alpha,d)\frac{d-2\alpha}{d-\alpha}\frac{\Gamma\left(\frac{d-\alpha}{2}\right)}{\pi^{d/2}2^\alpha\Gamma\left(\frac{\alpha}{2}\right)}|x|^{\alpha-d}\ast|x|^{-2\alpha}=0,
$$
whenever $2\alpha<d$. 
  Finally, by relation \rf{convol}, we obtain 
\bea
s(\alpha,d)&=&\frac{(d-\alpha)}{(d-2\alpha)}\frac{\pi^{d/2}2^\alpha\Gamma\left(\frac{\alpha}{2}\right)}{\Gamma\left(\frac{d-\alpha}{2}\right)} \frac{\Gamma\left(\frac{d-\alpha}{2}\right)\Gamma(\alpha)\Gamma\left(\frac{d-\alpha}{2}\right)}{\pi^{d/2} \Gamma\left(\frac{\alpha}{2}\right)\Gamma\left(\frac{d}{2}-\alpha\right) \Gamma\left(\frac{\alpha}{2}\right)}\nonumber\\
&=& 2^\alpha\frac{(d-\alpha)}{(d-2\alpha)} \frac{\Gamma\left(\frac{d-\alpha}{2}\right)\Gamma(\alpha)}{\Gamma\left(\frac{d}{2}-\alpha\right)\Gamma\left(\frac{\alpha}{2}\right)}\label{stala}\\
&=&2^\alpha\frac{\Gamma\left(\frac{d-\alpha}{2}+1\right)\Gamma(\alpha)}{\Gamma\left(\frac{d}{2}-\alpha+1\right)\Gamma\left(\frac{\alpha}{2}\right)}.
\nonumber
\eea
\end{proof}  

\begin{remark} 
It is useful to notice the following asymptotic formula
$$
s(\alpha,d) =
2^\alpha\frac{\Gamma\left(\frac{d-\alpha}{2}+1\right)\Gamma(\alpha)}{\Gamma\left(\frac{d}{2}-\alpha+1\right)\Gamma\left(\frac{\alpha}{2}\right)} 
\sim2^{\alpha/2}\frac{\Gamma(\alpha)}{\Gamma\left(\frac\alpha{2}\right)}d^{\alpha/2}\ \ \ {\rm as\ \ \ }d\to\infty
$$  
by \rf{Gamma}. 
This will be used at the end of Section \ref{bl} to an asymptotic comparison of sufficient conditions for blowup with those for global-in-time existence. 
\end{remark} 
 
 \begin{remark} 
   As a by-product of above computations, we obtain the following  useful formula valid for $2\alpha<d$ 
  \bea
 (-\Delta)^{\alpha/2}\left(|x|^{-\alpha}\right)= s(\alpha,d)\frac{d-2\alpha}{d-\alpha}|x|^{-2\alpha}
 =   2^\alpha  \frac{\Gamma\left(\frac{d-\alpha}{2}\right)\Gamma(\alpha)}{\Gamma\left(\frac{d}{2}-\alpha\right)\Gamma\left(\frac{\alpha}{2}\right)}|x|^{-2\alpha}. \label{lap-alpha}
  \eea 
Similarly as relations \rf{lap-alpha} have been derived form \rf{Riesz} and \rf{convol}, we may write for $\alpha+\gamma<d$ the following more general formula which we will use later on 
\be
(-\Delta)^{\alpha/2}\left(|x|^{-\gamma}\right) =2^\alpha\frac{\Gamma\left(\frac{d-\gamma}{2}\right)\Gamma\left(\frac{\alpha+\gamma}{2}\right)}{\Gamma\left(\frac{d-\alpha-\gamma}{2}\right)\Gamma\left(\frac{\alpha}{2}\right)}|x|^{-\alpha-\gamma}.
\label{lap-alpha-gamma}
\ee
\end{remark}

\section{Pointwise comparison principle}\label{5}

In order to show that a local-in-time solution can be continued globally-in-time,
we have to deal with a problem of its {\em apriori} control.
By this reason,   we prove  two comparison principles: the pointwise comparison principle and the averaged comparison principle which roughly state that 
if a radial and regular solution begins below a singular steady state 
$u_C(x)$ given by formula \rf{sing} than it must stay below this function for all time. 

In this section, we prove a pointwise comparison for such solutions. An analogous result for radial distributions  of solutions is obtained in the next section.

\begin{theorem}[Pointwise comparison principle]\label{g1}
\par\noindent 
Let $\alpha\in(0,1)$, $d\geq 2$,  and $T>0$. 
For every $\eps\in(0,1)$ and every $K>0$ there exist $\g_0\in(0,\alpha)$ ($\g_0$ sufficiently close to $\alpha$) and $N>0$ (sufficiently large) such that  every radial solution $u\in{\mathcal C}^1(\R^d\times[0,T])$ of  system \rf{equ}--\rf{eqv} with the properties 
\be
\lim_{|x|\rightarrow\infty}|x|^{\alpha}u(x,t)=0 \qquad \mbox{uniformly on $[0,T]$},\label{decay_inf}
\ee
and 
\be
0\le u_0(x)< \min\left\{N,\frac{K}{|x|^{\gamma_0}},\frac{\eps s(\alpha,d)}{|x|^\alpha}\right\}\equiv b(x) \text{ for all } x\in\R^d ,\label{zalo-1}
\ee 
satisfies   the estimate 
\be
0\le u(x,t)< b(x)  \text{ for  all } x\in\R^d  \text{ and }\  0\le t\le T. \label{u-est1}
\ee  
\end{theorem}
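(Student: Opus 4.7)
The plan is to argue by contradiction via a first-touch analysis. Suppose the conclusion \rf{u-est1} fails; since $u_0 < b$ strictly on $\R^d$, continuity of $u$ together with the decay hypothesis \rf{decay_inf} (which forces any touching radius to be finite) produces a first touching time
$$
t_0 := \inf\bigl\{t \in [0, T] :\ \sup_{x \in \R^d}(u(x,t) - b(x)) \geq 0\bigr\} \in (0, T]
$$
and a radius $r_0 < \infty$ with a point $x_0$, $|x_0| = r_0$, such that $u(x_0, t_0) = b(x_0)$ and $u < b$ on $\R^d \times [0, t_0)$. By radial symmetry the touch occurs on the whole sphere $\{|x| = r_0\}$.

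Rewriting the equation using $\Delta v = -u$ as $u_t = -(-\Delta)^{\alpha/2} u + u^2 - \nabla u \cdot \nabla v$ and evaluating at $(x_0, t_0)$ yields three first-touch ingredients: $u_t(x_0, t_0) \geq 0$; $(-\Delta)^{\alpha/2} u(x_0, t_0) \geq (-\Delta)^{\alpha/2} b(x_0)$, from the singular integral representation \rf{fr-lap} together with $u \leq b$ and $u(x_0) = b(x_0)$; and $\nabla u(x_0, t_0) = \nabla b(x_0)$ at smoothness points of $b$. Combining these gives
$$
0 \leq u_t(x_0,t_0) \leq -(-\Delta)^{\alpha/2} b(x_0) + b(x_0)^2 - \nabla b(x_0) \cdot \nabla v_u(x_0,t_0),
$$
so the problem reduces to proving the strict super-solution inequality (right-hand side $< 0$) at every admissible touch point. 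The drift term is controlled using the radial monotonicity bound $|\nabla v_u(r_0)| \leq |\nabla v_b(r_0)| \leq \eps\,|\nabla v_{u_C}(r_0)|$, which follows from $u \leq b \leq \eps u_C$ integrated over $B(0, r_0)$, giving in particular $|\nabla v_u(r_0)| \leq \eps s(\alpha,d) r_0^{1-\alpha}/(d-\alpha)$.

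The verification splits into three regions according to which of the three competing functions in \rf{zalo-1} realizes the minimum. In the inner region $|x| \leq r_1 := (K/N)^{1/\gamma_0}$, where $b \equiv N$, the drift vanishes and the ``cliff'' of $b$ outside $B(0, r_1)$ provides $(-\Delta)^{\alpha/2} b(x_0) \gtrsim N r_1^{-\alpha}$, which dominates $b^2 = N^2$ whenever $N$ is chosen so large that $N^{1-\alpha/\gamma_0} \ll K^{-\alpha/\gamma_0}$. In the middle region $r_1 < |x| < r_2 := (\eps\, s(\alpha,d)/K)^{1/(\alpha-\gamma_0)}$, where $b = K/|x|^{\gamma_0}$, the pointwise majorization $b \leq K/|x|^{\gamma_0}$ combined with formula \rf{lap-alpha-gamma} yields $(-\Delta)^{\alpha/2} b(x_0) \geq K\,C(\alpha,\gamma_0,d)\, r_0^{-\alpha-\gamma_0}$, and since $C(\alpha,\gamma_0,d) \to (d-2\alpha)\,s(\alpha,d)/(d-\alpha)$ as $\gamma_0 \uparrow \alpha$, taking $\gamma_0$ close enough to $\alpha$ (combined with the non-positive sign of the drift $-\nabla b \cdot \nabla v_u$) closes the estimate on the whole range $r_0 \in (r_1, r_2)$. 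In the outer region $|x| \geq r_2$, where $b = \eps u_C$, the stationarity identity $(-\Delta)^{\alpha/2} u_C = u_C^2 - \nabla u_C \cdot \nabla v_{u_C}$ (obtained by rewriting the equation satisfied by $u_C$ from Theorem \ref{Chandra}) together with $|\nabla v_u| \leq \eps |\nabla v_{u_C}|$ reduces the super-solution expression to a leading term $-\eps(1-\eps) u_C^2 + \eps\, \nabla u_C \cdot (\nabla v_{u_C} - \nabla v_u)$; careful bookkeeping using $u_C'\, v_{u_C}' = \alpha\, s(\alpha,d)^2/((d-\alpha) r^{2\alpha})$ shows this is strictly negative. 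Touches at the non-smooth corners $|x| = r_1, r_2$ are handled by one-sided radial differentiation and reduce to an adjacent case.

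The main obstacle lies in the simultaneous matching of the three super-solution inequalities uniformly in $\eps \in (0,1)$: the diffusive gain $(-\Delta)^{\alpha/2} b$ and the nonlinearity $b^2$ carry the same homogeneity (of order $r^{-2\alpha}$ in region C and, with $\gamma_0 \uparrow \alpha$, also asymptotically in region B), so the numerical constants must line up. The positive contribution from the drift correction (which exploits that $u \leq b < \eps u_C$ strictly in the inner regions, and hence $|\nabla v_u| < \eps |\nabla v_{u_C}|$ with quantitative slack) is what closes the estimate when $\eps$ is close to $1$, and it is precisely to obtain enough of this slack that $\gamma_0$ must be taken close to $\alpha$ and $N$ sufficiently large in the statement of the theorem.
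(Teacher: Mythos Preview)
Your first-touch setup is correct, but there is a genuine gap in the outer region (and the same issue recurs in the middle region) when $\eps$ is close to~$1$. After reaching
\[
0\le u_t(x_0,t_0)\le -(-\Delta)^{\alpha/2}b(x_0)+b(x_0)^2-\nabla b(x_0)\cdot\nabla v_u(x_0,t_0),
\]
you bound the diffusion term by $-\eps(-\Delta)^{\alpha/2}u_C(x_0)$ (using $b\le\eps u_C$) and separately bound the drift term by~$0$ (the only pointwise upper bound available, since $M_u(r_0)$ has no useful lower bound). With the stationarity identity this gives, at $r_0=|x_0|$,
\[
-\eps(1-\eps)\,u_C^2(x_0)+\eps\,\nabla u_C\cdot(\nabla v_{u_C}-\nabla v_u)
\;\le\;\eps\,s(\alpha,d)^2 r_0^{-2\alpha}\Bigl(-(1-\eps)+\tfrac{\alpha}{d-\alpha}\Bigr),
\]
which is negative only for $\eps<\tfrac{d-2\alpha}{d-\alpha}<1$, not for all $\eps\in(0,1)$. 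Your closing paragraph tries to recover the missing negativity from the ``quantitative slack'' $|\nabla v_u|<\eps|\nabla v_{u_C}|$, but this goes the wrong way: a smaller $|\nabla v_u|$ makes the drift term \emph{less} negative, not more, and in any case this slack is $O(r_0^{1-d})$ and vanishes (relative to $r_0^{-2\alpha}$) as $r_0\to\infty$.

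The paper avoids this by \emph{not} decoupling the two terms. It passes to the auxiliary variable $\wu=|x|^\gamma u$ (with $\gamma=\alpha$ in the outer region), so that the modified barrier is locally constant and the touch point becomes a genuine local maximum of $\wu$; the equation for $\wu_t$ then contains both a nonlocal diffusion integral and a drift term $-\tfrac{\gamma}{\sigma_d}|x|^{-d}\wu\,M(|x|,t)$, each \emph{linear} in the unknown profile. The key computation shows that the combined integrand (written over $r<R_0$) carries the coefficient $\mathcal{A}\phi(r/R_0)-\alpha\eps s(\alpha,d)$, which is strictly positive by the inequality $\mathcal{A}\sigma_d\ge\alpha\,s(\alpha,d)$ (Lemma~\ref{waru}). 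Hence the \emph{joint} maximum of the right-hand side over admissible profiles is attained at $\wu\equiv\eps s(\alpha,d)$, and evaluating there yields
\[
s(\alpha,d)^2 R_0^{-\alpha}\,\tfrac{d-2\alpha}{d-\alpha}\,\eps(\eps-1)<0
\]
for every $\eps\in(0,1)$. The point is that the maximizer for the diffusion term (large $u$) coincides with the minimizer for the drift term; your term-by-term bounding uses inconsistent extremizers and therefore loses exactly the factor $(1-\tfrac{\alpha}{d-\alpha})$ needed to cover the full range of~$\eps$.
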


First, we formulate   an elementary observation (see also \cite[Lemma 2.1]{BKZ}) which will be used in the proof of Theorem \ref{g1} as well as  in the proof of blowup in Theorem  \ref{blow} further.
 
\begin{lemma}\label{potential}
Let $u\in L^1_{\rm loc}(\R^d)$ be a radially symmetric function,   such that  $v=E_d\ast u$ with $E_2(x)=-\frac1{2\pi}\log|x|$ and $E_d(x)=\frac1{(d-2)\sigma_d}|x|^{2-d}$ for $d\ge 3$, solves the Poisson equation $\Delta v+u=0$. Here,  the area of the unit sphere ${{\mathbb S}^d\subset {\mathbb R}^d}$ is denoted by 
\be
\s_d=\frac{2\pi^{d/2}}{\Gamma\left(\frac{d}{2}\right)}.\label{sigma}
\ee 
Then 
$$
\nabla v(x)\cdot x=-\frac1{\sigma_d}|x|^{2-d}\int_{\{|y|\le |x|\}} u(y)\dy.
$$
\end{lemma}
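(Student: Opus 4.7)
The plan is to combine the radial symmetry of $v=E_d\ast u$ with the divergence theorem on the ball $B_r=\{|y|<r\}$ where $r=|x|$. Since both $u$ and the kernel $E_d$ are radially symmetric, $v$ is radial as well; writing $v(x)=V(|x|)$ one has $\nabla v(x)=V'(|x|)\frac{x}{|x|}$, and therefore
$$
\nabla v(x)\cdot x = |x|\,V'(|x|).
$$

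Next I would integrate the Poisson equation $\Delta v + u = 0$ over $B_r$ and apply the divergence theorem. The outward unit normal on $\partial B_r$ is $y/|y|$, so the normal derivative of $v$ there reduces to $V'(r)$, giving
$$
\sigma_d r^{d-1} V'(r) = \int_{\partial B_r}\nabla v\cdot\frac{y}{|y|}\,dS(y) = \int_{B_r}\Delta v(y)\dy = -\int_{B_r} u(y)\dy,
$$
where $\sigma_d r^{d-1}$ is the surface area of $\partial B_r$ by \rf{sigma}. Solving for $V'(r)$ and inserting into the previous display yields the claimed identity
$$
\nabla v(x)\cdot x = -\frac{1}{\sigma_d}|x|^{2-d}\int_{\{|y|\le |x|\}} u(y)\dy.
$$

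The only subtlety is that $u$ is merely $L^1_{\rm loc}$, so the divergence theorem has to be applied to a potentially non-smooth $v$. I would handle this by mollification: take a radial mollifier $\rho_\e$, apply the argument above to the smooth radial potential $v_\e = E_d\ast(\rho_\e\ast u)$, and pass to the limit as $\e\searrow 0$, using that for every fixed $r$ both sides of the target identity depend continuously on $u$ in the $L^1_{\rm loc}$ topology. Alternatively one may invoke local $W^{2,p}$ regularity of the Newtonian potential. No serious obstacle is expected here; the lemma is essentially a reformulation of the Gauss flux theorem for a radial distribution, and the technical steps are routine.
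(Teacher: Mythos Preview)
Your proposal is correct and follows essentially the same approach as the paper: integrate $\Delta v+u=0$ over the ball $B_r$, apply the divergence (Gauss--Stokes) theorem, and use radial symmetry to reduce the surface integral to $\sigma_d r^{d-1}V'(r)$. The paper's proof is more terse and does not address the $L^1_{\rm loc}$ regularity issue you raise via mollification, but the core argument is identical.
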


\begin{proof}
By the Gauss--Stokes theorem,  
we obtain for the radial distribution function $M$ of $u$
\be
M(R)\equiv \int_{\{|y|\le R\}} u(y)\dy=
-\int_{\{|y|=R\}} \nabla v(y)\cdot\frac{y}{|y|}\,{\rm d}S.\label{distr-u}
\ee
Thus,  for the radial function $\nabla v(x)\cdot\frac{x}{|x|}$ and $|x|=R$,  we arrive at the identity
$$\nabla v(x)\cdot x=
\frac{1}{\sigma_d}R^{2-d}\int_{\{|y|=R\}}\nabla v(y)\cdot\frac{y}{|y|}\,{\rm d}S = -\frac1{\sigma_d}R^{2-d}M(R)$$
which completes the proof.  \end{proof} 

\begin{remark}\label{u-M}
Notice that each radial function $u=u(x)$  satisfies for $|x|=R$ the equality 
\be
u(x)=\frac{1}{\sigma_d}R^{1-d}\frac{\partial}{\partial R}M(R),\label{u-rad}
\ee 
which results immediately from the definition of $M(R)$ in \rf{distr-u} written in the polar coordinates.  
\end{remark} 
 
 \begin{lemma}\label{waru}
For each $d\in\mathbb N$, $d\ge 2$, and $\alpha\in(0,1]$, the following  inequality 
\be
{\mathcal A}\s_d\ge \alpha s(\alpha,d)\label{warunek}
\ee
holds, where $\mathcal A$ is defined in \rf{stala-A}, $s(\alpha,d)$ --- in \rf{stala} and $\s_d$ --- in \rf{sigma}. 
\end{lemma}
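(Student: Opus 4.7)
The strategy is direct substitution and reduction to a pure Gamma-function inequality, followed by an endpoint-plus-monotonicity argument. First, I would insert the formulas \rf{stala-A}, \rf{sigma}, and the expression for $s(\alpha, d)$ from Theorem \ref{Chandra} into \rf{warunek}. Using $\Gamma(z+1) = z\Gamma(z)$ at $z = -\alpha/2$ (valid for $\alpha\in(0,2)$, recalling $\Gamma$ is negative on $(-1,0)$) gives $|\Gamma(-\alpha/2)| = \tfrac{2}{\alpha}\Gamma(1-\alpha/2)$. The factor $\alpha$ on the right of \rf{warunek} then cancels against the $1/\alpha$ this identity produces on the left, along with a common factor $2^\alpha$, reducing \rf{warunek} to the equivalent inequality
\[
\Phi_d(\alpha) := \frac{\Gamma\!\bigl(\tfrac{d+\alpha}{2}\bigr)\, \Gamma\!\bigl(\tfrac{d}{2}-\alpha+1\bigr)\, \Gamma\!\bigl(\tfrac{\alpha}{2}\bigr)}{\Gamma\!\bigl(\tfrac{d}{2}\bigr)\, \Gamma\!\bigl(\tfrac{d-\alpha}{2}+1\bigr)\, \Gamma(\alpha)\, \Gamma\!\bigl(1-\tfrac{\alpha}{2}\bigr)} \;\geq\; 1.
\]

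Next I would check the endpoints of $(0,1]$. At $\alpha=1$, using $\Gamma(1)=1$, $\Gamma(1/2)=\sqrt{\pi}$, $\Gamma(\tfrac{d}{2}-\alpha+1)\big|_{\alpha=1} = \Gamma(\tfrac{d}{2})$ and $\Gamma(\tfrac{d-\alpha}{2}+1)\big|_{\alpha=1} = \Gamma(\tfrac{d+1}{2})$, all Gamma factors pair up and $\Phi_d(1)=1$. Thus the inequality is sharp at $\alpha=1$, which explains the hypothesis $\alpha\leq 1$ in the statement. As $\alpha\to 0^+$, the ratio $\Gamma(\alpha/2)/\Gamma(\alpha)\to 2$ (from $\Gamma(\alpha)\sim 1/\alpha$, $\Gamma(\alpha/2)\sim 2/\alpha$) while all other factors tend to $1$, so $\Phi_d(0^+)=2$.

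Between these endpoints, I would prove $\Phi_d$ is non-increasing on $(0,1]$. Taking the logarithmic derivative produces
\[
\frac{\Phi_d'(\alpha)}{\Phi_d(\alpha)} = \tfrac12 \psi\!\bigl(\tfrac{d+\alpha}{2}\bigr) + \tfrac12\psi\!\bigl(\tfrac{d-\alpha}{2}+1\bigr) - \psi\!\bigl(\tfrac{d}{2}-\alpha+1\bigr) + \tfrac12\psi\!\bigl(\tfrac{\alpha}{2}\bigr) + \tfrac12\psi\!\bigl(1-\tfrac{\alpha}{2}\bigr) - \psi(\alpha),
\]
which I would control using the digamma duplication $\psi(2t) = \tfrac12[\psi(t)+\psi(t+\tfrac12)]+\log 2$: this rewrites $\psi(\alpha)$ as a half-integer average of $\psi(\alpha/2)$ and $\psi(\alpha/2+\tfrac12)$ up to a constant, and identical manipulation handles the $d$-dependent triple. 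The remaining differences are bounded via the series representation $\psi'(t)=\sum_{k\ge 0}(t+k)^{-2}$ and monotonicity of $\psi'$.

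The main obstacle is this monotonicity step, since the individual bracketed digamma differences change sign and require careful grouping. If the polygamma calculus proves delicate, a cleaner alternative is geometric: compute $(-\Delta)^{\alpha/2}|x|^{-\alpha}$ directly from the singular integral \rf{fr-lap} by averaging over spheres of radius $r=|y|$. The prefactor $\mathcal{A}$ times the angular integration produces $\mathcal{A}\sigma_d$, while matching the outcome with the already-known value $s(\alpha,d)\frac{d-2\alpha}{d-\alpha}|x|^{-2\alpha}$ from \rf{lap-alpha} (and using Lemma \ref{potential} to interpret the inner-radius contribution as the mass integral of $u_C$) reduces \rf{warunek} to the sign of a definite integral of $r^{-\alpha-1}$ against a nonnegative excess spherical mean, which is transparent.
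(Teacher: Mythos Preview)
Your reduction to the Gamma-function inequality $\Phi_d(\alpha)\ge 1$ is correct and agrees exactly with the paper's inequality (4.4); the endpoint checks at $\alpha=1$ and $\alpha\to 0^+$ are also fine. The gap is that the proof stops at the hard step: you assert that $\Phi_d$ is non-increasing on $(0,1]$, write down the logarithmic derivative, and then only sketch how the digamma duplication and the series for $\psi'$ might be combined---while yourself flagging that ``the individual bracketed digamma differences change sign and require careful grouping.'' None of that grouping is actually carried out, and the fallback ``geometric'' argument via \eqref{fr-lap} and \eqref{lap-alpha} is left at the level of a heuristic. As written, neither branch delivers the inequality.

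The paper avoids the monotonicity question entirely. After the same reduction it rewrites the inequality as a comparison of Beta functions,
\[
{\rm B}\!\left(\tfrac{\alpha}{2},\tfrac{d}{2}-\alpha+1\right)\,{\rm B}\!\left(\tfrac{\alpha}{2},\tfrac{\alpha}{2}\right)\;\ge\;{\rm B}\!\left(\tfrac{\alpha}{2},1-\tfrac{\alpha}{2}\right)\,{\rm B}\!\left(\tfrac{\alpha}{2},\tfrac{d}{2}\right),
\]
and then proves this by two applications of H\"older's inequality to the integral representation ${\rm B}(\mu,\nu)=\int_0^1\tau^{\mu-1}(1-\tau)^{\nu-1}\,{\rm d}\tau$, with the specific exponents
\[
p=\frac{\tfrac{d}{2}-\tfrac{3\alpha}{2}+1}{\tfrac{d-\alpha}{2}},\qquad q=\frac{\tfrac{d}{2}-\tfrac{3\alpha}{2}+1}{1-\alpha},
\]
which are admissible precisely because $1-\alpha>0$. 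Multiplying the two H\"older estimates gives the Beta-function inequality directly. This is a genuinely different and shorter route than your proposed digamma analysis: it needs no monotonicity in $\alpha$, no polygamma bounds, and makes the role of the hypothesis $\alpha\le 1$ transparent (it is exactly what makes $q>0$).
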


\begin{proof} 
We note that inequality \rf{warunek} is equivalent to  the following relation for the Gamma function 
\be
\frac{\Gamma\left(\frac{d+\alpha}{2}\right)}{\Gamma\left(1-\frac{\alpha}{2}\right)\Gamma\left(\frac{d}{2}\right)} 
\ge
\frac{\Gamma\left(\frac{d-\alpha}{2}+1\right)\Gamma(\alpha)}{\Gamma\left(\frac{d}{2}-\alpha+1\right)\Gamma\left(\frac{\alpha}{2}\right)} .\label{warunek2}
\ee 
Indeed, this is an immediate consequence of the relation 
$$
{\mathcal A}\s_d=\frac{2^\alpha \Gamma\left(\frac{d+\alpha}{2}\right)}{\pi^{d/2}\left|\Gamma\left(-\frac{\alpha}{2}\right) \right|} \frac{2\pi^{d/2}}{\Gamma\left(\frac{d}{2}\right)}, 
$$
obtained from \rf{stala-A} and \rf{sigma}, and of 
$$
\alpha s(\alpha,d)=\alpha 2^\alpha\frac{\Gamma\left(\frac{d-\alpha}{2}+1\right)\Gamma(\alpha)}{\Gamma\left(\frac{d}{2}-\alpha+1\right)\Gamma\left(\frac{\alpha}{2}\right)} 
$$
by \rf{stala}, as well as of the property of the Gamma function: 
$\frac{\alpha}{2}\left|\Gamma\left(-\frac{\alpha}{2}\right)\right|=\Gamma\left(1-\frac{\alpha}{2}\right)$.  
Now, estimate  \rf{warunek2} is, in turn, equivalent to the following one 
\be
\frac{\Gamma\left(\frac{d}{2}-\alpha+1\right)\Gamma\left(\frac{\alpha}{2}\right) \Gamma\left(\frac{\alpha}{2}\right)}{\Gamma\left(\frac{d-\alpha}{2}+1\right) \Gamma(\alpha)}\ge 
\frac{\Gamma\left(\frac{\alpha}{2}\right)\Gamma\left(1-\frac{\alpha}{2}\right) \Gamma\left(\frac{d}{2}\right)}{\Gamma\left(\frac{d+\alpha}{2}\right)},\label{warunek3}
\ee 
that is, to 
\be
{\rm B}\left(\frac{\alpha}{2},\frac{d}{2}-\alpha+1\right) {\rm B}\left(\frac{\alpha}{2},\frac{\alpha}{2}\right) 
\ge {\rm B}\left(\frac{\alpha}{2},1-\frac{\alpha}{2}\right){\rm B}\left(\frac{\alpha}{2},\frac{d}{2}\right),\label{warunek4}
\ee  
where ${\rm B}$ is the Euler Beta function defined as 
\be 
{\rm B}(\mu,\nu)=\int_0^1\tau^{\mu-1}(1-\tau)^{\nu-1}\dta =\frac{\Gamma(\mu)\Gamma(\nu)}{\Gamma(\mu+\nu)} \ \ {\rm for\ all\ \ } \mu,\, \nu>0.
\label{beta}
\ee 
 Clearly, for $\alpha=1$, inequality \rf{warunek3} is satisfied. For $\alpha\in(0,1)$,  by the H\"older inequality, we obtain 
$$
\int_0^1 \tau^{\alpha/2-1}(1-\tau)^{d/2-1}\dta \le \left(\int_0^1\tau^{\alpha/2-1}(1-\tau)^{d/2-\alpha}\dta\right)^{1/p} \left(\int_0^1\tau^{\alpha/2-1}(1-\tau)^{\alpha/2-1}\dta\right)^{1/q},
$$
and
$$
\int_0^1 \tau^{\alpha/2-1}(1-\tau)^{-\alpha/2}\dta \le \left(\int_0^1\tau^{\alpha/2-1}(1-\tau)^{d/2-\alpha}\dta\right)^{1/q} \left(\int_0^1\tau^{\alpha/2-1}(1-\tau)^{\alpha/2-1}\dta\right)^{1/p},
$$ 
with 
$$p=\frac{\frac{d}{2}-\frac{3\alpha}{2}+1}{\frac{d-\alpha}{2}}, \ \ \ \ 
q=\frac{\frac{d}{2}-\frac{3\alpha}{2}+1}{1-\alpha}, \ \ \ \ \ \ \frac1p+\frac1q=1,$$ 
since $1-\alpha> 0$.
Putting those inequalities together, we arrive at inequality \rf{warunek4}. 
\end{proof}

 Let us begin the proof of the comparison principle.  
 
\noindent  
  {\it Proof of Theorem \ref{g1}.}
Let $u$ be a solution of problem \rf{equ}--\rf{ini} for an initial datum $u_0$ satisfying relations \rf{decay_inf} and   \rf{zalo-1}. 
The proof of inequality \rf{u-est1} is by contradiction. 
Suppose that there exists $t_0\in(0,T]$, 
which is  the first moment when $u(x,t)$ hits the barrier $b(x)$ defined in \rf{zalo-1}.
By {\em apriori}\, ${\mathcal C^1}$ regularity  of $u(x,t)$ and by property \rf{decay_inf}  the value of $t_0$ is well defined. 
Moreover, there exists $x_{t_0}\in\R^d$ satisfying $u(x_{t_0},t_0)=b(x_{t_0})$. 

In the following, we use the numbers
\be
R_*=\left(\frac{K}{N}\right)^{1/\g_0}\ \ \textrm{and}\ \  R_\#=\left(\frac{\eps s(\alpha,d)}{K}\right)^{1/(\alpha-\g_0)} \label{RR}
\ee
which are 
the values of $R=|x|$ corresponding to  the intersection points of three curves  forming the graph of the barrier $b(x)$. Here, we choose $N$ so large to have $0<R_*< R_\#$. 
 We   consider an auxiliary function 
\be
\wu(x,t_0)=|x|^\g u(x,t_0), \label{wu}
\ee
where the value of $\g$ depends on $|x_{t_0}|$ in the following way 
\begin{equation}
\g=  \left\{ \begin{array}{ll}  \label{gamma}
\alpha & \textrm{if \quad $|x_{t_0}|\ge R_\#$,}\\
\gamma_0& \textrm{if \quad $R_*\le |x_{t_0}|<R_\#$,}\\ 
0 & \textrm{if \quad $|x_{t_0}|<R_*$}. 
 \end{array}\right. 
 \end{equation}  
 Here, the constant $\g_0\in(0,\alpha)$ will be chosen later on. 
It is easy to see  that  $\wu(x,t_0)$ as a function of $x$ attains its local maximum  at $x_{t_0}$. 
Indeed, 
by the choice of $\g$, the function $\wu(x,t)=|x|^\g u(x,t)$ hits the modified barrier 
$|x|^\g b(x)$ at a {\em constant} part of its graph. 
Hence, 
the existence of $x_1\ne x_{t_0}$ such that $\wu(x_1,t_0)>\wu(x_{t_0},t_0)$ would contradict the choice of $t_0$ as the first hitting point of the barrier. 
Thus, we have  
\be
\nabla \wu(x,t_0){\big|_{x=x_{t_0}}}=0.\label{gradient}
\ee 
Taking into account formula \rf{eqv}, equation  \rf{equ}   can be rewritten as 
\be
  u_t=-(-\Delta)^{\alpha/2}u+u^2-\nabla u\cdot\nabla v.\label{eqq}
 \ee 
Here, we have the identity 
$$
\nabla u=\nabla(|x|^{-\g}\wu)=-\g|x|^{-2-\g}x\wu+|x|^{-\g}\nabla\wu. 
$$ 
Thus, for radially symmetric solutions, by Lemma \ref{potential}  and formula \rf{wu} we get  
$$ 
\wu_t=-|x|^\g(-\Delta)^{\alpha/2}\left(|x|^{-\g}\wu\right) +|x|^{-\g}\wu^2 -\frac{\g}{\sigma_d}|x|^{-d}\wu M(|x|,t)-\nabla\wu\cdot\nabla v, 
$$ 
where the radial distribution function $M$ is defined in \rf{distr-u}. 
Hence, by equation \rf{gradient} we obtain at the point $x=x_{t_0}$ and $t=t_0$ 
\begin{equation}
\begin{split}
 \frac{\partial}{\partial t}\wu(x_{t_0},t){\big|_{t=t_0}} 
=&-|x_{t_0}|^\g(-\Delta)^{\alpha/2}\left(|x|^{-\g}\wu\right){\big|_{(x_{t_0},t_0)}} +|x_{t_0}|^{-\g}\wu^2(x_{t_0},t_0) \\
&-\frac{\g}{\sigma_d}|x_{t_0}|^{-d}\,\wu(x_{t_0},t_0) M(|x_{t_0}|,t_0)\equiv B(\wu)(x_{t_0}, t_0).
\end{split}
\label{pochodnaW}
\end{equation}
Our goal is to show that the right-hand side of equation \rf{pochodnaW} is strictly negative. It will give a contradiction because $\wu(x_{t_0},t)$ has to increase as a function of $t$ in a neighborhood of $t_0$ to hit the barrier $|x|^\g b(x)$
at point $x_{t_0}$ (recall that $|x|^\g b(x)$ is constant in a neighborhood of $x_{t_0}$) at a moment of time $t_0$. 

We begin by auxiliary results. For radial functions $u=u(x,t)$, abusing slightly the notation, we will simply write $\wu(x,t)=\wu(R,t)$ and  $\wu(y,t)=\wu(r,t)$,  where $R=|x|$ and $r=|y|$. 
In this new notation, we rewrite the last term on the right-hand side of \rf{pochodnaW}
as follows:
\be
\frac{\g}{\s_d}|x|^{-d}\wu(x,t)M(|x|,t)=\g R^{-d}\wu(R,t)\int_0^Rr^{d-1-\g}\wu(r,t)\dr.\label{trzeci-wyraz}
\ee 

In order to deal with  the fractional Laplacian term in \rf{pochodnaW} with  $0<\alpha<1$, we use the definition \rf{fr-lap} with the constant  \rf{stala-A}.  
Applying formula \rf{fr-lap} to the function $\omega(x)=u(x,t)=|x|^{-\g}\wu(x,t)$  we arrive at 
 \begin{align}
-(-\Delta)^{\alpha/2}\left(|x|^{-\g}\wu\right)(x,t)
&= {\mathcal A}\,P.V.\int\frac{|x|^\g\wu(x-y,t)-|x-y|^{\g}\wu(x,t)}{|x|^\g|x-y|^\g|y|^{d+\alpha}}\dy\nonumber\\
&= \wu(x,t) {\mathcal A}\,P.V.
\int\frac{1}{|y|^{d+\alpha}}\left(\frac{1}{|x-y|^\g}-\frac{1}{|x|^\g}\right)\dy \label{diss}\\ 
&\quad+\, {\mathcal A}\,P.V.\int\frac{\wu(x-y,t)-\wu(x,t)}{|x-y|^\g|y|^{d+\alpha}}\dy.\nonumber
\end{align} 
Recalling  the notation $R=|x|$,
let us express the second term on the right-hand side of \rf{diss} for radially symmetric $\wu=\wu(x,t)$ in polar coordinates as follows 
\begin{equation}\label{suma}
\begin{split}
{\mathcal A}  \lim_{\delta\searrow 0}  & \int_{\{|x-y|>\delta\}}\frac{\wu(y,t)-\wu(x,t)}{|y|^\g|x-y|^{d+\alpha}}\dy 
={\mathcal A}  \lim_{\delta\searrow 0}   \int_{\{||x|-|y||>\delta\}}\frac{\wu(y,t)-\wu(x,t)}{|y|^\g|x-y|^{d+\alpha}}\dy
\\ &={\mathcal A}\lim_{\delta\searrow 0}
\int_{\{|r-R|>\delta\}} \big(\wu(r,t)-\wu(R,t)\big)\int_{\mathbb S^d}\frac{\dsi}{|x+r\sigma|^{d+\alpha}}\, r^{d-1-\g}\dr\\
&=\lim_{\delta\searrow 0}{\mathcal A}\left(\int_0^{R-\delta}+\int_{R+\delta}^\infty\right)\big(\wu(r,t)-\wu(R,t)\big) R^{-d-\alpha}\phi\left(\frac{r}{R}\right)r^{d-1-\g}\dr, 
\end{split}
\end{equation} 
where the function $\phi$ is defined by 
 \be
 \phi(\tau)\equiv \int_{{\mathbb S}^d}\frac{\dsi}{\left|\ei+\tau\sigma\right|^{d+\alpha}}>0\label{phi}
 \ee
 with ${\mathbb S}^d$ denoting the unit sphere in $\R^d$  and $\ei=(1,0,\dots,0)\in\R^d$. 
 This function satisfies 
 $$
 \phi(0)=\sigma_d=\frac{2\pi^{d/2}}{\Gamma\left(\frac{d}{2}\right)}< \phi(\tau)\ \ {\rm for}\ \ 0<\tau< 1 \quad {\rm and}\quad\phi(\tau)\sim \s_d\tau^{-d-\alpha}\ \ {\rm as}\ \ \tau\to\infty.
 $$
Moreover, it is clear that  the function $\phi$   has a singularity at $\tau=1$.  
Now, by a direct calculation, we obtain
 \begin{equation}\label{phi:eq}
 \phi(\tau)=\sigma_d^{-1-\alpha/d}\left(1-{\tau^2}\right)^{-1-\alpha/d} \int_{{\mathbb S}^d}{\mathcal P}^{1+\alpha/d}(\tau\s,\ei)\dsi,
 \end{equation} 
 where
$$
{\mathcal P}(y,z)=\frac{1}{\s_d}\frac{1-|y|^2}{\left|z-y\right|^{d }} \text{ \ for \ } |y|<1 \text{ \ and \ }|z|=1
$$ 
is the Poisson kernel of the unit ball in $\R^d$, which for a fixed $z$ is harmonic in $y$. 
It is  classical that the function  ${\mathcal P}(y,z)^{1+\alpha/d}$ is subharmonic with respect to $y$, thus  the averages  given by formula  
$
\s_d^{-1}\int_{{\mathbb S}^d} {\mathcal P}^{1+\alpha/d}(\tau\s,\ei)\dsi
$
increase as the functions of $\tau\in(0,1)$. 
 Hence,  by equation \eqref{phi:eq}, the function $\phi(\tau)$
is a strictly increasing function on $(0,1)$.  

Next, we come back to equality \rf{pochodnaW} and we observe that its right-hand side can be written 
by \rf{trzeci-wyraz}, \rf{diss} and \rf{suma} in the following way 
\begin{equation}\label{Bwu}
\begin{split}
B(\wu)(x_{t_0}, t_0) =&
 \wu(x,t)\,{\mathcal A}\;P.V.
\int\frac{1}{|y|^{d+\alpha}}\left(\frac{1}{|x-y|^\g}-\frac{1}{|x|^\g}\right)\dy\\
&+
\lim_{\delta\searrow 0} B_\delta (\wu)(x_{t_0}, t_0),
\end{split}
\end{equation}
where 
\begin{equation}\label{Bdelta}
\begin{split}
B_\delta (\wu)(x_{t_0},& t_0) \\
=&
{\mathcal A}\left(\int_0^{R_0-\delta}+\int_{R_0+\delta}^\infty\right)\big(\wu(r,t_0)-\wu(R_0,t_0)\big) R_0^{-d-\alpha}\phi\left(\frac{r}{R_0}\right)r^{d-1-\g}\dr\\
&\qquad\qquad -\g\wu(R_0,t_0) \int_0^{R_0-\delta}r^{d-1-\g}\wu(r,t_0)\dr\\
=&\int_0^{R_0-\delta}\wu(r,t_0) r^{d-1-\g}\left(
R_0^{\g-\alpha}{\mathcal A}\phi\left(\frac{r}{R_0}\right)-\g\wu(R_0,t_0)
 \right)\dr\\
 &\qquad \qquad+\int_{R_0+\delta}^\infty\big(\wu(r,t_0)-\wu(R_0,t_0)\big) R_0^{-d-\alpha}\phi\left(\frac{r}{R_0}\right)r^{d-1-\g}\dr.
\end{split}
\end{equation}
We are in a position to show that the right-hand side of equality \eqref{pochodnaW} is strictly negative by finding the maximizer of $B_\delta (\wu)(x_{t_0}, t_0)$ for each fixed $\delta>0$
on the set of all nonnegative functions satisfying 
\be
\wu(x,t)
=|x|^\g u(x,t), \quad \text{where} \quad u(x,t)\leq  
b(x)=
\min\left\{N ,\frac{K}{|x|^{\g_0}},\frac{\eps s(\alpha,d)}{|x|^{\alpha}}
\right\}, \label{constr}
\ee 
and where the parameter $\gamma$ is defined in \rf{gamma}.

We consider three separate cases depending on the value $R_0=|x_{t_0}|$. 

\noindent  {\bf Case 1.}
$R_0=|x_{t_0}|\ge R_\#=\left({\eps s(\alpha,d)}/{K}\right)^{1/(\alpha-\g)}$.  Here, by definition \eqref{gamma}, 
we have $\g=\alpha$ and by the definition of $|x_{t_0}|$, we obtain $\wu(x_{t_0},t_0)=\epsilon s(\alpha,d)$.
We extend  the class of considered functions $\wu$ by looking  for the maximum value of quantity $B_\delta (\wu)(x_{t_0}, t_0)$ in \rf{Bdelta} for the functions satisfying $\wu(x,t)=|x|^\alpha u(x,t)\le \eps s(\alpha,d)$.    
In this case, the first term on the right-hand side of \rf{Bdelta} reduces to 
\begin{equation}\label{c1:0}
\int_0^{R-\delta} \wu(r,t_0)
r^{d-1-\alpha}
\left({\mathcal A}\phi\left(\frac{r}{R}\right)-\alpha \epsilon s(\alpha,d) \right)\dr.
\end{equation}
Since 
$\s_d=\phi(0)<\phi(\tau)$ for $\tau>0$, $\epsilon \in (0,1)$,  and ${\mathcal A}\s_d\ge \alpha s(\alpha,d)$ for each $\alpha\in (0,1)$ (see Lemma \ref{waru}), we obtain 
\be
{\mathcal A}\phi\left(\frac{r}{R}\right)-\alpha \epsilon s(\alpha,d) 
\geq  {\mathcal A}\s_d- \alpha \epsilon s(\alpha,d)>0.\label{star}
\ee
Hence, the integral in \rf{c1:0} increases as $\wu(r,t_0)$ increases with respect to $r$ and, under the constraint $\wu(x,t)\le \eps s(\alpha,d)$,
 its maximum  is attained at a constant function $\wu(r,t)=\wu(R_0,t_0)=\epsilon  s(\alpha,d)$.

The integrand of the second integral on the right-hand side of \rf{Bdelta} is nonpositive also  because of the constraint   
$\wu(r,t_0)\leq \epsilon s(\alpha, d)=\wu(R_0,t_0)$ by the definition of $R_0$ and $t_0$.   
Its maximum equals zero  and this is attained at the constant function $\wu(r,t_0)\equiv \epsilon  s(\alpha,d)$, as well.

Consequently, for each $\delta>0$, the quantity $B_\delta (\wu)(x_{t_0}, t_0)$  attains its maximum  at the constant function $\wu(r,t_0)=\epsilon  s(\alpha,d)$.
Now, we may pass   to the limit  $\delta\searrow 0$ using the formula from \rf{Bwu} to conclude that 
 the right-hand side of  equality  \rf{pochodnaW} attains its maximum (under the constraint  $\wu(x,t)\le \eps s(\alpha,d)$)
 at the constant function $\wu(r,t)\equiv\epsilon  s(\alpha,d)$. Hence, using  formulas \rf{trzeci-wyraz}  and \rf{lap-alpha}, we have got 
\begin{equation}\nonumber 
\begin{split}
\frac{\partial}{\partial t}&\wu(x_{t_0},t){\big|_{t=t_0}} \\
 \le&-R_0^\alpha(-\Delta)^{\alpha/2}\left(|x|^{-\alpha}\epsilon  s(\alpha,d)\right){\big|_{|x|=R_0}} +R_0^{-\alpha}(\epsilon  s(\alpha,d))^2 
 -\frac{\alpha }{d-\alpha}R_0^{-\alpha}( \epsilon  s(\alpha,d))^2   \\
=&s(\alpha,d)^2 
R_0^{-\alpha}\left(-\epsilon \frac{d-2\alpha}{d-\alpha}+\epsilon^2-\epsilon^2 \frac{\alpha }{d-\alpha}\right)\\
=&s(\alpha,d)^2 
R_0^{-\alpha}\left( \frac{d-2\alpha}{d-\alpha}\right)\epsilon (-1+\epsilon)
<0,
\end{split}
\end{equation}
where the last inequality is obtained because $\alpha\in (0,1)$ and $\epsilon \in (0,1)$. 

\noindent{\bf Case 2.} 
$R_*\le R<R_\#$, where $R_*$ and $R_\#$ are defined in \rf{RR}.  
Similarly as was in Case 1, we look for the maximum of $B_\delta$ in \rf{Bdelta} within an extended class of admissible functions $\wu(x,t)\le K$.  
Let us first show that ${\mathcal A}\s_d\ge \g_0 KR_0^{\alpha-\g_0}$ where $\g_0\in(0,\alpha)$ is arbitrary at this stage of the proof. 
Indeed, using inequalities \rf{star} and $R_0\le R_\#$, we get 
$$
{\mathcal A}\s_d\ge \alpha s(\alpha,d)>\g_0\eps s(\alpha,d) =\g_0 KR_\#^{\alpha-\g_0}\ge \g_0 KR_0^{\alpha-\g_0}. $$
Hence, again as before, the first term on the right-hand side of \rf{Bdelta} is nonnegative in the class of functions satisfying $0\le \wu(x,t_0)\le K$. 
Thus, as in Case 1, passing to the limit $\delta\searrow 0$, and under the constraint $0\le \wu(x,t_0)\le K$, we obtain that the constant function $\wu(x,t_0)\equiv K$ maximizes the right-hand side of \rf{Bwu}, {\em i.e.} 
$$
\frac{\partial}{\partial t}\wu(x_{t_0},t){\big|_{t=t_0}} \le -R_0^\g(-\Delta)^{\alpha/2}\left(|x|^{-\g}\right){\big|_{|x|=R_0}}+R_0^{-\g}K^2-\frac{\g K^2}{d-\g}R_0^{-\g}.$$ 
To continue, we recall that 
$$
R^\g(-\Delta)^{\alpha/2}(|x|^{-\g})(R)=R^{-\alpha}C_{\alpha,\g}
$$
with $C_{\alpha,\g} =\frac{2^\alpha\Gamma\left(\frac{\alpha+\gamma}{2}\right)\Gamma\left(\frac{d-\gamma}{2}\right)}{\Gamma\left(\frac{d-\alpha-\gamma}{2}\right)\Gamma\left(\frac{\alpha}{2}\right)}$ --- 
a consequence of formula \rf{lap-alpha-gamma}. 
We also need the inequality $KR_0^{\alpha-\g_0}\le \eps s(\alpha,d)$,  which is obvious by the definition of $R_\#$.  
Note also that we have got the inequality 
$$
-C_{\alpha,\alpha}+s(\alpha,d)\left(1-\frac{\alpha}{d-\alpha}\right)\le 0, $$
because this is equivalent to estimate \rf{warunek}. 
Thus, given $\eps\in(0,1)$ there exists $\g_0\in(0,\alpha)$, sufficiently close to $\alpha$, such that 
\bea
\frac{\partial}{\partial t}\wu(x_{t_0},t){\big|_{t=t_0}}
&\le& \frac{K}{R^\alpha}\left(-C_{\alpha,\g}+KR^{\alpha-\g}\left(1-\frac{\g}{d-\g}\right)\right)\nonumber\\
&\le& \frac{K}{R^\alpha}\left(-C_{\alpha,\g}+\epsilon s(\alpha,d )\left(1-\frac{\g}{d-\g}\right)\right)<0.\nonumber
\eea 

\noindent{\bf Case 3.} 
$R_0=|x_{t_0}|\le R_*=\left(K/N\right)^{1/\g_0}$. 
Here, by definition \rf{gamma}, we have $\g=0$. 
Thus, equation \rf{pochodnaW} reduces to  
\be
\frac{\partial}{\partial t}\wu(x_{t_0},t){\big|_{t=t_0}}
=-(-\Delta)^{\alpha/2}\wu+\wu^2. \label{poW}
\ee 
We estimate the right-hand side of equation \rf{poW} under the constraint 
\be 
0\le \wu(x,t)\le \min\left\{N,\frac{K}{|x|^{\g_0}}\right\} \equiv \omega(x).\label{u-comp}
\ee  
By the definition of $x_{t_0}$ we have $u(x_{t_0},t_0)=N$, and thus
\be
-(-\Delta)^{\alpha/2}u(x_{t_0},t_0)+u^2(x_{t_0},t_0)
={\mathcal A}\int\frac{u(x,t_0)-u(x_{t_0},t_0)}{|x-x_{t_0}|^{d+\alpha}}\dx+N^2.\label{wyr1}
\ee
Since $\wu(x,t_0)\le \omega(x)$ and $\wu(x_{t_0},t_0)=N=\omega(x_{t_0})$, we have 
\begin{equation}\label{wyr2}
\begin{split}
-(-\Delta)^{\alpha/2}u(x_{t_0},t_0)
&\le -(-\Delta)^{\alpha/2}\omega(x_{t_0})\\ 
&={\mathcal A}\int_{R_0}^\infty(\omega(r)-N)\int_{{\mathbb S}^d}\frac{\dsi}{|r\s-x_{t_0}|^{d+\alpha}}\dr\\
&\le{\mathcal A} \int_{R_0}^\infty (u(r,t_0)-N)r^{-d-\alpha} \int_{{\mathbb S}^d}\frac{\dsi}{\left|\frac{x_{t_0}}{r}-\s\right|^{d+\alpha}}\dr.
\end{split}
\end{equation} 
Remember that $\omega(r)-N=0$ for $r\le R_0$ and $\omega(r)-N<0$ for $r>R_0$. 
Moreover, as in the case of the function $\phi$ in \rf{phi}, the following quantity 
$$\int_{{\mathbb S}^d}\frac{\dsi}{|r\s-x_{t_0}|^{d+\alpha}}$$ is increasing as a function of $|x_{t_0}|$. 
Therefore, the maximal value of $-(-\Delta)^{\alpha/2} \omega(x_{t_0}) +N^2$ is attained at $x_{t_0}=0$. 
Now, we come back to equation \rf{wyr1}. 
Using the above estimates and the relation $NR_*^{\g_0}=K$, we obtain 
\begin{equation}\label{wyr3}
\begin{split}
\frac{\partial}{\partial t}\wu(x_{t_0},t){\big|_{t=t_0}}
&\le -{\mathcal A}\int_{\{|x|>R_*\}}\left(N-\frac{K}{|x|^{\g_0}}\right)\frac{\dx}{|x|^{d+\alpha}}+\left(\frac{K}{R_*^{\g_0}}\right)^2\\ 
&={\mathcal A}\s_d\int_{R_*}^\infty \left(\frac{K}{R_*^{\g_0}}-N\right)\frac{r^{d-1}\dr}{r^{d+\alpha}}+\left(\frac{K}{R_*^{\g_0}}\right)^2\\
&={\mathcal A}\s_d\left(\frac{1}{\alpha+\g_0}\frac{K}{ R_*^{\alpha+\g_0}}-\frac{K}{\alpha R_*^\alpha}\right)+    \left(\frac{K}{R_*^{\g_0}}\right)^2\\ 
&=\frac{K}{R_*^{\alpha+\g_0}}\left(\left(\frac{1}{\alpha+\g_0}-\frac{1}{\alpha}\right) {\mathcal A}\s_d +KR_*^{\alpha-\g_0}\right).
\end{split}
\end{equation}  
Since $\frac{1}{\alpha+\g_0}-\frac{1}{\alpha}<0$, we may choose $N$ sufficiently large so that  $KR_*^{\alpha-\g_0}=K (K/N)^{\alpha/\g_0-1}$ is sufficiently small so that $\frac{\partial}{\partial t}\wu(x_{t_0},t){\big|_{t=t_0}}<0$ by \rf{wyr3}.   
This completes the proof of Theorem \ref{g1}. 
\qed

 \section{Averaged comparison principle}\label{ave}

We prove in this section a counterpart of Theorem \ref{g1} for radial distributions  of solutions.  
\begin{theorem}\label{g2}
\par\noindent
Let $d\ge 3$  and $\alpha\in(1,2)$ be such that $2\alpha<d$.
Consider a solution $u\in{\mathcal C}^2(\R^d\times[0,T])$  of system \rf{equ}--\rf{eqv} with the radially symmetric  initial data $u_0\ge 0$ satisfying the integrated bound   
\be
 M(R,0)= \int_{\{|x|<R\}} u_0(x)\dx< \min\left\{KR^{d-\gamma},\eps sR^{d-\alpha}\right\}\equiv b_I(R)\ \ {\rm for\ all\ \ }R>0,\label{zalo-2}
\ee
for some  $\eps\in(0,1)$, $\g\in(0,\alpha)$, $K\in (0, \eps s) $,  and $s = \frac{\s_d }{d-\alpha}s(\alpha,d)$  (the number $s(\alpha,d)$ is  defined in \rf{stala}). 
Then there exists   $\g_0=\g_0(\alpha,\eps)\in(0,\alpha)$ such that for each initial  condition \rf{ini} satisfying condition \rf{zalo-2} with  a certain $\g\in(\g_0,\alpha)$  the inequality 
\be
0\le M(R,t)=\int_{\{|x|<R\}}u(x,t)\dx < b_I(R) 
\label{est2}
\ee 
is satisfied for all $R>0$ and $t\in[0,T]$. 
\end{theorem}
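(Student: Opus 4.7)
\emph{Proof plan.} First I would derive the evolution of $M(R,t)=\int_{|x|<R}u(x,t)\,\dx$. Integrating \rf{equ} over the ball $B_R$, using the divergence theorem for the drift, Lemma \ref{potential} for $\nabla v\cdot x/|x|$, and the radial identity $u(R,t)=M'(R,t)/(\sigma_d R^{d-1})$ from \rf{u-rad}, I obtain
\[
\partial_t M(R,t)=-\int_{|x|<R}(-\Delta)^{\alpha/2}u(x,t)\,\dx+\frac{M(R,t)\,M'(R,t)}{\sigma_d R^{d-1}}.
\]
The argument is by contradiction: suppose $(R_0,t_0)$ is the first pair where $M(R_0,t_0)=b_I(R_0)$. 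Then $M(\cdot,t_0)\le b_I$ pointwise, $\partial_t M(R_0,t_0)\ge 0$, and $R\mapsto b_I(R)-M(R,t_0)$ attains its minimum $0$ at $R_0$. Because $b_I$ has a concave corner at $R_\#:=(\eps s/K)^{1/(\alpha-\g)}$ — incompatible with a smooth tangency from below — one has $R_0\ne R_\#$, so $R_0$ lies in the interior of exactly one of the two smooth branches of $b_I$. The aim is to show $\partial_tM(R_0,t_0)<0$.

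The key step is a monotone representation of the diffusion term under averaged comparison. Applying \rf{fr-lap} and the antisymmetry of $(u(z)-u(x))/|x-z|^{d+\alpha}$ on $\{|x|,|z|<R_0\}$ yields
\[
-\!\int_{|x|<R_0}\!(-\Delta)^{\alpha/2}u\,\dx=\mathcal A\!\int_{|x|<R_0}\!\int_{|z|>R_0}\!\frac{u(z)-u(x)}{|x-z|^{d+\alpha}}\,dz\,\dx.
\]
Given a smooth radial comparator $\wu$ with distribution $\widetilde M$ satisfying $M(\cdot,t_0)\le\widetilde M$ globally and $M(R_0,t_0)=\widetilde M(R_0)$, set $w=u-\wu$ and $W=M-\widetilde M\le 0$. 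Since $W$ attains its maximum $0$ at $R_0$, $W(R_0)=W'(R_0)=0$ and $W$ vanishes to second order there; in particular $w(R_0)=W'(R_0)/(\sigma_d R_0^{d-1})=0$. Passing to polar coordinates $\rho=|x|$, $r=|z|$ and performing a regularized integration by parts on $[0,R_0-\delta]\cup[R_0+\delta,\infty)$ — whose boundary singularities of order $\delta^{-\alpha}$ are cancelled by the $O(\delta^2)$ vanishing of $W$ — I take $\delta\searrow 0$ to obtain
\[
\int_{|x|<R_0}(-\Delta)^{\alpha/2}w\,\dx=\mathcal A\!\int_{R_0}^\infty\!G'(r)W(r)\,dr-\mathcal A\!\int_0^{R_0}\!H'(\rho)W(\rho)\,d\rho,
\]
where $G(r)=\int_{|x|<R_0}|x-r\ei|^{-d-\alpha}\,\dx$ is decreasing ($G'<0$) and $H(\rho)=\int_{|z|>R_0}|\rho\ei-z|^{-d-\alpha}\,dz$ is increasing ($H'>0$). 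Together with $W\le 0$ this gives $\int(-\Delta)^{\alpha/2}w\,\dx\ge 0$, i.e.\ $L[M]\le L[\widetilde M]$ with $L[M]:=-\int_{|x|<R_0}(-\Delta)^{\alpha/2}u\,\dx$.

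Applied to the two natural comparators: in Case 1 ($R_0>R_\#$), $M(\cdot,t_0)\le\eps M_C$ globally with $M_C(R)=sR^{d-\alpha}$ the distribution of the Chandrasekhar solution $u_C$ (Theorem \ref{Chandra}); the stationary identity $\partial_tM_C\equiv 0$ gives $L[M_C]=-(d-\alpha)s^2 R_0^{d-2\alpha}/\sigma_d$, while tangency forces $M'(R_0,t_0)=(d-\alpha)\eps s R_0^{d-\alpha-1}$, so
\[
\partial_tM(R_0,t_0)\le L[\eps M_C]+\frac{(d-\alpha)\eps^2 s^2 R_0^{d-2\alpha}}{\sigma_d}=\frac{(d-\alpha)s^2 R_0^{d-2\alpha}}{\sigma_d}\,\eps(\eps-1)<0,
\]
contradicting $\partial_tM(R_0,t_0)\ge 0$. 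In Case 2 ($R_0<R_\#$), one compares with $v(x)=K(d-\g)\sigma_d^{-1}|x|^{-\g}$ (distribution $M_v(R)=KR^{d-\g}$) and uses \rf{lap-alpha-gamma} with $C_{\alpha,\g}=2^\alpha\Gamma(\tfrac{d-\g}{2})\Gamma(\tfrac{\alpha+\g}{2})/[\Gamma(\tfrac{d-\alpha-\g}{2})\Gamma(\tfrac\alpha2)]$ to obtain
\[
\partial_tM(R_0,t_0)\le\frac{K(d-\g)R_0^{d-\alpha-\g}}{\sigma_d(d-\alpha-\g)}\bigl[(d-\alpha-\g)KR_0^{\alpha-\g}-\sigma_d C_{\alpha,\g}\bigr].
\]
Since $KR_0^{\alpha-\g}<\eps s$, the bracket is bounded above by $(d-\alpha-\g)\eps s-\sigma_d C_{\alpha,\g}$, which at $\g=\alpha$ reduces via \rf{lap-alpha} to $s(d-2\alpha)(\eps-1)<0$; continuity in $\g$ then produces $\g_0=\g_0(\alpha,\eps)\in(0,\alpha)$ such that the bracket stays strictly negative for every $\g\in(\g_0,\alpha)$, again giving a contradiction.

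The main technical obstacle is the rigorous derivation of the monotone representation above. The kernel $G(r)$ behaves like $(r-R_0)^{-\alpha}$ as $r\to R_0^+$, which is non-integrable for $\alpha\in(1,2)$, so the naive integration by parts yields formally divergent boundary contributions. These cancel only thanks to the second-order vanishing of $W$ at $R_0$ — itself a consequence of the first-order contact $u(R_0,t_0)=\wu(R_0)$ between $M$ and $\widetilde M$ — and must be extracted by a careful $\delta\searrow 0$ limit procedure, parallel to the one in the proof of Theorem \ref{g1}.
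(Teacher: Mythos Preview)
Your proof is correct and follows essentially the same approach as the paper: a first-hitting-time contradiction using the monotonicity of $j=(-\Delta)^{\alpha/2}\un_{B_1}$ (your $G$ and $H$ are precisely its outside/inside pieces, cf.\ Lemma~\ref{indicator}), integration by parts in the mass variable with the $\delta$-singularity absorbed by the second-order contact at $R_0$, and explicit evaluation at the two power-law comparators $\eps u_C$ and $K(d-\gamma)\sigma_d^{-1}|x|^{-\gamma}$. The only difference is organizational---the paper flattens the barrier by working with $z=r^{\gamma-d}M$ (resp.\ $r^{\alpha-d}M$) and estimates inline, whereas you isolate the comparison $L[M]\le L[\widetilde M]$ as a separate step; the two case splits coincide with labels swapped.
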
 

First, we need the following asymptotic result in the proof of this   comparison principle. 
\begin{lemma}\label{indicator} 
Let $\alpha\in(0,2)$. 
The fractional Laplacian of the indicator function of the unit ball $j(r)\equiv(-\Delta)^{\alpha/2}\un_{B_1}(x)$    satisfies the relation 
\be
j(r)=C_{\alpha,d}\,{\rm sgn}(1-r)(1-r)^{-\alpha}+
 \left\{ \begin{array}{ll} 
{\mathcal O}(1) & \textrm{if \quad $0<\alpha<1$,}\\
{\mathcal O}\left(\left|\log r\right|\right)& \textrm{if \quad $\alpha=1$,}\\ 
{\mathcal O}\left(r^{1-\alpha}\right) & \textrm{if \quad $1<\alpha<2$},
 \end{array}\right. \  \ {\rm as\ \ } r=|x|\to 1,\label{distrj}
\ee 
with some number $C_{\alpha,d}>0$. 
Moreover,  $j=j(r)$ is an increasing function on $(0,1)\cup(1,\infty)$,   $j(r)>0$ for $r\in(0,1)$, and $j(r)<0$ if $r\in(1,\infty)$. 
\end{lemma}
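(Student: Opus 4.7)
I would derive all three assertions (signs, monotonicity, asymptotic) from the singular-integral representation \rf{fr-lap}. Substituting $\omega=\un_{B_1}$ and observing that $\un_{B_1}(x)-\un_{B_1}(y)$ has constant sign on each connected component of $\{|y|\ne 1\}$, the principal value is superfluous, and one obtains
$$
j(r)=\mathcal{A}\int_{\{|y|>1\}}\frac{\dy}{|x-y|^{d+\alpha}}\ \text{for}\ r<1,\qquad j(r)=-\mathcal{A}\int_{\{|y|<1\}}\frac{\dy}{|x-y|^{d+\alpha}}\ \text{for}\ r>1.
$$
The stated signs are immediate.

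For strict monotonicity, I would introduce the radial potentials $\Phi(x)=\int_{\{|y|>1\}}|x-y|^{-d-\alpha}\dy$ on $B_1$ and $\Phi(x)=\int_{\{|y|<1\}}|x-y|^{-d-\alpha}\dy$ on $\{|x|>1\}$. Differentiating under the integral sign, $\Delta\Phi(x)=(d+\alpha)(\alpha+2)\int|x-y|^{-d-\alpha-2}\dy>0$, so $\Phi$ is strictly subharmonic on its domain. Writing $\Phi(x)=\Psi(|x|)$, the identity $(\rho^{d-1}\Psi'(\rho))'=\rho^{d-1}\Delta\Phi>0$, combined with $\Psi'(0)=0$ (interior case, from smoothness of $\Phi$ at the origin) or with $\rho^{d-1}\Psi'(\rho)\to 0$ as $\rho\to\infty$ (exterior case, using $\Psi(\rho)={\mathcal O}(\rho^{-d-\alpha})$), gives $\Psi'>0$ on $(0,1)$ and $\Psi'<0$ on $(1,\infty)$. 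Taking the sign in the definition of $j$ into account, $j$ is strictly increasing on both intervals.

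For the asymptotic expansion, I use rotational invariance to set $x=re_1$, consider $r<1$, and put $\eta:=1-r$. Local coordinates $y=(1-t,y')$, $y'\in\R^{d-1}$, near the nearest boundary point $e_1$ give $|x-y|^2=(t-\eta)^2+|y'|^2$, and the constraint $|y|\ge 1$ becomes $|y'|^2\ge 2t-t^2$ (automatic for $t\le 0$). I split the integral into a far part on $\{|y-e_1|\ge\delta_0\}$ (uniformly bounded in $\eta$) and a near part on $\{|y-e_1|<\delta_0\}$, which I compare to the explicit flat half-space integral
$$
I_{\text{flat}}(\eta)=\int_{-\infty}^{0}\int_{\R^{d-1}}\frac{\mathrm{d}y'\,\mathrm{d}t}{\bigl((t-\eta)^2+|y'|^2\bigr)^{(d+\alpha)/2}}=\frac{K_{d,\alpha}}{\alpha}\eta^{-\alpha},\ \ K_{d,\alpha}:=\int_{\R^{d-1}}(1+|v|^2)^{-\frac{d+\alpha}{2}}\,\mathrm{d}v.
$$
Multiplying by $\mathcal{A}$ identifies $C_{\alpha,d}=\mathcal{A}K_{d,\alpha}/\alpha$ and the principal singular term. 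The remainder, after polar coordinates in $y'$ and extending the outer radius of $y'$-integration to infinity (at the cost of a bounded error), becomes the curvature integral
$$
I_{\text{curv}}(\eta)=|S^{d-2}|\int_{0}^{\delta_0}|t-\eta|^{-1-\alpha}G\!\left(\frac{\sqrt{2t-t^2}}{|t-\eta|}\right)\mathrm{d}t,\ \ G(A):=\int_{A}^{\infty}\frac{v^{d-2}\,\mathrm{d}v}{(1+v^2)^{(d+\alpha)/2}},
$$
satisfying $G(A)\le C\min(1,A^{-1-\alpha})$. Splitting the $t$-integration at $t=\eta^2$, on $(0,\eta^2)$ the argument $A$ stays bounded and the integrand is of order $\eta^{-1-\alpha}$, contributing $\mathcal{O}(\eta^{1-\alpha})$; on $(\eta^2,\delta_0)$, where $A\ge 1$, the bound $G(A)\le CA^{-1-\alpha}$ yields an integrand $\le C(2t-t^2)^{-(1+\alpha)/2}$, which integrates to $\mathcal{O}(1)$ for $\alpha<1$, $\mathcal{O}(|\log\eta|)$ for $\alpha=1$, and $\mathcal{O}(\eta^{1-\alpha})$ for $\alpha\in(1,2)$. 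The case $r>1$ proceeds symmetrically with $\eta=r-1$.

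The principal technical obstacle is precisely this last step: the dominant subleading contribution migrates between the innermost range $t<\eta^2$ (which governs the estimate for $\alpha\in(1,2)$) and the moderate range away from $t=\eta$ (which dominates for $\alpha\le 1$), with the logarithmic correction emerging at the borderline $\alpha=1$ from the integral $\int_{\eta}^{\delta_0}t^{-1}\,\mathrm{d}t$.
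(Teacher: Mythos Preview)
Your argument is correct and structurally identical to the paper's: both obtain the sign statements directly from the singular-integral representation, and both derive the asymptotic by writing $\un_{B_1}=\un_\Pi-\un_P$ with $\Pi=\{y_1\le 1\}$ the tangent half-space and $P=\Pi\setminus B_1$, so that the half-space contribution yields the explicit term $C_{\alpha,d}\,\mathrm{sgn}(1-r)|1-r|^{-\alpha}$ and the integral over $P$ supplies the error. The differences are purely technical. For the error $\int_P|x-y|^{-d-\alpha}\dy$, the paper uses a dyadic decomposition in annuli $A_\rho=\{\rho<|x-z|<2\rho\}$ together with the volume bound $|P\cap A_\rho|\le C\rho^{d+1}$ (reflecting the second-order tangency of $\partial B_1$ and $\partial\Pi$), which gives the three regimes via the sum $\sum_{\frac12|1-r|\le\rho<1}\rho^{1-\alpha}$; your cylindrical-coordinate computation with the split at $t=\eta^2$ reaches the same conclusion by a direct integral estimate and is arguably more transparent about where the exponent $1-\alpha$ originates. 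For monotonicity, the paper simply declares it ``clear'' from inspection of the integrand, which is indeed immediate for $r>1$ (moving $x$ away from $B_1$ increases every $|x-y|$) but less so for $r<1$; your subharmonicity argument via $(\rho^{d-1}\Psi')'=\rho^{d-1}\Delta\Phi>0$ is a cleaner justification of the interior case.
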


\begin{proof} 
Observe that from definition \rf{fr-lap}
$$
j(r)=j(|x|)={\mathcal A}\int_{\{|y|<1\}}\frac{\un_{B_1}(x)-\un_{B_1}(x-y)}{|y|^{d+\alpha}}\dy.
$$ 
Thus, 
the statements on the sign of $j=j(r)$ for $r=|x|<1$ and $r=|x|>1$   as well as those on the monotonicity of $j$ are clear; it suffices to check  the value of $j(|x|)$ when $\un_{B_1}(x-y)\neq 0$. 

Now, without loss of generality we may consider $x=(r,0,\dots,0)$ with $r>0$, since the problem is rotationally invariant. 
First, we show that for the half-space 
$ \Pi=(-\infty,1]\times\mathbb R^{d-1}$,  we have 
$$
(-\Delta)^{\alpha/2}\un_\Pi(x)=C_{\alpha,d}\,{\rm sgn}(1-r)|1-r|^{-\alpha}.
$$  
Indeed, if $x\in\Pi$, then denoting $y=(y_1,\bar y)$ with $\bar y\in \mathbb R^{d-1}$, we have 
\begin{equation*}
\begin{split}
(-\Delta)^{\alpha/2}\un_\Pi(x) &= {\mathcal A}\int_{\{x-y\notin\Pi\}}\frac{\dy}{|y|^{d+\alpha}}\nonumber\\ 
&={\mathcal A}\int_{-\infty}^{r-1}{\rm d}y_1\, \int_{\mathbb R^{d-1}}\frac{{\rm d}\bar y}{(y_1^2+|\bar y|^2)^{(d+\alpha)/2}}\nonumber\\
 &={\mathcal A}\s_{d-1}\int_{-\infty}^{r-1}|y_1|^{-1-\alpha}{\rm d}y_1\,  \int_0^\infty\frac{\r^{d-2}\,{\rm d}\r}{(1+\r^2)^{(d+\alpha)/2}}\nonumber\\
 &=C_{\alpha,d}|1-r|^{-\alpha}. 
\end{split}
\end{equation*} 
Similarly as above, for $x\notin\Pi$ we have 
$$
(-\Delta)^{\alpha/2}\un_\Pi(x)=-{\mathcal A}\int_{\{x-y\in\Pi\}}\frac{\dy}{|y|^{d+\alpha}}=-C_{\alpha,d}|1-r|^{-\alpha}$$ 
again with 
$$
C_{\alpha,d}={\mathcal A}\s_{d-1}\alpha^{-1}\int_0^\infty \frac{\r^{d-2}\,{\rm d}\r}{(1+\r^2)^{(d+\alpha)/2}}.
$$ 

To complete the proof, it suffices to show that for $P=\Pi\setminus B_1$ with  the unit ball  $B_1\subset \mathbb R^d$ centered at the origin, the estimate 
$$
\big|(-\Delta)^{\alpha/2}\un_P(x)\big|\le \left\{ \begin{array}{ll} 
{\mathcal O}(1) & \textrm{if \quad $0<\alpha<1$,}\\
{\mathcal O}\left(\left|\log r
\right|\right)& \textrm{if \quad $\alpha=1$,}\\ 
{\mathcal O}\left(r^{1-\alpha}\right) & \textrm{if \quad $1<\alpha<2$}.
 \end{array}\right.  
$$
holds as $r\to 1$.  
By the translational invariance and homogeneity, it suffices to consider the annulus $A_\r$ centered at $x$: $A_\r=\{z:\r<|x-z|<2\r\}.$ 
Clearly, $P\cap A_\r =\emptyset$ holds whenever $\r<\frac12|1-r|$ and the volume of $P\cap A_\r$ is less than $C\r^{d-1}\r^2=C\r^{d+1}$.  
Therefore, splitting the integration domain into dyadic pieces we get 
\begin{align}
\int_P\frac{\dy}{|x-y|^{d+\alpha}} &\le \int_{\{ y:\, |x-y|\ge 1\}}\frac{\dy}{|x-y|^{d+\alpha}} +\sum_{\left\{\frac12|1-r|\le\r<1,\, {\rm dyadic}\right\}}\int_{P\cap A_\r}\frac{\dy}{|x-y|^{d+\alpha}}\nonumber\\
&\le C+C\sum_{\left\{\frac12|1-r|< \r\ {\rm dyadic}\right\}}\frac{\r^{d+1}}{\r^{d+\alpha}}\nonumber\\
&\le   \left\{ \begin{array}{ll} 
c & \textrm{if \quad $0<\alpha<1$,}\\
c\left|\log r\right|& \textrm{if \quad $\alpha=1$,}\\ 
cr^{1-\alpha} & \textrm{if \quad $1<\alpha<2$}.
 \end{array}\right. 
\end{align} 
\end{proof} 
  
\noindent {\em Proof of Theorem \ref{g2}.} 
Begin with an arbitrary $K>0$ and $\g\in(0,\alpha)$ which will be specified later on. 
Similarly as in the proof of Theorem \ref{g1}, we proceed by contradiction and we define $t_0$ as the first moment when $M(R,t)$ hits the barrier $b_I(R)$. 
Because of the inequalities  $M(R,t)\le M$ and inequality in \rf{zalo-2}, the number  $t_0$ is well defined; moreover,  for some $x_{t_0}\in\R^d$ such that  $R_{t_0}=|x_{t_0}|$, we have $M(R_{t_0},t_0)=b_I(R_{t_0})$. 
  
  Define $R_\#=\left(\eps s/K\right)^{1/(\alpha-\g)}>0 \ $ 
  as the intersection point of the graphs $ KR^{d-\gamma}$ and $\eps sR^{d-\alpha}$ which exists because $\g<\alpha$. We  consider the function
\be
0\le z(r,t)=r^{\g-d}M(r,t), \label{wuz}
\ee
where $u$ solves problem \rf{equ}--\rf{ini} and $M(r,t)$ is the radial  distribution function of $u=u(x,t)$ defined  in \rf{distr-u}. 
Moreover, we replace in formula \rf{wuz} the exponent $\g$ by $\alpha$ if $|x_{t_0}|\ge R_\#$. 

Let us now derive equalities needed in the remainder of this proof. 
Similarly as was in the proof of Theorem \ref{g1}, one can prove that $z(r,t)$ as a function of $r$ attains  its local maximum at $R_{t_0}$. Hence
\be
\frac{\partial}{\partial r} z(R_{t_0},t_0)=0 \ \ \ {\rm and}\ \ \ \frac{\partial^2}{\partial r^2} z(R_{t_0},t_0)\le 0.\label{gradient-}
\ee
By relation \rf{u-rad} we may express $M_r$ in terms of $z=z(.,t)$ in the following way 
\be
M_r=\frac{\partial}{\partial r}(r^{d-\g}z)=(d-\g)r^{d-\g-1}z+r^{d-\g}z_r.\label{der}
\ee 
 Using the definition  of the fractional Laplacian   $(-\Delta)^{\alpha/2}$ and elementary computations, we obtain 
\be 
 \int_{\{|y|<R\}}(-\Delta)^{\alpha/2}u(y,t)\dy 
=\int J_R(x) u(x,t)\dx,
 \label{lapl} 
\ee   
where 
$$
J_R(x)\equiv (-\Delta)^{\alpha/2} \un_{B_R}(x)=R^{-\alpha}j\left(\frac{|x|}{R}\right)\ \ \ {\rm with}\ \ \ j(|x|)=(-\Delta)^{\alpha/2}\un_{B_1}(x).
$$ 
Properties of the function $j(r)$ have been studied above in Lemma \ref{indicator}. 

 For radially symmetric solutions $u(y,t)=u(r,t)$, and $r=|y|$, we have by the Gauss--Stokes theorem, Lemma \ref{potential}, equations  \rf{u-rad} and \rf{der}  
\begin{equation}\label{nonlin}
\begin{split}
-\int_{\{|y|<R\}}\nabla \cdot(u \nabla v) \dy 
-&\int_{\{|y|=R\}}\frac{y}{|y|}\cdot (u\nabla v)\,{\rm d}S\!\!\!\\ 
=&\frac{1}{\sigma_d^2}\int_{\{|x|=R\}} \frac{1}{R}R^{2-2d}M(R,t)M_r(R,t) \,{\rm d}S\\
=&\frac{1}{\sigma_d}R^{1-d}M(R,t)M_r(R,t)\\
=&
 \frac{d-\g}{\sigma_d}R^{d-2\g}z(R,t)^2\\
& +\frac{1}{\sigma_d}R^{d-2\g+1}z(R,t)z_r(R,t).
\end{split}
\end{equation}
Thus, using equations  \rf{gradient-},  \rf{lapl} and \rf{nonlin},  we obtain 
\begin{equation}\label{pochodna}
\begin{split}
\frac{\partial}{\partial t}&z(R_{t_0,}t){\big|_{t=t_0}}\\
=&
R_{t_0}^{\g-d}\frac{\rm d}{\dt}{\big|_{t=t_0}}\int_{\{|y|<R_{t_0}\}}u(y,t)\dy\\ 
 =& R_{t_0}^{\g-d}\left(-\int_{\{|y|<R_{t_0}\}}(-\Delta)^{\alpha/2}u(y,t_0)\dy 
+\frac{d-\g}{\sigma_d}R_{t_0}^{d-2\g}z^2\right) \\ 
=& \lim_{\delta\searrow 0} R_{t_0}^{\g-d}\left(\int_{\{|y|<R_{t_0}-\delta\}}   
u(y,t_0)\left(-R_{t_0}^{-\alpha}j\left(\frac{|y|}{R_{t_0}}\right)  
+R_{t_0}^{-\g}\frac{d-\g}{\sigma_d}z(R_{t_0},t_0)\right)\dy   \right.\\
 &+ 
 \left.
 \int_{\{|y|>R_{t_0}+\delta\}}u(y,t_0)\left(-R_{t_0}^{-\alpha}j\left(\frac{|y|}{R_{t_0}}\right)\right)\dy\right)\\ 
 =&\lim_{\delta\searrow 0} R_{t_0}^{\g-\alpha}\left(\int_{\{|y|<1-\delta\}}u(R_{t_0}y,t_0)\left(-j(|y|)+R_{t_0}^{\alpha-\g}\frac{d-\g}{\sigma_d}z(R_{t_0},t_0)\right)\dy\right. \\
 &- \left.\int_{\{|y|>1+\delta\}}u(R_{t_0}y,t_0)j(|y|)\dy\right). 
\end{split}
\end{equation}
Our ultimate goal is to prove that  
$\frac{\partial}{\partial t}z(R_{t_0},t){\big|_{t=t_0}}<0$ 
 which  implies  that $z(r,t)$ decreases in time  when it hits the barrier $b_I(r)$ at the point $(R_{t_0},t_0)$, and  which is in a contradiction with the fact that $z(r,t)$ attains a local maximum at this point.   
Recalling that $KR_\#^{\alpha-\g}=\eps s$ we consider two cases.

\noindent  {\bf Case 1.} 
We assume that $R_{t_0}\le R_\#$. 
Hence, by the definition of $R_{t_0}$, we obtain that $z(R_{t_0},t_0)=KR_{t_0}^{d-\g}$. 
We will find  the upper bound of the  right-hand side of formula \rf{pochodna} under the pointwise constraint
$$
M(r,t)=
\s_d\int_0^r u(\rho,t)\rho^{d-1}\,{\rm d}\rho\le \frac{\s_d}{d-\g}Kr^{d-\g}\ \ {\rm for\ all\ \ } 0<r<\infty. 
$$ 

\noindent  {\bf Case 2. }
 Now we suppose that $R_{t_0}\ge R_\#$, \   hence $z(R_{t_0},t_0)=\epsilon s$.
We will find the upper bound of the  right-hand side of formula \rf{pochodna} with $\alpha=\g$ under the pointwise constraint
$$
M(r,t)=\s_d\int_0^r u(\rho,t)\rho^{d-1}\,{\rm d}\rho\le \epsilon\frac{\s_d}{d-\alpha}s(\alpha,d)r^{d-\alpha}\ \ {\rm for\ all\ \ } 0<r<\infty. 
$$ 

We deal with both cases simultaneously with the goal to obtain $\frac{\partial}{\partial t}z(R_{t_0},t){\big|_{t=t_0}}<0$. 
We fix $\g\le \alpha$ and, under the constraints either of Case 1 or of Case 2, we  compute the upper bound of the expression 
\begin{equation}\label{I1}
\begin{split}
   \int_0^{R_{t_0}-\delta}\s_d&u(r,t_0)r^{d-1}\left(R_{t_0}^{\alpha-\g}\frac{d-\g}{\sigma_d}z(R_{t_0},t_0)-j(r)\right)\dr \\
 =&\int_0^{R_{t_0}-\delta}M_r(r,t_0)\left(sR_{t_0}^{\alpha-\g}\frac{d-\g}{\sigma_d}z(R_{t_0},t_0)-j(r)\right)\dr  \\
 =& \bigg( M(r,t_0)\left(R_{t_0}^{\alpha-\g}\frac{d-\g}{\sigma_d}z(R_{t_0},t_0)-j(r)\right)\bigg|_0^{R_{t_0}-\delta} \\
&+ \int_0^{R_{t_0}-\delta}(M(r,t_0)r^{\g-d})r^{d-\g}j'(r)\dr \bigg) \\
 \le& \bigg( M(r,t_0)\left(R_{t_0}^{\alpha-\g}\frac{d-\g}{\sigma_d}z(R_{t_0},t_0)-j(r)\right)\bigg|_0^{R_{t_0}-\delta}\\
&+\int_0^{R_{t_0}-\delta}z(R_{t_0},t_0)j'(r)r^{d-\g}\dr \bigg) \\
\le & \s_d s(\alpha,d)\left(\frac{s(\alpha,d)}{d-\alpha} - (d-\g)z(R_{t_0},t_0)\int_0^{R_{t_0}-\delta}r^{d-\g-1}j(r)\dr\right)\\
&+ j(R_{t_0}-\delta)(-M(R_{t_0}-\delta,t_0)+M(R_{t_0},t_0)).
\end{split}
\end{equation}  
Similarly, we have the upper bound for the integral over large $|y|$
 \begin{equation}\label{I2} 
 \begin{split}
 -\int_{R_{t_0}+\delta}^\infty M_r(r,t_0)&j(r)\dr \\
 =&  \bigg(-M(r,t_0)j(r)\bigg|_{R_{t_0}+\delta}^\infty +\int_{R_{t_0}+\delta}^\infty (M(r,t_0)r^{\g-d})r^{d-\g}j'(r)\dr\bigg)  \\
 \le &  \bigg(-M(r,t_0)j(r)\bigg|_{R_{t_0}+\delta}^\infty +\int_{R_{t_0}+\delta}^\infty z(R_{t_0},t_0)r^{d-\g}j'(r)\dr \bigg) \\
=&- (d-\g)z(R_{t_0},t_0)\int_{R_{t_0}+\delta}^\infty r^{d-\g-1}j(r)\dr\\ 
&+j(R_{t_0}+\delta)(M(R_{t_0}+\delta,t_0)-M(R_{t_0},t_0)).  
 \end{split}
 \end{equation}
Since $u\in {\mathcal C}^2, R_{t_0}>0$, we have by the asymptotic formula \rf{distrj} that 
\begin{equation*}
\begin{split} 
\lim_{\delta\searrow 0}\Big(&j(R_{t_0}-\delta)\big(-M(R_{t_0}-\delta,t_0)+M(R_{t_0},t_0)\big) \\
&+j(R_{t_0}+\delta)\big(M(R_{t_0}+\delta,t_0)-M(R_{t_0},t_0)\big)\Big)=0.
\end{split}
\end{equation*}

By the above computations, 
 the upper bound for the derivative $\frac{\partial}{\partial t}z(x_{t_0},t)\big|_{t=t_0}$  is obtained by evaluating \rf{pochodna} for $u(x,t)=K|x|^{-\g}$  in Case 1, and  for $u(x,t)=\epsilon s(\alpha,d)|x|^{-\alpha}$ in Case 2. 
Let us calculate the right-hand side of \rf{pochodna} at these functions. 

\noindent In Case 1, by formula \rf{stala}  
and \rf{lap-alpha-gamma},  we obtain that for $2\alpha<d$  
\begin{equation*}
\begin{split} 
-\langle (-\Delta)^{\alpha/2}\un_{B_{R_{t_0}}}, |x|^{-\g}\rangle 
&= -\left\langle \un_{B_{R_{t_0}}}, (-\Delta)^{-\alpha/2}|x|^{-\g}\right\rangle\nonumber\\  
 &= -\left\langle \un_{B_{R_{t_0}}},\tilde C_{\g,\alpha}\frac{d-2\alpha}{d-\alpha}s(\alpha,d)   |x|^{-(\alpha+\g)} \right\rangle \nonumber\\   
 &= -\tilde C_{\g,\alpha}\s_d \frac{d-2\alpha}{d-\alpha}s(\alpha,d)\int_0^{R_{t_0}} r^{d-(\alpha+\g)-1}\dr \nonumber\\
 &=- \tilde C_{\g,\alpha}\frac{\s_d(d-2\alpha)}{(d-\alpha)(d-\alpha-\g)} s(\alpha,d)R_{t_0}^{d-(\alpha+\g)},\nonumber
\end{split}
\end{equation*}
 where   the constants 
 $$\tilde C_{\g,\alpha}=\frac{\Gamma\left(\frac{d-\gamma}{2}\right)\Gamma\left(\frac{\alpha+\gamma}{2}\right)}{\Gamma\left(\frac{d-\alpha-\gamma}{2}\right)\Gamma\left(\frac{\alpha}{2}\right)} \times \frac{\Gamma\left(\frac{d}{2}-\alpha\right)\Gamma\left(\frac{\alpha}{2}\right)}{\Gamma\left(\frac{d-\alpha}{2}\right)\Gamma(\alpha)} $$ 
obviously satisfy  $\lim_{\g\rightarrow \alpha}\tilde C_{\g,\alpha}=1$. 

 Thus, applying the third of equalities leading to  \rf{pochodna} with $u(x,t)=K\frac{d-\g}{\s_d}|x|^{-\g}$ and the corresponding  $z(r,t)=K$, we get 
 \begin{equation}
 \begin{split}
 \frac{\partial}{\partial t}z(R_{t_0},t){\big|_{t=t_0}}
\le& -\langle(-\Delta)^{\alpha/2}\un_{B_{R_{t_0}}},u\rangle +\frac{d-\g}{\sigma_d}R_{t_0}^{d-2\g}z^2\\
=&- \tilde C_{\g,\alpha}\frac{\s_d(d-2\alpha)K}{(d-\alpha)(d-\alpha-\g)}s(\alpha,d)\frac{d-\g}{\s_d}R_{t_0}^{d-(\alpha+\g)}\\
&\qquad +
\frac{d-\g}{\sigma_d}R_{t_0}^{d-2\g}\frac{(K\s_d)^2}{(d-\g)^2}.
\end{split}
\label{3stars}
\end{equation} 
Now, we look at the sign of the right-hand side in \rf{3stars} in Case 1 and in Case 2 separately. 

\noindent  In Case 1, we have the inequality 
$KR_{t_0}^{\alpha-\g}\le \epsilon s(\alpha,d)\frac{d-\g}{d-\alpha}$ which holds true  if $R_{t_0}\le R_\#$.  
Thus, for $\g=\alpha$,  the inequality $K<\eps\frac{d-\alpha}{\s_d}s(\alpha,d)$ follows, hence the estimate 
$$
\frac{K}{d-\alpha}R_{t_0}^{d-2\alpha}\big(-(d-\alpha)s(\alpha,d)+K\s_d\big)<0
$$
holds. 
By the continuity argument applied to \rf{3stars}, there exists $\g_0>0$ such that for all $\g\in(\g_0,\alpha]$, we still have $\frac{\partial}{\partial t}z(R_{t_0},t){\big|_{t=t_0}}<0$. 

\noindent  In Case 2, we use the function $u(x,t)=\eps s(\alpha,d)|x|^{-\alpha}$ and $\g=\alpha$ in the calculations leading to \rf{3stars}, which gives the following counterpart of inequality \rf{3stars} 
$$
\frac{\partial}{\partial t}z(R_{t_0},t){\big|_{t=t_0}}\le \s_d\frac{1}{d-\alpha}R_{t_0}^{d-2\alpha}\eps s(\alpha,d)^2(-1+\eps)<0
$$
for each $\eps\in(0,1)$. 
This completes the proof of Theorem \ref{g2}. 
\qed

\section
{Global-in-time solutions for $\alpha\in(0,1)$} \label{smooth-sol}

We are in a position to prove Theorem \ref{mainglobth} and we  proceed in the usual way: we construct local-in-time solutions which can be then extended globally in time due to the comparison principle proved in Section \ref{5}.

First, we consider a doubly regularized (a parabolic regularization together with a~smoothing of the  nonlinearity) counterpart of problem  \rf{equ}--\rf{ini}
\begin{align}
\label{reg-reg} u_t+(-\Delta)^{\alpha/2}u+\nabla\cdot(u\fie\ast\nabla v)&=\delta\Delta u,\\
\qquad \Delta v+u&=0,
\\
u(x,0)&=u_0(x),\label{ini-reg}
\end{align}
with a constant  $\delta>0$ and where $\fie$ is a smooth approximation of the Dirac $\delta_0$ measure ({\em e.g.} $\fie(x)=\e^{-d}\phi\left({|x|}/{\e}\right)$ with $0\le \phi\in{\mathcal C}_c^\infty(\R)$, $\phi(x)=1$ for $|x|\le 1$), $\e>0$,
$\int \phi\dx=1$.

\begin{lemma}\label{lem:reg1} 
Suppose that a function $u_0\ge 0$ is  radial and satisfies 
 $u_0\in W^{4,n}(\R^d)\cap L^1(\R^d)$ with the exponent $n=2p>d/4$ for some $p\in \N$. 
Then,  problem  \rf{reg-reg}--\rf{ini-reg}  supplemented with such an initial condition possesses a unique nonnegative, radial solution $u=u_{\e,\delta}\in {\mathcal C^{1}}([0,T_{\e,\delta}], W^{4,n}(\R^d)\cap L^1(\R^d))$ on a time interval with $T_{\e,\delta}>0$.  
\end{lemma}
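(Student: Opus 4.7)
My plan is to prove Lemma \ref{lem:reg1} by applying the Banach contraction mapping principle to the mild formulation of \rf{reg-reg}--\rf{ini-reg}, exploiting the smoothing provided by both $\delta\Delta$ and the mollification by $\fie$. Introduce the semigroup $S_{\ed}(t)$ with Fourier symbol $e^{-t(\delta|\xi|^2+|\xi|^\alpha)}$, which maps $W^{k,n}(\R^d)$ continuously into $W^{k+j,n}(\R^d)$ for any $j\ge 0$ with operator norm bounded by $C_\delta t^{-j/2}$. Recover $v$ from $u$ via $v=E_d\ast u$ (with $E_d$ the Newtonian potential, as in Lemma \ref{potential}), so that
\be
\fie\ast\nabla v=(\fie\ast\nabla E_d)\ast u,\nonumber
\ee
where $\fie\ast\nabla E_d\in {\mathcal C}^\infty(\R^d)$ decays at infinity. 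Thus for every $k\ge 0$ one has an $\e$-dependent bound $\|\fie\ast\nabla v\|_{W^{k,\infty}}\le C_\e(\|u\|_1+\|u\|_\infty)$. The mild formulation reads
\be
u(t)=S_{\ed}(t)u_0-\int_0^tS_{\ed}(t-s)\na\cdot\big(u(s)\,\fie\ast\nabla v(s)\big)\ds.\nonumber
\ee

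Second, the assumption $n=2p>d/4$, i.e.\ $4n>d$, yields $W^{4,n}(\R^d)\hookrightarrow L^\infty(\R^d)$ and makes $W^{4,n}(\R^d)$ a Banach algebra. Combined with the previous smoothing bound on $\fie\ast\nabla v$, this gives the bilinear estimate
\be
\|\na\cdot(u\,\fie\ast\nabla v)\|_{W^{3,n}}\le C_{\e}\,\|u\|_{W^{4,n}}\,(\|u\|_1+\|u\|_{W^{4,n}}),\nonumber
\ee
and an analogous Lipschitz bound for differences. Because $S_{\ed}(t-s)$ maps $W^{3,n}$ into $W^{4,n}$ with an integrable singularity $C_\delta(t-s)^{-1/2}$, a standard contraction argument on a ball in
$$X_T=\mathcal C([0,T];W^{4,n}(\R^d)\cap L^1(\R^d))$$
yields, for $T=T_{\ed}>0$ small enough, a unique mild solution; parabolic regularization and bootstrap then upgrade it to a ${\mathcal C}^1$-in-time solution valued in $W^{4,n}\cap L^1$.

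Third, the additional qualitative properties follow from soft arguments. Radiality of $u$ is a consequence of uniqueness: every rotation of $u$ solves the same Cauchy problem because $\fie$, $E_d$ and $(-\Delta)^{\alpha/2}$ are rotation invariant. Nonnegativity follows from the weak maximum principle for the linearized equation $u_t-\delta\Delta u+(-\Delta)^{\alpha/2}u+\na\cdot(u b)=0$ with the smooth bounded drift $b=\fie\ast\nabla v$: both $\delta\Delta$ and $-(-\Delta)^{\alpha/2}$ generate positivity-preserving semigroups, so a standard comparison with the zero solution (applied to $u_-=\max\{-u,0\}$) gives $u\ge 0$ from $u_0\ge 0$. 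Finally, integrating \rf{reg-reg} over $\R^d$ and using the decay of $\fie\ast\nabla v$ shows $\|u(t)\|_1=\|u_0\|_1$, hence $u\in L^1$ is preserved.

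The main obstacle is verifying the bilinear estimate cleanly and matching it with the temporal integrability of $S_{\ed}(t-s)$ to close the fixed point argument; once the algebra structure of $W^{4,n}$ and the $\e$-dependent smoothing of $\fie\ast\nabla E_d$ are identified, the remainder is routine.
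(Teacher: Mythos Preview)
Your proposal is correct and follows exactly the approach the paper indicates: the paper's own proof is a two-line remark that the construction is standard via the Duhamel formula for \rf{reg-reg}--\rf{ini-reg}, referring to Section~\ref{smooth-sol-} for a parallel computation. You have simply supplied the details the paper omits --- the semigroup smoothing from $\delta\Delta$, the $\e$-dependent $W^{k,\infty}$ bounds on $\fie\ast\nabla E_d$ (cf.\ the decomposition \rf{kern} used later in Lemma~\ref{apr}), the algebra property of $W^{4,n}$ for $4n>d$, and the soft arguments for radiality, nonnegativity and mass conservation. One cosmetic point: the linear semigroup generated by $\delta\Delta-(-\Delta)^{\alpha/2}$ does not depend on $\e$, so writing $S_{\ed}$ is slightly misleading; the $\e$-dependence enters only through the nonlinearity.
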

\begin{proof}
A construction of such local-in-time solutions is standard and it can be based on  the Duhamel formula ({\em  cf.} \rf{Duh} below) written for the initial-value problem for system \rf{reg-reg}--\rf{ini-reg}. See Section \ref{smooth-sol-} for a counterpart of such reasoning. 
\end{proof}
 
Note, however, that the length of the interval of the existence of the solution
constructed in Lemma  \ref{lem:reg1}  depends on $\e,\delta$.  
The following lemma implies immediately  that such a~solution can be continued to a common interval $[0,T_0]$ with $T_0>0$ independent of $\e>0$ and of $\delta>0$.

 \begin{lemma}\label{apr}
Let $A>0$, $s\in{\mathbb N}$, $p\in{\mathbb N}$ be such that $2p>d/s$.
 Consider a solutions   $u\in{\mathcal C}([0,T],W^{s,2p}(\R^d))$  
of the regularized problem \rf{reg-reg}--\rf{ini-reg}.

\begin{itemize}

\item[(i)]
 Then, for all $t\in [0,T]$,  the  inequality 
\be
\|u(t)\|^{2p}_{W^{s,2p}}\le \|u_0\|^{2p}_{W^{s,2p}}+C\int_0^T\|u(\tau)\|^{2p+1}_{W^{s,2p}}\dta\label{s-p}
\ee
holds true with a constant  $C$ independent of  $\e,\delta$. 
In particular, for an initial datum satisfying 
 $\|u_0\|_{W^{s,2p}}\le A$, we have  the estimate $\|u(t)\|_{W^{s,2p}}\le 2A$  for all $t \in [0,{1}/(4AC)]$.

\item[(ii)] Let $\|u\|_{{\mathcal C}([0,T]\times \R^d)}$ be  controlled {\em apriori}. Then, there exist increasing functions $C_{s,A}(t)<\infty$ on $[0,\infty)$  such that inequality 
\be
\|u(t)\|^{2p}_{W^{s,2p}}\le\|u_0\|^{2p}_{W^{s,2p}} +\int_0^tC_{s,A}(\tau)\|u(\tau)\|^{2p}_{W^{s,2p}}\dta\label{Ws-p}
\ee
holds for $t\in[0,T]$.
\end{itemize}
 \end{lemma}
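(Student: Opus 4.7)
The plan is to differentiate $\|u(t)\|_{W^{s,2p}}^{2p}$ in time using the regularized equation \rf{reg-reg}, discard the dissipative contributions via positivity, and control the chemotactic drift by constants that do not depend on $\e$ or $\delta$. For a fixed multi-index $\beta$ with $|\beta|\le s$, apply $\p^\beta$ to \rf{reg-reg} and integrate against $(\p^\beta u)^{2p-1}$. The fractional Laplacian contribution is nonnegative by the Stroock--Varopoulos identity
\begin{equation*}
\int (-\Delta)^{\alpha/2}w\cdot w^{2p-1}\dx = \frac{\mathcal A}{2}\iint \frac{(w(x)-w(y))(w(x)^{2p-1}-w(y)^{2p-1})}{|x-y|^{d+\alpha}}\dx\dy\ge 0
\end{equation*}
applied to $w=\p^\beta u$, and the $-\delta\Delta$ regularization contributes $\delta(2p-1)\int (\p^\beta u)^{2p-2}|\na\p^\beta u|^2\dx\ge 0$; both terms are dropped. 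It remains to bound only the drift integral $\int \p^\beta[\na\cdot(u\,\fie*\na v)](\p^\beta u)^{2p-1}\dx$.

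For part (i), the hypothesis $2p>d/s$ yields the Sobolev embedding $W^{s,2p}(\R^d)\hookrightarrow L^\infty(\R^d)$, so by interpolation $\|u\|_q\le C\|u\|_{W^{s,2p}}$ for every $q\in[2p,\infty]$; in particular for $q=d$. Hardy--Littlewood--Sobolev applied to $v=(-\Delta)^{-1}u$ then gives $\|\na v\|_\infty\le C\|u\|_d\le C\|u\|_{W^{s,2p}}$, while Calder\'on--Zygmund theory yields $\|D^k\na v\|_{2p}\le C\|D^{k-1}u\|_{2p}$ for $k\ge 1$. Since $\fie*$ is an $L^q$-contraction for every $q$, these bounds pass to $\fie*\na v$ with the \emph{same} constants, uniformly in $\e$. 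Using $\na\cdot(u\,\fie*\na v) = -u(\fie*u) + \na u\cdot(\fie*\na v)$ and Leibniz,
\begin{equation*}
\p^\beta[\na\cdot(u\,\fie*\na v)] = -\!\!\sum_{\beta'+\beta''=\beta}\!\!C_{\beta',\beta''}\p^{\beta'}u\,(\fie*\p^{\beta''}u) + \sum_{\beta'+\beta''=\beta}\!\!C_{\beta',\beta''}\na\p^{\beta'}u\cdot(\fie*\p^{\beta''}\na v),
\end{equation*}
and the H\"older triple $(2p,\infty,2p/(2p-1))$ bounds each term by $C\|u\|_{W^{s,2p}}^{2p+1}$. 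The one subterm in which an order-$(s{+}1)$ derivative of $u$ appears (the case $|\beta'|=s$ in the second sum) is handled instead by integration by parts:
\begin{equation*}
\int\na\p^\beta u\cdot(\fie*\na v)(\p^\beta u)^{2p-1}\dx = \frac{1}{2p}\int (\p^\beta u)^{2p}(\fie*u)\dx \le \frac{1}{2p}\|u\|_\infty\|\p^\beta u\|_{2p}^{2p},
\end{equation*}
using $\na\cdot\na v=-u$. Summing over $|\beta|\le s$ and integrating in time produces \rf{s-p}. The lifespan assertion follows from the ODE comparison: if $f(t)=\|u(t)\|_{W^{s,2p}}^{2p}\le A^{2p}+C\int_0^tf^{(2p+1)/(2p)}\dta$, then $f$ is dominated by the solution of $g'=Cg^{(2p+1)/(2p)}$ with $g(0)=A^{2p}$, which stays below $(2A)^{2p}$ on $[0,(4AC)^{-1}]$.

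For part (ii), the a priori bound $\|u\|_{L^\infty([0,T]\times\R^d)}\le M_A$ together with interpolation between $L^\infty$ and $L^{2p}$ (the latter controlled on every finite interval by part (i)) gives $\|\fie*\na v\|_\infty\le C_A$ uniformly in $\e$ via the Riesz-potential representation. The same Leibniz expansion is estimated by pulling the $L^\infty$ factor out rather than using an $L^d$ norm of $u$; the dangerous top-order subterm is again controlled by $(M_A/2p)\|\p^\beta u\|_{2p}^{2p}$ through the identity $\na\cdot(\fie*\na v)=-\fie*u$, while every other subterm is dominated by $C_{s,A}(t)\|u\|_{W^{s,2p}}^{2p}$. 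Summing yields \rf{Ws-p}, and Gronwall's inequality prevents finite-time blowup.

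The main technical obstacle is ensuring $\e,\delta$-uniformity throughout. Independence from $\delta$ is immediate once the nonnegative $-\delta\Delta$ term is discarded. Independence from $\e$ rests entirely on the fact that convolution with $\fie$ is an $L^q$-contraction for every $q\in[1,\infty]$, so no $\e$-dependent constant ever appears in the trilinear estimates; the only spot where this could have failed is the top-order subterm, which is precisely the one saved by the cancellation $\na\cdot(\fie*\na v)=-\fie*u$.
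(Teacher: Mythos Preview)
Your overall strategy matches the paper's: differentiate the $W^{s,2p}$ energy, drop both dissipative contributions by Stroock--Varopoulos and by the sign of $-\delta\Delta$, expand the drift by Leibniz, and rescue the single top-order subterm via $\nabla\cdot(\fie*\nabla v)=-\fie*u$ after integration by parts. The mechanism for $\e,\delta$-independence is also the same.

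There is one concrete error and one genuine divergence from the paper. The error: the bound $\|\nabla v\|_\infty\le C\|u\|_d$ you attribute to Hardy--Littlewood--Sobolev is the \emph{failing} endpoint --- the Riesz potential $I_1=\nabla(-\Delta)^{-1}$ does not map $L^d(\R^d)$ into $L^\infty(\R^d)$. Control of $\|\nabla v\|_\infty$ requires integrability of $u$ on both sides of the critical exponent $d$. The paper obtains this by splitting the kernel $\fie*\nabla E_d=K_0+K_\infty$ into a compactly supported piece $K_0\in L^{q}$ for every $q<d/(d-1)$, paired with $\|u\|_\infty$ (available from $W^{s,2p}\hookrightarrow L^\infty$), and a smooth decaying tail $K_\infty$ on which all derivatives are absorbed and then paired with $\|u\|_1$ (from mass conservation). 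Your Calder\'on--Zygmund estimate for $k\ge1$ is correct but does not cover this $k=0$ term. The divergence concerns part~(ii): the paper states that radial symmetry is \emph{crucial} there, since Lemma~\ref{potential} gives $\nabla v(x)=-\s_d^{-1}x|x|^{-d}M(|x|)$, and differentiating this explicit formula yields pointwise $L^\infty$ bounds on $\partial_i\partial_k v$ (and higher derivatives by recurrence) directly from $\|u\|_\infty+\|u\|_1$, bypassing the failure of the Riesz transforms on $L^\infty$. Your symmetry-free route places $\partial^{\beta''}\nabla v$ in $L^{2p}$ via Calder\'on--Zygmund, which forces the companion factor $\nabla\partial^{\beta'}u$ into $L^\infty$; this is not available for all admissible $\beta'$ from $W^{s,2p}$ and the a~priori $\|u\|_\infty$ control alone. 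Without the radial structure (or a careful Gagliardo--Nirenberg interpolation that you have not supplied) the linear Gronwall inequality \rf{Ws-p} does not close as written.
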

 
  \begin{proof}  Here, we denote 
  $\p^\beta =\left(\frac{\partial^{\beta_1}}{\partial x_1^{\beta_1}}, \dots , \frac{\partial^{\beta_d}}{\partial x_d^{\beta_d}}\right) $ for each $\beta=(\beta_1, \dots , \beta_d)\in \N^d$, $|\beta|\le s$. 
  
{\it Item (i).}  Using equation \rf{reg-reg} and the Leibniz rule (skipping the integrals of good sign coming from all the diffusion terms) we have got
 \begin{equation}\label{Lei}
 \begin{split}
\frac{\rm d}{\dt}\int(\p^\beta u)^{2p}\dx&=\int(\p^\beta u)^{2p-1}\p_t\p^\beta u \dx \le\int(\p^\beta u)^{2p-1}\p^\beta(\fie\ast\nabla v)\dx \\
&\approx \sum_j\int(\p^\beta u)^{2p-1}\nabla(\p^ju)\cdot\nabla(\p^{\beta-j}\fie\ast v)\dx.
\end{split}
\end{equation}
Here, to obtain second inequality, we have skipped  integrals of good sign coming from all the diffusion terms because
$$
-\int(\p^\beta u)^{2p-1}(-\Delta)^{\alpha/2} (\p^\beta u) \dx\leq 0
$$
for each $p\ge 1$ and $\alpha\in (0,2]$ by the Stroock--Varopoulos inequality, see {\it e.g.} \cite[Prop. 3.1]{BIK}.    
Now, we estimate  the terms for $ j<\beta$ and $j=\beta$, separately. 

For $ j<\beta$, we use the decomposition  $\fie\ast v=K_0\ast u+K_\infty\ast u$ with the kernels 
\be
K_0(x)=\phi(x)(\fie\ast\nabla(-\Delta)^{-1})(x),\ \ K_\infty(x)=(1-\phi(x))(\fie\ast\nabla(-\Delta)^{-1}(x)).\label{kern}
\ee
Evidently, we have $K_0\in L^q(\R^d)$ if  $q<\frac{d}{d-1}$. 
Thus, for $u\in L^\infty$, we obtain  that $K_0\ast u\in L^\infty$ together with 
the estimate  
$$
\|K_\infty\ast \p^\beta u\|_\infty \le \|\p^\beta K_\infty\|_\infty\|u\|_1\nonumber
$$
for each multiindex $\beta=(\beta_1, \dots , \beta_d)$ with  integers $\beta_i\ge 0$. 
By an elementary argument, we can also show that  $K_0\ast u\in L^\infty$ if $u\in W^{s,2p}$ with  suitably large $s$, $p$. 

For $j=\beta$, we proceed as follows   
  \bea
\left|\int(\p^\beta u)^{2p-1}\nabla(\p^\beta u)\cdot\nabla( \fie\ast v)\dx\right| &=&\left|\int\nabla(\p^\beta u)^{2p}\cdot\nabla(\fie\ast v)\dx\right|\nonumber\\
&\le& \left|\int(\p^\beta u)^{2p}\Delta(\fie\ast v)\dx\right|\nonumber\\
&=&\left|\int(\p^\beta u)^{2p}\fie\ast u\dx\right|.\nonumber
\eea
Using the estimate  $\|u\|_\infty\le C\|u\|_{W^{s,n}}$  (valid  for $s>d/n$) in  the computations above, we obtain the inequality 
$$
\frac{\rm d}{\dt}\|u\|_{W^{s,n}}^n\le C\|u\|_{W^{s,n}}^{n+1},
$$
and thus 
\be
\|u(t)\|_{W^{s,n}}^n\le \|u_0\|_{W^{s,n}}^n +C\int_0^t\|u(\tau)\|^{n+1}_{W^{s,n}}\dta.\label{wzor1}
\ee
Now, if $\sup_{0\le t\le T}\|u(t)\|_{W^{s,n}}\le 2A$, then 
$$
\sup_{0\le t\le T}\|u(t)\|_{W^{s,n}}^n\le \|u_0\|_{W^{s,n}}^n+C(2A)^{n+1}t \qquad \text{for all} \quad t\in [0,T].$$ 
Next, choosing  $T>0$ as the first moment when $\|u(T)\|_{W^{s,n}}= 2A$, we obtain the estimate 
$$
(2A)^n\le A^n+CT(2A)^{n+1}
$$
which gives immediately  that $T\ge \frac{1}{4AC}$. 
 
 {\it Item (ii).}  Let us estimate again a generic term in \rf{Lei} 
obtained from the Leibniz formula 
 $$
 J_{\beta,j}=\left|\int(\p^\beta u)^{2p-1}\p^{\beta-j}\nabla u\cdot\nabla(\p^j v) \dx\right|.
 $$ 
Here, the assumption on the radial symmetry of $u$ is crucial 
because by  Lemma~\ref{potential}, we have got the equations
$$\p_i \p_kv=\p_i \p_k(-\Delta)^{-1}u =\p_i\left(\frac{x_k}{|x|^d}M(|x|)\right)$$ obtained from equality $\nabla v(x)\cdot x=-\frac1{\sigma_d}|x|^{2-d}M(|x|)$ (see Lemma \ref{potential}) with $u(x)=\frac{1}{\sigma_d}R^{1-d}M'(R)$, $|x|=R$ (this is identity \rf{u-rad}), are bounded since $u$ is {\em apriori} ${\mathcal C}^2$. 

For $s=1$, $|j|=1$, we have $|\p_i\p_k v|\le C(\|u\|_\infty+\|u\|_1)$, and consequently $J_{\beta,j}\le C\|u\|^{2p}_{W^{s,2p}}(\|u\|_\infty+\|u\|_1)$.

For $|j|=1$, $s>1$, we recall by formulas \rf{kern} that $\nabla(\p v)=\nabla(-\Delta)^{-1}\p u=K_0\ast\p u+K_\infty\ast\p u$. 
By the recurrence assumption $\p u\in L^{2p}$ with the norm bounded by $C_{1,A}(t)$, so that $\|K_0\ast \p v\|_{ L^\infty}\le CC_{1,A}(t)$. 
Similarly, we get $|K_\infty\ast\p v|=|\p K_\infty\ast u|\le \|u\|_1\|\p K_\infty\|_\infty$.

For $|j|\ge 2$, $s\ge 2$, by recurrence, we infer that $\|\nabla\p^jv\|_{W^{|j|-1,2p}}\le C_{s-1}(t)$ and 
$$\|\p^{\beta-j}\nabla(-\Delta)^{-1}u\|_\infty\le C\|u\|_{W^{s-1,2p}},$$ 
and we are done. 
 \end{proof}

In the following lemma, we pass to the limit $\e\to 0$ and $\delta\to 0$ in the regularized problem \rf{reg-reg}--\rf{ini-reg}.

 \begin{lemma}\label{let}
 Let $\alpha<1$, $n=2p>d+1$,  $p\in {\mathbb N}$, and  $A>0$.
For every $u_0\in L^1(\R^d)\cap W^{4,n}(\R^d)$ such that  $u_0\ge 0$ and $\|u_0\|_{ W^{4,n}(\R^d)}\le A$, 
 there exists a solution $u $ of problem \rf{equ}--\rf{ini}  the function  $u$ is defined on $[0,T_0],$  where $T_0= 1/(4AC)$ is defined in Lemma~\ref{apr}.
 This solution satisfies
\be 
u \in{\mathcal X}_{T_0}\equiv{\mathcal C}([0,T_0],W^{4,n}(\R^d))\cap {\mathcal C}^1([0,T_0],W^{3,n}(\R^d)).\label{space-X}
\ee
Moreover, we have $\sup_{0\le t\le T_0}\|u(t)\|_{W^{4,n}(\R^d)}\le 2A$.
 \end{lemma}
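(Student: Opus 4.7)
The plan is to start from the regularized solutions $u_{\e,\delta}$ produced by Lemma \ref{lem:reg1}, extend them to a common time interval independent of $\e$ and $\delta$ using the \emph{apriori} estimates of Lemma \ref{apr}(i), and then pass to the limit $\e,\delta\searrow 0$ by a compactness argument.

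\emph{Step 1: Uniform extension to $[0,T_0]$.} For each fixed $\e,\delta>0$, Lemma \ref{lem:reg1} yields a local solution $u_{\e,\delta}\in \mathcal{C}^1([0,T_{\e,\delta}],W^{4,n}\cap L^1)$. Applying Lemma \ref{apr}(i) with $s=4$ and $2p=n>d+1$ gives the differential inequality \rf{s-p} with a constant $C$ independent of $\e,\delta$. Since the initial data satisfies $\|u_0\|_{W^{4,n}}\le A$, the lemma produces the uniform bound $\|u_{\e,\delta}(t)\|_{W^{4,n}}\le 2A$ on $[0,T_0]$ with $T_0=1/(4AC)$. Because the local existence time in Lemma \ref{lem:reg1} depends only on the $W^{4,n}$ norm of the current state, a standard continuation argument shows that every $u_{\e,\delta}$ extends to all of $[0,T_0]$ with this uniform bound.

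\emph{Step 2: Control of time derivatives.} From equation \rf{reg-reg} we estimate each term of $\p_t u_{\e,\delta}$ separately. The fractional dissipation $(-\Delta)^{\alpha/2}u_{\e,\delta}$ is bounded in $W^{4-\alpha,n}\subset W^{3,n}$ uniformly. The nonlinear term $\nabla\cdot(u_{\e,\delta}\,\fie\ast\nabla v_{\e,\delta})$ is bounded in $W^{3,n}$ using the algebra structure of $W^{3,n}$ (since $3n>d$) together with the decomposition \rf{kern} of the kernel into $K_0+K_\infty$, exactly as in the proof of Lemma \ref{apr}: the $L^1$ part controls $K_\infty\ast\p u$ and the Sobolev embedding controls $K_0\ast\p u$. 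The artificial viscosity $\delta\Delta u_{\e,\delta}$ is bounded by $2A\delta$ in $W^{2,n}$ and so vanishes as $\delta\searrow 0$.

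\emph{Step 3: Compactness and identification of the limit.} The uniform bound in $L^\infty([0,T_0],W^{4,n})$ combined with the uniform bound of $\p_t u_{\e,\delta}$ in $L^\infty([0,T_0],W^{3,n})$ (the $\delta\Delta$ term being treated as a vanishing remainder) allows one to apply the Aubin--Lions lemma and extract a subsequence converging to some $u$ strongly in $\mathcal{C}([0,T_0],W^{3,n}_{\rm loc})$ and weakly-$*$ in $L^\infty([0,T_0],W^{4,n})$. Standard mollifier estimates give $\fie\ast\nabla v_{\e,\delta}\to \nabla v$ locally uniformly, where $\nabla v=-\nabla(-\Delta)^{-1}u$; the radial symmetry and Lemma \ref{potential} provide the pointwise control of $\nabla v_{\e,\delta}$ needed to justify this identification. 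Passing to the limit in each term of \rf{reg-reg} shows that $u$ solves \rf{equ}--\rf{ini}. Nonnegativity is preserved by the strong convergence, and weak-$*$ lower semicontinuity gives $\|u(t)\|_{W^{4,n}}\le 2A$ on $[0,T_0]$. Finally, the equation itself upgrades $u$ to $\mathcal{C}([0,T_0],W^{4,n})\cap\mathcal{C}^1([0,T_0],W^{3,n})$ and propagates the $L^1$ bound.

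The main obstacle is the simultaneous limit in the nonlinear flux $u_{\e,\delta}\,\fie\ast\nabla v_{\e,\delta}$: one needs strong convergence of \emph{both} factors. This is where the radial symmetry and the kernel splitting \rf{kern} are essential, since they turn the inherently nonlocal drift into a quantity controlled by $\|u_{\e,\delta}\|_{W^{4,n}}$ and $\|u_{\e,\delta}\|_1$, both of which are uniform in $\e,\delta$.
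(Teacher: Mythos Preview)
Your proposal is correct and follows essentially the same strategy as the paper: start from the regularized solutions of Lemma~\ref{lem:reg1}, use the uniform $W^{4,n}$ bound from Lemma~\ref{apr}(i) and a continuation argument to reach the common interval $[0,T_0]$, then extract a convergent subsequence by compactness and pass to the limit. The paper's own proof is considerably more terse---it simply invokes ``by compactness'' and the embedding into $\mathcal C^2$---whereas you spell out the Aubin--Lions machinery, the control of $\partial_t u_{\e,\delta}$ term by term, and the kernel splitting \rf{kern} needed for the drift; but the underlying route is the same. One small imprecision: in Step~2 you place $\delta\Delta u_{\e,\delta}$ in $W^{2,n}$ (with bound $2A\delta$), not in $W^{3,n}$, so the uniform-in-$\delta$ bound on $\partial_t u_{\e,\delta}$ really lives in $W^{2,n}$; this is still enough for Aubin--Lions with the triple $W^{4,n}\hookrightarrow W^{3,n}_{\rm loc}\hookrightarrow W^{2,n}$, so the argument goes through unchanged.
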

 
\begin{proof}
Let $\|u_0\|_{W^{s,n}}\le A$ for some $A>0$ and $n\in{\mathbb N}$. 
Suppose that a solution $u_{\e,\delta}$ exists on the interval $[0,kT_{\e,\delta}]$, $k\in{\mathbb N}$, with $u_{\e,\delta}\in{\mathcal C}([0,kT_{\e,\delta}], W^{4,n}(\R^d)\cap L^1(\R^d))$, $T_{\e,\delta}>0$ being the common existence time for $\|u_0\|_{W^{s,n}\cap L^1}\le 2A$ with $\|u(.,{T_{\e,\delta}})\|_{W^{s,n}\cap L^1}\le 4A$.  
 If $\sup_{0\le s\le kT_{\e,\delta}}\|u(.,s)\|_{W^{4,n}\cap L^1}\le 2A$, then this solution can be continued onto $[0,(k+1)T_{\e,\delta}]$. 
 By Lemma \ref{apr} we have  $\sup_{0\le s\le kT_{\e,\delta}}\|u(.,s)\|_{W^{4,n}\cap L^1}\le 2A$  for $ kT_{\e,\delta}\le T_0=\frac{1}{2CA}$  so {\em independently} of $\e>0$,$\delta>0$. Assume $2p>d+1$.
 By compactness, we are able to extract a subsequence $u_{\e_j}(.,t)\in W^{3,n}\cap L^1$ which is in $\mathcal C^2$, and the limiting function solves system \rf{equ}--\rf{eqv} with $u_0\in W^{4,n}(\R^d)\cap L^1(\R^d)$. 
 \end{proof} 

We also need a technical lemma on a decay property of  radial solutions.  
  
  \begin{lemma}\label{x-decay}
  Suppose that $u=u(x,t)\ge 0$ is a radial solution of system \rf{equ}--\rf{eqv} with $u_0$ satisfying bound \rf{zalo-1} with a sufficiently small $\eps>0$. 
 Moreover, suppose that $u$ satisfies $\int u(x,t)\dx \le M$, and the estimates 
\be
\|\partial^\beta u(t)\|_p+\|u(t)\|_p\le C(t),\label{Sob1}
\ee
with $|\beta|\le n$, some $C(t)$ and a sufficiently large fixed $n$. 
Then $\lim_{x\to\infty}|x|^\alpha u(x,t)=0$ uniformly  on each interval
  $[0,T]$, $T>0$.   
  \end{lemma}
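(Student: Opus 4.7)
The strategy is to combine the uniform $L^1$-bound with the high Sobolev regularity and the radial structure to obtain pointwise decay of $u$ at a rate strictly faster than $|x|^{-\alpha}$. Heuristically, a nonnegative radial function in $L^1(\R^d)$ decays, in an averaged radial sense, like $r^{-(d-1)}$; since $\alpha<1\le d-1$ this averaged decay is sharper than the target $r^{-\alpha}$, and the Sobolev regularity upgrades the averaged information into a pointwise bound via a one-dimensional Gagliardo--Nirenberg interpolation on a dyadic radial interval $[R,2R]$.

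Writing $u(x,t)=u(r,t)$ with $r=|x|$, the radial $L^1$-bound $\sigma_d\int_0^\infty u(r,t) r^{d-1}\dr\le M$ gives
\[
\int_R^{2R} u(r,t)\dr \;\le\; R^{1-d}M/\sigma_d,
\]
while the hypothesis $\|\p^\beta u(t)\|_p\le C(t)$ for $|\beta|\le n$ controls, by expressing the radial derivative $\p_r^m$ as a combination of Cartesian derivatives of order at most $m\le n$ with coefficients bounded on $[R,2R]$ (for $R\ge 1$),
\[
\|\p_r^m u(\cdot,t)\|_{L^p([R,2R])} \;\le\; R^{-(d-1)/p}\,C(t).
\]
Next, the scale-invariant one-dimensional Gagliardo--Nirenberg inequality on $[R,2R]$,
\[
\|u(\cdot,t)\|_{L^\infty([R,2R])}\le C_0\,\|\p_r^m u(\cdot,t)\|_{L^p([R,2R])}^{a}\,\|u(\cdot,t)\|_{L^1([R,2R])}^{1-a}, \quad a=\frac{1}{m+1-1/p},
\]
has constant $C_0$ genuinely independent of $R$ thanks to the algebraic identity $a(m+1-1/p)=1$. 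Substituting the two displays above and multiplying through by $R^\alpha$ yields
\[
\sup_{R\le|x|\le 2R}|x|^\alpha u(x,t) \;\le\; 2^\alpha C_0\,C(t)^a M^{1-a}\,R^{\alpha-(d-1)(1-a(1-1/p))}.
\]

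Taking $m$ (and hence the fixed $n$) large enough renders $a=1/(m+1-1/p)$ arbitrarily small; since $\alpha<1\le d-1$, the exponent $\alpha-(d-1)(1-a(1-1/p))$ becomes strictly negative, so the right-hand side tends to $0$ as $R\to\infty$. Uniformity for $t\in[0,T]$ follows at once from uniform boundedness of $C(t)$ on compact time intervals, which is precisely the lemma's hypothesis. The main technical obstacle is the careful bookkeeping of scales: one must check that $C_0$ is honestly $R$-independent in the one-dimensional Gagliardo--Nirenberg inequality (this is the role played by $a(m+1-1/p)=1$) and that the radial derivative $\p_r^m u$ is dominated in $L^p$ by the full Sobolev norm of $u$ on $[R,2R]$, an elementary but tedious chain-rule computation in polar coordinates that forces the fixed $n$ to be chosen sufficiently large from the outset.
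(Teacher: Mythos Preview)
Your argument is correct and takes a genuinely different route from the paper's proof. One minor technical point: the homogeneous Gagliardo--Nirenberg inequality $\|f\|_\infty \le C_0\|f^{(m)}\|_p^a\|f\|_1^{1-a}$ fails on a bounded interval (take $f\equiv 1$); the correct version on $[R,2R]$ carries an additive lower-order term $C_0 R^{-1}\|f\|_{L^1([R,2R])}$. In your setting this extra term contributes at most $C M R^{-d}$, which after multiplication by $R^\alpha$ still vanishes as $R\to\infty$, so the conclusion is unaffected. Your control of $\|\p_r^m u\|_{L^p([R,2R])}$ is justified by the observation that for radial $u$ the rotation-invariant quantity $|D^m u|^2=\sum_{|\beta|=m}\frac{m!}{\beta!}|\p^\beta u|^2$ dominates $|U^{(m)}(r)|^2$ pointwise, so that $\int_R^{2R}|U^{(m)}|^p\,r^{d-1}\dr\le C\|u\|_{W^{m,p}}^p$.

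The paper proceeds quite differently. It introduces an annular moment $\Lambda_R(t)=\int\Psi_R(x)u(x,t)\dx$ with $\Psi$ a smooth bump supported in $\{1\le|x|\le 2\}$, and uses the equation itself---computations parallel to those in the blowup section---to bound $\big|\frac{\rm d}{\dt}\Lambda_R\big|\le C(M)R^{-\alpha}$, whence $\Lambda_R(t)\le C(M,t)R^{-\alpha}$. Radial symmetry converts this into a unit-ball estimate $\int_{\{|x-x_0|\le 1\}}u\le C R^{-(\alpha+d-1)}$ for $|x_0|=R$, and a Taylor-type argument using only $C^2$ bounds (from Sobolev embedding) extracts the pointwise decay, the final step requiring precisely $\alpha<2(d-1)/d$. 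Your approach is more elementary and entirely ``static'': it uses neither the PDE dynamics nor the smallness of $\eps$, only $L^1$ conservation and Sobolev control. The paper's route, by contrast, exploits the structure of the equation and ties the lemma to the moment machinery developed for the blowup criterion; it requires less regularity (only $C^2$ rather than $W^{m,p}$ with $m$ large) but leans on the nonlinear evolution to gain the extra factor $R^{-\alpha}$ in the local mass.
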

  
\begin{proof} 
The estimate \rf{Sob1} is, in fact, satisfied for sufficiently smooth solutions, {\em e.g.} those constructed either  in this Section \ref{smooth-sol}  or in \cite{LRZ}. 
Let $\Psi\ge 0$ be a smooth bump function supported on an annulus:  ${\rm supp\,}\Psi\subset\{1\le |x|\le 2\}$, and its scaling $\Psi_R(x)=\Psi\left(\frac{x}{R}\right)$, $R>0$. Define the moment of $u$ by 
\be
\Lambda_R(t)=\int\Psi_R(x)u(x,t)\dx. \label{lambda}
\ee
Computations similar to those in the proof of Theorem \ref{blow} in Section \ref{bl}  lead to the bound
$$
\left|\frac{\rm d}{\dt}\Lambda_R(t)\right|\le \frac{C(M)}{R^\alpha}.
$$  
This, in turn, gives 
$$
\Lambda_R(t)\le \frac{C(M,t)}{R^\alpha}, 
$$ 
which by radial symmetry  implies that 
\be
\int_{\{|x-x_0|\le 1\}}u(x,t)\dx\le \frac{C_1}{R^{\alpha+d-1}},\label{war-1}
\ee
when  $|x_0|=R>0$. Indeed, the spherical shell of radius $\approx R$ and of width $\approx 1$ contains $\approx R^{d-1}$ unit balls. 

On the other hand, 
\be
\int_{\{|x-x_0|\le 1\}}\left(\left|\partial^\beta u(t)\right|^p+u(t)^p\right)\dx \le C_2\label{war-2}
\ee 
for $|\beta|\le n$ with a sufficiently big $n$. 
The condition \rf{war-2} implies now that 
\be
\sup_{\{|x-x_0|\le1\}}\left(|\nabla u(x,t)|+|\partial_{x_i}\partial_{x_k} u(x,t)|\right)\le C_3.\label{pochodne}
\ee
Next, we consider the truncation $\chi (x-x_0)u(x,t)$ where $\chi\ge 0$ has its support in the unit ball. If for some $x_1$ with $|x_1-x_0|\le 1$ \ \ 
$$\max_{x\in\R^d}\chi(x-x_0)u(x,t)=\chi(x_1-x_0)u(x_1,t),$$
then, denoting again by $u$ the function $\chi u$, from inequalities \rf{war-1} and \rf{pochodne} we obtain 
$$
|u(x,t)-u(x_1,t)|\le C|x-x_1|^2\le \frac12 u(x_1,t).
$$
Indeed, if $u(x_1,t)\ge \frac{3}{C}$, then 
$$
\frac12 u(x_1,t)\le \int_{\{|x-x_1|\le 1\}}u(x,t)\dx\le \frac{C}{R^{d-1+\alpha}},
$$ and we are done.
Otherwise, if $u(x_1,t)< \frac{3}{C}$, then 
$$
C_1u(x_1,t)^{d/2+1}\le  \int_{\{|x-x_1|\le 1\}}u(x,t)\dx\le \frac{C}{R^{d-1+\alpha}}.
$$
In both the cases, for $\alpha<1$ we get the conclusion since the inequality $\frac{d-1+\alpha}{\frac{d}{2}+1}\ge \alpha$ is satisfied for $\alpha\le 2\frac{d-1}{d}$. 
\end{proof}

\begin{proof}[Proof of Theorem  \ref{mainglobth}.]
The proof of this  theorem is a standard application of   Lemma \ref{let}, Lemma  \ref{apr} and the pointwise  comparison principle in Theorem \ref{g1}. 

Let us fix $\alpha<1$, $n=2p\ge d+1$, $p\in{\mathbb N}$, $\e<1$. 
By Lemma \ref{let} there exists  $T_0>0$ such that the system \rf{equ}--\rf{eqv} has a solution $u\in {\mathcal X}_{T_0}$, with the space ${\mathcal X}_{T_0}$ defined in \rf{space-X}. 
We will show that this solution can be continued onto the interval $[0,T_1]$, to $u\in {\mathcal X}_{T_1}$, with $T_1-T_0\ge \Delta(A,[T_0],\e)>0$. 
First, observe that by assumptions of Theorem \ref{mainglobth}  and property \rf{space-X}, there exist $K$, $N$ and $\g<\alpha$ such that $0\le u_0(x)< \min\left\{N,\frac{K}{|x|^\gamma},\frac{\eps s}{|x|^\alpha}\right\} $, so that by Lemma \ref{x-decay},   condition \rf{decay_inf} of  Theorem \ref{g1} is satisfied. Consequently the estimate
$$
u(x,t)< \min\left\{N,\frac{K}{|x|^\gamma},\frac{\eps s}{|x|^\alpha}\right\}
$$
holds for each $t\in[0,T_0]$. 
In particular, by Lemma \ref{apr} we infer that $\|u(t)\|_{W^{4,n}}\le H(A,[T_0]+2,\e)$. 
Take  $T'<T_0$, close to $T_0$. By Lemma \ref{let}, the solution $v$ with the initial condition $v(0)=u(T')$ exists on an interval of length (at least) $\Delta=\Delta(A,[T_0],\e)$.
Therefore, the solution of the original Cauchy problem can be continued onto $\left[0,T_0+\frac12\Delta\right]$, which shows the claim. 
\end{proof}  

 \section{Unique global-in-time solutions for $\alpha \in (1,2)$} \label{smooth-sol-} 
 
 In this section, we prove Theorem  \ref{2globth} by constructing global-in-time solutions in  the homogeneous Morrey   space $ M^p(\R^d)$.
 Let us begin with   auxiliary estimates. 

\begin{proposition}\label{prop:Mor} 
There exists a constant $c(d)\in (0,1)$ such that for each nonnegative and  radially symmetric $v\in M^{d/\kappa}(\R^d)$ with $\kappa\in[1,d)$ we have the inequality 
$$
c(d) \mn v\mn_{M^{d/\kappa}}\leq  \sup_{R>0}R^{\kappa-d}\int_{\{|y|<R\}}v(y)\dy
\leq  \mn v\mn_{M^{d/\kappa}}.
$$ 
\end{proposition}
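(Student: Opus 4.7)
The right-hand inequality is immediate: taking the sphere centered at $y=0$ in the definition
$$\mn v\mn_{M^{d/\kappa}}=\sup_{R>0,\,y\in\R^d}R^{\kappa-d}\int_{\{|x-y|<R\}}v(x)\dx$$
shows that $\sup_{R>0}R^{\kappa-d}\int_{\{|x|<R\}}v\dx\le\mn v\mn_{M^{d/\kappa}}$. So the substance of the proposition lies in the reverse bound. Denote by $K$ the centered supremum on the right-hand side; the plan is to fix arbitrary $y\in\R^d$ and $R>0$ and to bound $R^{\kappa-d}\int_{B(y,R)}v\dx$ by a dimensional constant times $K$, splitting into two regimes.

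The easy regime is $|y|\le 2R$: then $B(y,R)\subset B(0,3R)$, and since $v\ge 0$,
$$R^{\kappa-d}\int_{B(y,R)}v\dx\le 3^{d-\kappa}(3R)^{\kappa-d}\int_{B(0,3R)}v\dx\le 3^{d}K.$$
The interesting regime is $\rho:=|y|>2R$. Here I would exploit the radial symmetry of $v$ by picking a maximal set of points $y_1,\dots,y_N$ on the sphere $\{|x|=\rho\}$ with pairwise distances $\ge 2R$; by a standard packing argument on spheres (dividing a ball of radius $2R$ into caps of radius $R$), one has $N\ge c(d)(\rho/R)^{d-1}$. Since $R<\rho/2$, the balls $B(y_i,R)$ are pairwise disjoint and all contained in $B(0,2\rho)$. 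Radial symmetry gives $\int_{B(y_i,R)}v\dx=\int_{B(y,R)}v\dx$ for every $i$, hence
$$N\int_{B(y,R)}v\dx=\sum_{i=1}^N\int_{B(y_i,R)}v\dx\le\int_{B(0,2\rho)}v\dx\le K(2\rho)^{d-\kappa}.$$

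Dividing by $N$ and multiplying by $R^{\kappa-d}$ yields
$$R^{\kappa-d}\int_{B(y,R)}v\dx\le C(d)K\,R^{\kappa-1}\rho^{1-\kappa}=C(d)K\left(\frac{R}{\rho}\right)^{\kappa-1}\le C(d)K,$$
where the last step is precisely where the standing assumption $\kappa\ge 1$ enters: it turns $(R/\rho)^{\kappa-1}$ into a bounded factor (in fact $\le 1$) since $R/\rho<1/2$. Combining the two regimes gives $\mn v\mn_{M^{d/\kappa}}\le C(d)K$, which is the sought inequality with $c(d)=1/C(d)$.

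The only delicate point is the spherical packing lower bound $N\ge c(d)(\rho/R)^{d-1}$, which is a standard consequence of comparing $(d-1)$-dimensional Hausdorff measure on $S_\rho$ with spherical cap areas; it is dimensional and uniform in $\kappa$, so the final constant $c(d)$ is indeed independent of $\kappa\in[1,d)$. It is worth noting that the exponents in the final estimate balance exactly at $\kappa=1$, which reflects the borderline (endpoint) nature of this scale of Morrey spaces for the argument.
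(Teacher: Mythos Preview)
Your proof is correct and follows essentially the same strategy as the paper: a sphere-packing argument exploiting radial symmetry to compare an off-center ball $B(y,R)$ with a centered ball, the condition $\kappa\ge 1$ entering at exactly the same point to control the factor $(R/\rho)^{\kappa-1}$. The only cosmetic differences are that you handle the near-center case $|y|\le 2R$ explicitly (the paper leaves it implicit) and your variable names are swapped relative to the paper's.
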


\begin{proof} 
The second inequality results immediately from the definition of the norm in $M^{d/\kappa}(\R^d)$, see \rf{hMor}.
For the proof of the first inequality, we fix $x_0\in\R^d\setminus\{0\}$ and $\r\in(0,R)$ where $R=|x_0|$. 
By comparison of the volumes, one can prove that the spherical shell $\{ R-\varrho\le |x|\le R+\varrho\}$ contains at least $C(d)\left(\frac{R}{\r}\right)^{d-1}$ disjoint balls of radius $\varrho$, where $C(d)>0$ is a number depending on the dimension $d$ only.
Thus, by the radial symmetry of $v$, we obtain 
$$
\int_{\{|x-x_0|<\varrho\}}v(x)\dx\le c(d)\left(\frac{\varrho}{R}\right)^{d-1}\int_{\{|x|<R\}}v(x)\dx 
$$ 
with another constant $c(d)>0$. 
Consequently,  since $\kappa\in[1,d)$ and $0<\r<R$, we have the estimate 
$$
\sup_{0<\varrho\le R}\varrho^{\kappa-d}\int_{\{|x-x_0|<\varrho\}}v(x)\dx\le c(d)R^{\kappa-d}\int_{\{|x|<R\}}v(x)\dx
$$
for each $x_0\in\R^d$. 
Computing the upper bound with respect to $x_0\in\R^d$ and $R>0$, we complete the proof of the first inequality. 
 \end{proof}
  

 Since we assume that $\alpha\in (1,2)$, we do not need to construct local-in-time solutions via the regularized problem  \rf{reg-reg}--\rf{ini-reg}. 
We present here a standard construction,   {\em cf. e.g.} \cite{B-SM,K-JMAA,Lem} for related computations, which work in the  subcritical case  $\alpha\in(1,2)$ in suitable  Morrey spaces.  
Here, by a  solution, we understand  the  {\em mild} solution of problem \rf{equ}--\rf{ini} which  satisfies the Duhamel formula 
\be
u(t)=T_\alpha(t)u_0+  B(u,u)(t),\label{Duh}
\ee
with the bilinear form 
\be
B(u,w)(t)=\int_0^t \nabla T_\alpha(t-s)(u\nabla E_d*w)(s)\ds.\label{B-form}
\ee
Here 
\be
T_\alpha(t)={\rm e}^{-t(-\Delta)^{\alpha/2}}\label{semi}
\ee
 denotes the semigroup generated by the fractional Laplacian  $-(-\Delta)^{\alpha/2}$ on $\R^d$, and $P_t$ is its integral kernel: $T_\alpha(t)u_0 =P_t\ast u_0$. 
 The function $P_t$ is  of selfsimilar form 
\be
P_t(x)=t^{-d/\alpha}P\left(\frac{x}{t^{1/\alpha}}\right)\ge 0.\label{P}
\ee
It is well known (see {\em e.g.} \cite[Ex. 3.9.17]{J1}) that for $\alpha<2$ the function $P$ has an algebraic decay at infinity 
$$ 0\le P(y)\le C(1+|y|^{d+\alpha})^{-1}$$ 
and $\|P_t\|_1=1$. 
We also need estimates for  $\nabla P_t$  which has the form (see {\em e.g.} \cite{BJ})
\be
\nabla P_t(x)=t^{-(d+1)/\alpha}G\left(\frac{x}{t^{1/\alpha}}\right)\label{G}
\ee
for some smooth function $G$, and satisfies the relations 
\be
\|\nabla P_t\|_1=Ct^{-1/\alpha},\label{L1-G}
\ee
\bea
|\nabla P_t(x)|&\le& Ct^{-(d+1)/\alpha},\label{G-inf}\\
|\nabla P_t(x)|&\le& Ct^{1/\alpha-1}|x|^{-(d+\alpha+1)}.\label{G-point}
\eea  
In the sequel, we will use the estimates for the semigroup $T_\alpha(t)$ and for its gradient  acting in the Morrey spaces similar to those for the action in the Lebesgue spaces. 
These are analogous to the estimates for the heat semigroup for $\alpha=2$ in \cite[Prop. 3.2]{G-M}, \cite[Th. 3.8, (3.71)--(3.75), (4.18)]{Tay} recalled in \cite[(13)--(14)]{B-SM}, and can be, {\em e.g.},  obtained using inequalities \rf{L1-G}--\rf{G-point}. 

For $1\leq p\leq p_2\leq \infty$  
\be
\mn T_\alpha(t)f\mn_{M^{p_2}}\le Ct^{-d(1/p-1/p_2)/\alpha}\mn f\mn_{M^p}\label{polgrupa}
\ee
holds.
Moreover,  for $1\le p_1\leq p_2\le\infty$,  the estimate for the gradient of the semigroup reads 
\be
\mn \nabla T_\alpha(t)f\mn_{M^{p_2}}\le Ct^{-1/\alpha-d(1/p_1-1/p_2)/\alpha}\mn f\mn_{M^{p_1}}.\label{grad-polgrupy}
\ee
We also recall from \cite[Prop. 3.1]{G-M} a version of estimates of the Riesz potential in the Morrey norms. For $E_d$ being a fundamental solution of 
$-\Delta$ in $\R^d$ with $d\geq 2$, we have the estimates
\begin{equation}\label{Sob0}
\mn \nabla E_d* u\mn_{M^r} \leq C\mn u\mn_{M^p}\quad \text{with} \quad 
\frac{1}{r} =\frac{1}{p}-\frac{1}{d}
\end{equation}
as well as 
\be
\| \nabla E_d* u\|_\infty\le C\mn u\mn_{M^{p}}^\mu\mn u\mn_{M^r}^\nu\label{Sob}
\ee
with $1\le p<d<r$ and $\mu=\frac{\frac1d-\frac1r}{\frac1p-\frac1r}$, $\nu=\frac{\frac1p-\frac1d}{\frac1p-\frac1r}$ so that $\mu+\nu=1$. Below, we shall only use  the following particular version of inequality \eqref{Sob}:
\be
\| \nabla E_d* u\|_\infty\le C\mn u\mn_{M^{d/\alpha}}^{1/\alpha}\| u\|_\infty^{1-1/\alpha}\label{Sob1}
\ee

We are ready to formulate and prove a local-in-time existence result which is valid even without radial symmetry assumption on $u_0$.

\begin{proposition} \label{lok-istn}
Given $u_0\in M^{d/\alpha}(\R^d)\cap L^\infty (\R^d)$ with $d\ge 2$, there exist $T=T(u_0)>0$ and a unique local-in-time  mild solution 
$$
u\in {\mathcal X}_T\equiv{\mathcal C}([0,T], M^{d/\alpha}(\R^d)\cap L^\infty (\R^d))
$$ 
of problem \rf{equ}--\rf{eqv} with $\alpha\in (1,2]$.
\end{proposition}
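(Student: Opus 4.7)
The plan is to construct $u$ by the Banach fixed point theorem applied to the Duhamel formula \rf{Duh} in a suitable closed ball of $\mathcal{X}_T$, equipped with the norm
$$\|u\|_{\mathcal{X}_T} = \sup_{0\le t\le T}\Bigl(\mn u(t)\mn_{M^{d/\alpha}} + \|u(t)\|_\infty\Bigr).$$
First I would handle the linear part. The uniform bound $\|T_\alpha(t)u_0\|_\infty \le \|u_0\|_\infty$ follows from $\|P_t\|_1 = 1$, while \rf{polgrupa} applied with $p = p_2 = d/\alpha$ gives $\mn T_\alpha(t)u_0\mn_{M^{d/\alpha}} \le C\mn u_0\mn_{M^{d/\alpha}}$. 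Thus the free evolution belongs to a fixed ball of $\mathcal{X}_T$ for every $T > 0$.

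The heart of the argument is the bilinear estimate for $B(u,w)$ defined in \rf{B-form}. For the $L^\infty$ part, I would use \rf{L1-G} together with \rf{Sob1} to estimate
\begin{equation*}
\|B(u,w)(t)\|_\infty \le \int_0^t \|\nabla P_{t-s}\|_1 \, \|u(s)\|_\infty \, \|\nabla E_d * w(s)\|_\infty \ds
\le C\int_0^t (t-s)^{-1/\alpha}\, \mathcal{Q}(s)\ds,
\end{equation*}
where $\mathcal{Q}(s) = \|u(s)\|_\infty \mn w(s)\mn_{M^{d/\alpha}}^{1/\alpha}\|w(s)\|_\infty^{1-1/\alpha}$. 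Since $\alpha > 1$, the kernel $(t-s)^{-1/\alpha}$ is integrable and the bound $C T^{1-1/\alpha}\|u\|_{\mathcal{X}_T}\|w\|_{\mathcal{X}_T}$ follows. For the Morrey part, applying \rf{grad-polgrupy} with $p_1 = p_2 = d/\alpha$, then using $\mn u \cdot \nabla E_d*w \mn_{M^{d/\alpha}} \le \|\nabla E_d * w\|_\infty \mn u\mn_{M^{d/\alpha}}$ and again \rf{Sob1}, yields an estimate of the same form with the same integrable singularity, and hence
$$\|B(u,w)\|_{\mathcal{X}_T} \le C\, T^{1-1/\alpha}\, \|u\|_{\mathcal{X}_T}\,\|w\|_{\mathcal{X}_T}.$$

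With this bilinear estimate in hand, I would close the argument in the classical Kato--Fujita style: choosing $T = T(u_0) > 0$ small enough that $4C T^{1-1/\alpha} \|T_\alpha(\cdot)u_0\|_{\mathcal{X}_T} < 1$, the nonlinear map $u \mapsto T_\alpha(t)u_0 + B(u,u)(t)$ is a strict contraction on the closed ball of radius $2\|T_\alpha(\cdot)u_0\|_{\mathcal{X}_T}$ in $\mathcal{X}_T$, giving existence and uniqueness of the mild solution. The continuity in $t$ (in the $L^\infty$ component) of $T_\alpha(t)u_0$ and of $B(u,u)(t)$ is inherited from dominated convergence using \rf{L1-G} and \rf{Sob1}; the Morrey-valued continuity is understood in the weak-$*$ sense natural for this non-separable space. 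The main technical obstacle is simply verifying the bilinear estimate with the correct positive power of $T$: the condition $\alpha > 1$ is used in a single but crucial place, to guarantee $(t-s)^{-1/\alpha}$ is integrable near $s = t$, which is precisely why the method breaks down at the critical value $\alpha = 1$ and a more delicate approach (as in Section \ref{smooth-sol}) is required there.
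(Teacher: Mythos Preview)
Your proof is correct and follows essentially the same route as the paper's own argument: the same norm on $\mathcal{X}_T$, the same linear bound via \rf{polgrupa}, and the same bilinear estimate obtained by combining \rf{grad-polgrupy} (or equivalently \rf{L1-G} for the $L^\infty$ part) with the interpolation inequality \rf{Sob1}, yielding the factor $T^{1-1/\alpha}$ that makes the contraction work for $\alpha>1$. Your additional remarks on the precise contraction radius, on the weak-$*$ interpretation of continuity in the Morrey component, and on why the argument fails at $\alpha=1$ are welcome clarifications of points the paper leaves implicit.
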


\begin{proof}
We supplement the space ${\mathcal X}_T$ with the usual norm
$$
\|u\|_{{\mathcal X}_T}\equiv \sup_{t\in[0,T]}\mn u(t)\mn_{M^{d/\alpha}}+
\sup_{t\in[0,T]}\| u(t)\|_{\infty}
$$ 
and we shall  find the solution of equation \eqref{Duh}--\rf{B-form} by the Banach fixed point theorem. First, we note that by inequalities \rf{polgrupa}, we have got
$
\|T_\alpha(\cdot )u_0\|_{\mathcal{X}_T}\leq C \|u_0\|_{\mathcal{X}_T}
$
with a constant independent of $T$ and of $u_0$. 
Next, we estimate the bilinear form \rf{B-form} in the norm of the space 
${\mathcal X}_T$. For all $u,w\in {\mathcal X}_T$, by inequalities 
\rf{grad-polgrupy} and \rf{Sob1}, we obtain the estimates
\begin{equation}
\begin{split}
\mn B(u,w)\mn_{M^{d/\alpha}}&
\le C\int_0^t(t-s)^{-1/\alpha}\mn u(s)\mn_{M^{d/\alpha}}\|\nabla E_d*w(s)\|_\infty\ds\nonumber\\
&\le C\int_0^t(t-s)^{-1/\alpha}\mn u(s)\mn_{M^{d/\alpha}}\mn w(s)\mn_{M^{d/\alpha}}^{1/\alpha}\| w(s)\|_{\infty}^{1-1/\alpha}\ds\nonumber\\
&\le CT^{1-1/\alpha} \|u\|_{\mathcal{X}_T}\|w\|_{\mathcal{X}_T}
\end{split}
\end{equation}
and, analogously,
\begin{equation}
\begin{split}
\| B(u,w)\|_{\infty}&
\le C\int_0^t(t-s)^{-1/\alpha}\| u(s)\|_{\infty}\|\nabla E_d*w(s)\|_\infty\ds\nonumber\\
&\le C\int_0^t(t-s)^{-1/\alpha}\| u(s)\|_{\infty}\mn w(s)\mn_{M^{d/\alpha}}^{1/\alpha}\| w(s)\|_{\infty}^{1-1/\alpha}\ds\nonumber\\
&\le CT^{1-1/\alpha} \|u\|_{\mathcal{X}_T}\|w\|_{\mathcal{X}_T}.
\end{split}
\end{equation}
The usual reasoning (see {\it e.g.}~\cite{B-SM,K-JMAA,Lem}) completes the 
construction of a unique solution in the space $\mathcal{X}_T$ for sufficiently small $T$ depending on $u_0$.
\end{proof}

\begin{proof}[Proof of Theorem  \ref{2globth}]
A local-in-time solution is constructed in Proposition \ref{lok-istn}. 
Since $\alpha>1$, this solution is sufficiently regular ({\em e.g.} $u\in{\mathcal C}^2(\R^d\times[0,T))$) which can be proved repeating the reasoning from \cite{DGV}.
This solution is radial and nonnegative  if the corresponding initial datum is so, by a usual comparison argument. 
To prove that this local-in-time solution can be extended to all $t>0$, it suffices to show that neither 
$\mn u(t)\mn_{M^{d/\alpha}}$ nor $\|u(t)\|_\infty$ can blowup in a finite time.

By assumptions Theorem \ref{2globth} and by Remark \ref{B},
 there exist constants $K>0$ and $\g\in(0,\alpha)$ such that 
$$
\int_{\{|x|<R\}}u_0(x)\dx<\min\left\{ KR^{d-\g},\eps \frac{\s_d}{d-\alpha}s(\alpha,d)R^{d-\alpha}\right\}\ \ {\rm for\ all\ } R>0,
$$
Then, applying  Proposition \ref{prop:Mor}, one can  immediately check that  $u_0\in M^p(\R^d)$ with  $p=d/\g>d/\alpha$.
Thus, by the comparison principle proved in Theorem \ref{g2} combined with Proposition \ref{prop:Mor}, there exists a number $C$ independent of $T$ such that $|\!\!| u(t)|\!\!|_{M^{d/\alpha}}\le C$ and $|\!\!| u(t)|\!\!|_{M^p}\le C$ for all $t\in[0,T]$.

Next, we  estimate the $L^\infty$-norm of both sides of equation \rf{Duh} 
using inequalities \rf{grad-polgrupy} and \rf{Sob0} with $1/r=1/p-1/d$ 
in the following way 
\begin{equation*} 
\begin{split}
\|u(t)\|_\infty&\leq \|T_\alpha(t)u_0\|_\infty
+C \int_0^t (t-s)^{-\frac{1}{\alpha}-\frac{d}{\alpha r}}\|u(s)\|_\infty \mn\nabla E_d*u(s)\mn_{M^r}\ds\\
&\leq \|u_0\|_\infty + C
\int_0^t (t-s)^{-\frac{1}{\alpha}-\frac{d}{\alpha}
\left(\frac{1}{p}-\frac{1}{d}\right)}
\|u(s)\|_\infty \mn u(s)\mn_{M^p}\ds.
\end{split}
\end{equation*}
Thus,  the $L^\infty$-norm of the solution  is controlled locally in time thanks to a singular Gronwall type argument ({\em cf.} \cite[1.2.1, 7.1.1]{H}), because
$$
-\frac{1}{\alpha}-\frac{d}{\alpha}
\left(\frac{1}{p}-\frac{1}{d}\right)\in (-1,0)
\quad \text{for} \quad p>\frac{d}{\alpha}
$$
and because 
$\sup_{s>0}  \mn u(s)\mn_{M^p}<\infty$ by Theorem \ref{g2} combined with Proposition \ref{prop:Mor}.
\end{proof}

\section{Blowup of radially symmetric solutions } \label{bl}

In this section we prove Theorem \ref{blow} using the method of truncated moments which is reminiscent of that in the papers  \cite{N1,K-O,BKZ} for $\alpha=2$ and in our recent papers  \cite{BKZ,BKZ-NHM,BCKZ}, adjusted to the case $\alpha<2$.  
First, we define a continuous bump function $\psi$ and its rescalings for $R>0$ 
\be
\psi(x)=(1-|x|^2)_+^{1+\alpha/2}
=\left\{
\begin{array}{ccc}
(1-|x|^2)^{1+\alpha/2}& \text{for}& |x|<1,\\
0& \text{for}& |x|\geq 1,
\end{array}
\right.
\qquad  \psi_R(x)=\psi\bigg(\frac{x}{R}\bigg). \label{bump}
\ee
The function $\psi$ is piecewise ${\mathcal C}^2(\mathbb R^d)$, with its support ${\rm supp}\,\psi=\{|x|\le 1\}$,   and  satisfies 
\be
\nabla\psi(x)=-(\alpha+2)x(1-|x|^2)_+^{\alpha/2}.\label{gradpsi}
\ee 
The action of the fractional powers $(-\Delta)^{\alpha/2}$ of the Laplacian operator on functions like $(1-|x|^2)_+^\kappa$ leads to formulas involving hypergeometric functions. 
In the particular case $\kappa=1+\alpha/2$, it follows from \cite[p. 39]{MOS} 
that this is a linear polynomial in $|x|^2$
\be
(-\Delta)^{\alpha/2}\psi(x)=m_\alpha\left(1-\frac{d+\alpha}{d}|x|^2\right)\ \ \ {\rm on}\ \ \ \ {\{|x|\le 1\}},\label{delta-psi}
\ee 
with the constant 
\be
m_\alpha=2^\alpha\Gamma\left(2+\frac{\alpha}{2}\right)\frac{\Gamma\left(\frac{d+\alpha}{2}\right)}{\Gamma\left(\frac{d}{2}\right)}
\sim 2^{\alpha/2}\left(1+\frac{\alpha}{2}\right)\Gamma\left(1+\frac{\alpha}{2}\right)d^{\alpha/2}\ \ {\rm as\ } d\to\infty.\label{m}
\ee 
 The  relation used to obtain asymptotics of $m_\alpha$ 
\be 
\frac{\Gamma\left(z+a\right)}{\Gamma\left(z+b\right)}\sim z^{a-b}\ \ {\rm as\ } z\to\infty,\label{Gamma}
\ee
follows from the Stirling formula. 
Moreover, 
$$(-\Delta)^{\alpha/2}\psi(x)\le 0, \ \ \  |x|\ge 1,$$
holds similarly as was shown in \cite[Lemma 4.3]{BCKZ}, so that we have the inequality 
\be
(-\Delta)^{\alpha/2}\psi(x)\le \ell_\alpha \psi(x),\label{lapl-psi}
\ee 
with 
\be
\ell_\alpha =\frac{2^{\alpha/2}\Gamma\left(1+\frac{\alpha}{2}\right)}{\left(1+\frac{\alpha}{2}\right)^{\alpha/2}}
   \frac{(d+\alpha)^{1+\alpha/2}}{d} 
 \frac{\Gamma\left(\frac{d+\alpha}{2}\right)}{\Gamma\left(\frac{d}{2}\right)} 
 \sim \frac{\Gamma\left(1+\frac{\alpha}{2}\right)}{\left(1+\frac{\alpha}{2}\right)^{\alpha/2}}d^\alpha\ \ {\rm as\ }d\to\infty.\label{ell}
 \ee
Indeed, $\ell_\alpha$ is the least number such that the inequality  $\ell_\alpha(1-s)^{1+\alpha/2}- m_\alpha\left(1-\frac{d+\alpha}{d}s\right)\ge 0$ holds for each $s\in[0,1]$, the minimum of that expression being attained at $s_0=\frac{d-2}{d+\alpha}\in[0,1)$. 

Now, consider  a ``local  moment'' of the solution $u(.,t)$ defined by 
\be
w_R(t)=\int \psi_R(x)u(x,t)\dx\label{moment}
\ee
with the weight function $\psi$ as in \rf{bump}.  
 The evolution of $w_R$ is determined by 
 \bea 
\frac{{\rm d}}{\dt}w_R(t)&=& -\int \psi_R(x)(-\Delta)^{\alpha/2} u(x,t)\dx+\int u(x,t)\nabla v(x,t)\cdot \nabla\psi_R(x)\dx\nonumber\\
&=& -\int  (-\Delta)^{\alpha/2}\psi_R(x)\,u(x,t)\dx\nonumber\\
&\quad&-(\alpha+2)\int u(x,t)\nabla v(x,t)\cdot xR^{-2}\left(1-\left|\frac{x}{R}\right|^2\right)_+^{\alpha/2}\dx\nonumber\\
&\ge& -m_\alpha R^{-\alpha} \int_{\{|x|\le R\}}\left(1-\frac{d+\alpha}{d}\frac{|x|^2}{R^2}\right)u(x,t)\dx\nonumber\\
&\quad&- (\alpha+2)R^{-2}\int_{\{|x|\le R\}} u(x,t)\big(\nabla v(x,t)\cdot x\big)\left(1-\frac{|x|^2}{R^2}\right)^{\alpha/2}\dx, \nonumber
\eea
the second equality followed from the ``integration by parts'' for  $(-\Delta)^{\alpha/2}$. 
Thus, applying inequality \rf{lapl-psi} and Lemma \ref{potential}, we obtain 
\bea 
\frac{{\rm d}}{\dt}w_R(t)&\ge& 
-\ell_\alpha R^{-\alpha}w_R(t)\label{evol}\\ 
&\quad&+\frac{\alpha+2}{\sigma_d}R^{-d}  \int_{\{|x|\le R\}} \left(1-\frac{|x|^2}{R^2}\right)^{\alpha/2}u(x,t)M(|x|,t)\dx.\nonumber
\eea
 Let us write the terms on the right-hand side of inequality \rf{evol} in the radial variables, explicitly. 
 We have  
\be
w_R(t)=R\int_0^1M'(Rr,t)(1-r^2)^{1+\alpha/2}\dr
= (\alpha+2)\int_0^1M(Rr,t)r(1-r^2)^{\alpha/2}\dr,\label{w-r}
\ee
and likewise after the integration by parts 
\begin{equation}
\begin{split}
R\int_0^1 M'(Rr,t)&M(Rr,t)r^{2-d}(1-r^2)^{\alpha/2}\dr\nonumber\\
&=\frac12\int_0^1 M(Rr,t)^2r^{1-d}(1-r^2)^{\alpha/2-1}((d-2)-(d-2-\alpha)r^2)\dr.\nonumber
\end{split}
\end{equation}
Now, the application of the  Cauchy inequality shows that 
{\setlength\arraycolsep{.5pt}
\bea
w_R(t)^2&\le& 2(\alpha+2)\frac{1}{d-2}\int_0^1r^{1+d}(1-r^2)^{\alpha/2}\dr\nonumber\\   
&\qquad& \times \frac{\alpha+2}{2}\int_0^1M^2r^{1-d}(1-r^2)^{\alpha/2-1}((d-2)-(d-2-\alpha)r^2)\dr.\nonumber
\eea}
Therefore, the inequality 
\begin{equation*}
\begin{split}
\frac{{\rm d}}{\dt}w_R(t)\ge&
 -\ell_\alpha R^{-\alpha}w_R(t)\\
& +\frac{(\alpha+2)R^{-d}}{2\sigma_d}\int_0^1 M(Rr,t)^2r^{1-d}(1-r^2)^{\alpha/2-1}((d-2)-(d-2-\alpha)r^2)\dr\nonumber 
 \end{split}
\end{equation*}
 implies 
\be
\frac{{\rm d}}{\dt}w_R(t) \ge w_R(t)\bigg(-\ell_\alpha R^{-\alpha}
+\frac{R^{-d}}{\sigma_d}C(\alpha,d)w_R(t)\bigg) \label{ww}
\ee
for some constant $C(\alpha,d)>0$. 
  For the computation of $C(\alpha,d)$, we used above the relations 
$$(1-r^2)((d-2)-(d-2-\alpha)r^2)^{-1}\le (d-2)^{-1},$$  and  
$$\int_0^1 r^{1+d}(1-r^2)^{\alpha/2}\dr=\frac12 \int_0^1\tau^{d/2}(1-\tau)^{\alpha/2}\dta=
\frac12 \frac{\Gamma\left(\frac{d}{2}+1\right) 
 \Gamma\left(1+\frac {\alpha}{2}\right)} 
{\Gamma\left(\frac{d+\alpha}{2}+2\right)},$$ the latter following from the definition of the Euler Beta function \rf{beta}.  
Now, if initially 
\be
R^{\alpha-d}w_R(0)>\sigma_d\frac{\ell_\alpha}{C(\alpha,d)},\label{www}
\ee
then $w_R(t) $  strictly increases in time, and $w_R(t)$ blows up in a finite time which is a contradiction if $u(x,t)$ is a global-in-time radially symmetric, nonnegative solution.  

The proof of Theorem \ref{blow} (i) is complete because $\int_{\{|x|<R\}}u_0(x)\dx\ge w_R(0)$ for each $R>0$.

 Next, under condition \rf{www}, inequality \rf{ww} implies that 
$$
\frac{{\rm d}}{\dt}w_Rw_R^{-1}\ge \eta R^{-\alpha}
$$
for some $\eta>0$. Consequently, 
\be
w_R(t)\ge w_R(0)\exp\left(\eta R^{-\alpha} t\right).\label{wwww}
 \ee
 
\noindent
 Under  assumption \rf{(ii)}, there exist a constant $C>0$ and a sequence $R_n\to 0$ such that $w_{R_n}(t)\ge CR_n^{d-\alpha}\exp\left(\eta R_n^{-\alpha}t\right)$. 
 Thus, for any $\nu\in(0,\alpha)$, we obtain $w_{R_n}(t)>M$ for $t\ge T\sim R_n^{\alpha-\nu}$ asymptotically when $R_n\to 0$, which implies that  $u(r,t)$ cannot be defined on any interval $[0,T]$ with some $T>0$.   \qed

 \begin{remark}\label{r3} 
A sufficient condition \rf{www} for blowup can be expressed for $\alpha\ge 1$  in terms of the Morrey norm of $M^{d/\alpha}(\R^d)$, and we estimate that critical quantity sufficient for blowup asymptotically as $d\to\infty$. 
Observe that 
$$
c_{\alpha,d} =
\sigma_d\frac{\ell_\alpha}{C(\alpha,d)}\sim C_\alpha\sigma_d d^{\alpha/2-2}\ \ {\rm \ \ asymptotically \ as \ \ }d\to\infty$$
(by \rf{ell}, \rf{ww}) with 
\be
C_\alpha=2^{2+\alpha/2}\left(1+\frac{\alpha}{2}\right)^{1-\alpha/2}\Gamma\left(1+\frac{\alpha}{2}\right)^2\le 8.\label{const}
\ee 
The {\em radial concentration} of $u_0\ge 0$ appearing in the assumptions of Theorem \ref{blow}
\be
\xn u_0\xn \equiv \sup_{R>0}R^{\alpha-d}\int_{\{|y|<R\}} u_0(y)\dy, 
\label{conc}
\ee  
and the upper bound  of the moments  $\sup_{R>0}R^{\alpha-d}\int\psi_R(y)u_0(y)\dy$ are  equivalent. 
Indeed, since for each locally integrable function $\omega\ge 0$, each  $R>0$ and $s\in(0,1)$ we have   
\bea
\int \psi_R(x)\omega(x)\dx&\ge& \int_{\left\{|x|\le R\sqrt{1-s}\right\}}(1-(1-s))^{1+\alpha/2}\omega(x)\dx\nonumber\\
&=& s^{1+\alpha/2}\int_{\left\{|x|\le R\sqrt{1-s}\right\}}\omega(x)\dx,\nonumber
\eea 
and 
$$\max_{s\in[0,1]}s^{1+\alpha/2}(1-s)^{(d-\alpha)/2}=\left(\frac{\alpha+2}{d+2}\right)^{1+\alpha/2}\left(\frac{d-\alpha}{d+2}\right)^{(d-\alpha)/2}\equiv H_d.$$
Therefore $ \sup_{R>0} R^{\alpha-d}\int \psi_R\,\omega \ge H_d L$ if  $\xn \omega\xn>L$; {\em  i.e.} the upper bound of the moments,  the radial concentration $\xn \omega\xn$ of $\omega$ as well as  the $M^{d/\alpha}(\R^d)$ Morrey norm $\mn \omega\mnp$ for $\alpha\ge 1$ are comparable by Proposition \ref{prop:Mor}.  
 Note that  we have asymptotically 
 \be
 H_d^{-1}\sim  \left(\frac{d{\rm e}}{\alpha+2}\right)^{1+\alpha/2}.\label{H} 
 \ee 
Thus, condition \rf{www}  is satisfied if, {\em e.g.}, 
$$\xn u_0\xn> { C_\alpha}{d^{\alpha/2-2}}\sigma_d H_d^{-1}\sim {\widetilde C_\alpha}d^{\alpha-1}\sigma_d,$$ 
with  $C_\alpha$ as in \rf{const} and where 
\be 
{\widetilde C_\alpha}=2\left(1+\frac{\alpha}{2}\right)^{-\alpha}\Gamma\left(1+\frac{\alpha}{2}\right)^2{\rm e}^{1+\alpha/2}\le 2\left(1+\frac{\alpha}{2}\right)^{-\alpha}{\rm e}^{1+\alpha/2}\label{const-alpha}
\ee 
  --- and this leads to a blowup.  

 Therefore, we established   that  the asymptotic  discrepancy between the critical quantity for the global-in-time existence of solutions $\s_d\frac{s(\alpha,d)}{d-\alpha}$ in Theorem \ref{mainglobth} and the bound on the radial concentration 
 guaranteeing the finite time blowup,  is of order $d^{\alpha/2}$ because of relations \rf{H} and \rf{stala}, {\em i.e.}   $\xn u_C\xn=\frac{\s_d}{d-\alpha}s(\alpha,d) \sim 2^{\alpha/2}\frac{\Gamma(\alpha)}{\Gamma\left(\frac{\alpha}{2}\right)} \s_d d^{\alpha/2-1}$. 
\end{remark}
 

 \end{document}